\documentclass[11pt]{amsart}
\usepackage{amsmath,amssymb,amsthm}
\usepackage[latin1]{inputenc}
\usepackage{mathrsfs}
\usepackage{version, tabularx, multicol}
\usepackage{graphicx,float,psfrag}
\usepackage{stmaryrd}
\usepackage{color,latexsym,amsfonts}



 \headheight=8pt
 \textheight=624pt
 \topmargin=0pt
 \textwidth=16,1cm
 \oddsidemargin=1cm
 \evensidemargin=-.5cm

\newcommand{\reff}[1]{(\ref{#1})}

\theoremstyle{plain}
\newtheorem{theo}{Theorem}[section]
\newtheorem{theo*}{Theorem}
\newtheorem{theoA}{Theorem}

\newtheorem{cor}[theo]{Corollary}
\newtheorem{prop}[theo]{Proposition}
\newtheorem{lem}[theo]{Lemma}
\newtheorem{defi}[theo]{Definition}
\theoremstyle{remark}
\newtheorem{rem}[theo]{Remark}

\newcommand{\ca}{{\mathcal A}}

\newcommand{\cc}{{\mathcal C}}

\newcommand{\ce}{{\mathcal E}}

\newcommand{\cf}{{\mathcal F}}
\newcommand{\cg}{{\mathcal G}}
\newcommand{\ch}{{\mathcal H}}
\newcommand{\ci}{{\mathcal I}}

\newcommand{\cl}{{\mathcal L}}
\newcommand{\cn}{{\mathcal N}}
\newcommand{\cm}{{\mathcal M}}

\newcommand{\cs}{{\mathcal S}}

\newcommand{\cw}{{\mathcal W}}
\newcommand{\cx}{{\mathcal X}}

\newcommand{\cz}{{\mathcal Z}}

\newcommand{\ed}{E_\mathrm{d}}
\newcommand{\eg}{E_\mathrm{g}}
\newcommand{\eed}{e_\mathrm{d}}
\newcommand{\eeg}{e_\mathrm{g}}
\newcommand{\pd}{p_\mathrm{d}}
\newcommand{\pg}{p_\mathrm{g}}

\newcommand{\E}{{\mathbb E}}

\newcommand{\N}{{\mathbb N}}
\renewcommand{\P}{{\mathbb P}}

\newcommand{\R}{{\mathbb R}}

\newcommand{\T}{{\mathbb T}}

\newcommand{\rZ}{{\mathbf Z}}

\newcommand{\bt}{{\mathbf t}}
\newcommand{\bff}{{\mathbf f}}

\newcommand{\ind}{{\bf 1}}

\newcommand{\clo}{{\rm cl}\;}

\newcommand{\Card}{{\rm Card}\;}

\newcommand{\inv}[1]{\mathop{\frac{1}{ #1}}\nolimits}
\newcommand{\expp}[1]{\mathop {\mathrm{e}^{ #1}}}

\newcommand{\sgn}{{\rm sgn}}

\newcommand{\lb}{[\![}
\newcommand{\rb}{]\!]}

\title[Simulation of a genealogical tree]{Exact simulation of
  the genealogical tree for a stationary branching population and
  application to the asymptotics of its total length}

\date{\today}

\author{Romain Abraham}
\address{Romain Abraham,
Institut Denis Poisson,
  Universit\'{e} d'Orl\'{e}ans,
  Universit\'e de Tours,
  CNRS, France}
\email{romain.abraham@univ-orleans.fr}

\author{Jean-Fran\c{c}ois Delmas}
\address{Jean-Fran\c{c}ois Delmas,
 Universit\'{e} Paris-Est, CERMICS (ENPC), France}
\email{delmas@cermics.enpc.fr}

\begin{document}

\subjclass[2010]{60J80,60J85}

\keywords{Stationary branching processes, Real trees, Genealogical trees, Ancestral process, Simulation}

\begin{abstract}
  We consider a model of stationary population with random size given by
  a continuous state branching process with immigration with a quadratic
  branching mechanism.  We give an exact elementary simulation procedure
  of the genealogical tree of  $n$ individuals randomly chosen among the
  extant population at a given time.   Then, we prove the convergence of
  the renormalized total length of this genealogical tree as $n$ goes to
  infinity, see also Pfaffelhuber,  Wakolbinger and Weisshaupt (2011) in
  the context of  a constant size population. The  limit appears already
  in Bi and Delmas (2016) but with a different approximation of the full
  genealogical tree. The proof is based  on the ancestral process of the
  extant population  at a  fixed time  which was  defined by  Aldous and
  Popovic (2005) in the critical case.
\end{abstract}

\maketitle

\section{Introduction}

Continuous state branching (CB)  processes are stochastic processes that
can  be obtained  as the  scaling limits  of sequences  of Galton-Watson
processes when the initial number of individuals tends to infinity. They
hence  can be  seen as  a model  for a  large branching  population. The
genealogical structure of a CB process  can be described by a continuum
random tree introduced  first by Aldous \cite{a:crtI}  for the quadratic
critical  case, see  also  Le  Gall and  Le  Jan \cite{lglj:bplpep}  and
Duquesne and  Le Gall  \cite{dlg:rtlpsbp} for  the general  critical and
sub-critical cases. We shall only consider the quadratic case; it is
characterized by a 
branching mechanism $\psi_\theta$:
\begin{equation}\label{eq:psi}
\psi_\theta(\lambda)=\beta \lambda^ 2+ 2\beta\theta \lambda, \quad
\lambda\in [0, +\infty ),
\end{equation}
where $\beta>0$  and $\theta\in  \R$. The sub-critical  (resp. critical)
case  corresponds  to  $\theta>0$  (resp.   $\theta=0$).  The  parameter
$\beta$  can be  seen as  a time  scaling parameter,  and $\theta$  as a
population size parameter.  

In this  model  the  population  dies  out  a.s.~in  the  critical  and
sub-critical  cases.   In  order  to  model  branching  population  with
stationary size distribution,  which corresponds to what  is observed at
an ecological equilibrium, one can  simply condition a sub-critical or a
critical   CB  to   not  die   out.    This  gives   a  Q-process,   see
Roelly-Coppoleta     and      Rouault     \cite{rcr:pdwcfl},     Lambert
\cite{l:qsdcsbpcne} and Abraham and Delmas \cite{ad:wdlcrtseppnm}, which
can  also  be  viewed  as  a   CB  with  a  specific  immigration.   The
genealogical structure  of the Q-process  in the stationary regime  is a
tree with an  infinite spine.  This infinite spine has  to be removed if
one adopts the immigration point of  view, in this case the genealogical
structure  can be  seen  as  a forest  of  trees.   For $\theta>0$,  let
$Z=(Z_t, t\in  \R)$ be this  Q-process in  the stationary regime,  so that
$Z_t$ is the size  of the population at time $t\in \R$.   The process
$Z$ is a Feller
diffusion (see for example Section 7 in \cite{cd:spsmrcatsbp}), solution
of the SDE:
\begin{equation}
   \label{eq:SDE-Z}
dZ_t=\sqrt{2\beta Z_t}\, dB_t+2\beta(1-\theta Z_t)dt,
\end{equation}
where  $(B_t, t\geq  0)$  is a standard Brownian motion.   See  Chen and  Delmas
\cite{cd:spsmrcatsbp}  for  studies on  this  model  in a  more  general
framework. See Section  \ref{sec:local_times} for other contour processes
associated with the process $Z$. 
Let $A_t$  be the time to the most  recent common ancestor of
the population living  at time $t$, see \reff{eq:def-Ah2}  for a precise
definition.     According    to     \cite{cd:spsmrcatsbp},    we    have
$\E[Z_t]=1/\theta$,  and $\E[A_t]=3/4\beta\theta$,  so that  $\theta$ is
indeed   a   population  size   parameter   and   $\beta$  is   a   time
parameter.\medskip

Aldous  and Popovic  \cite{ap:cbpmb} (see  also Popovic  \cite{p:agcbp})
give a description of the genealogical  tree of the extant population at
a fixed  time using  the so-called  ancestral process  which is  a point
process  representation  of the height of the  branching points of a planar tree  in  a  setting very  close  to
$\theta=0$  in  the  present  model.   We  extend  the  presentation  of
\cite{ap:cbpmb}   to  the   case   $\theta\geq   0$,  which  can  be summarized  as
follows.  The  ancestral  process, see  Definition
\ref{def:anc},           is           a          point      measure
$\ca(du,  d\zeta)=\sum_{i\in \ci}  \delta_{u_i, \zeta_i}(du,d\zeta)$  on
$\R^*\times  (0,+\infty)$, where  $u_i$  represents  the (position of
the) individual $i$
in the extant population   and
$\zeta_i$ its ``age''. (The position $0$ will correspond to the position
of the immortal individual. The order on $\R$ provides a natural order on the
individuals through their positions, which means that we are dealing
with ordered or planar genealogical tree.)
From this ancestral process, we construct informally a genealogical tree
${\mathfrak{T}}(\ca)$ as follows. We view
this process as  a sequence of vertical segments in  $\R^2$, the tops of
the   segments  being   the  $u_i$'s   and  their   lengths  being   the
$\zeta_i$'s.  We add  the half  line $\{0\}\times  (-\infty,0]$ in  this
collection of segments.  We then attach the bottom of  each segment such
that $u_i>0$ (resp. $u_i<0$) to the first longer segment to the left
(resp.~ right) of it. See Figure \ref{fig:ancestral-intro} for an example.
This provides the planar tree
$\mathfrak{T}(\ca)$ associated with the ancestral process $\ca$ (see Proposition
\ref{proof:T(A)} for the definition and properties of this 
locally compact  real tree with a unique semi-infinite branch). 
 To state our result, we decompose the
extant population at time $t$ into two sub-populations so that its size $Z_t$ is distributed as 
$\eg+\ed$, where $\eg$ (resp. $\ed$) is the size of the population
grafted on the left (resp. on the right) of the infinite spine. 
We state the main result of Section \ref{sec:ancestral},  see   Propositions
\ref{prop:ancestral}  and \ref{prop:A=G}. 

\begin{theoA}
   \label{theoA:ancestral}
   Let  $\theta\geq 0$.   Let $\eg,\ed$  be independent  
   exponential random variables  with    mean
   $1/2\theta$, and  with   the  convention   that  $\ed=\eg=+\infty   $  if
   $\theta=0$.  Conditionally  given $(\eg,\ed)$, the  ancestral process
   $\ca(du, d\zeta)$ is a Poisson point measure with intensity:
\[
\ind_{(-\eg,\ed)}(u)\, du\, |c'_\theta(\zeta)|d\zeta, 
\]
with $c_\theta$ given by
\begin{equation}\label{eq:def-c}
\forall h>0,\quad c_\theta(h)=\begin{cases}
\frac{2\theta}{\expp{2\beta\theta h}-1} & \mbox{if }\theta>0,\\
(\beta h)^{-1} & \mbox{if }\theta=0.
\end{cases}
\end{equation}
Furthermore, the tree ${\mathfrak{T}}(\ca)$
is distributed as the genealogical tree of  the extant population at
a fixed time $t\in \R$. 
\end{theoA}

\begin{figure}[H]
\begin{center}
\includegraphics[width=7cm]{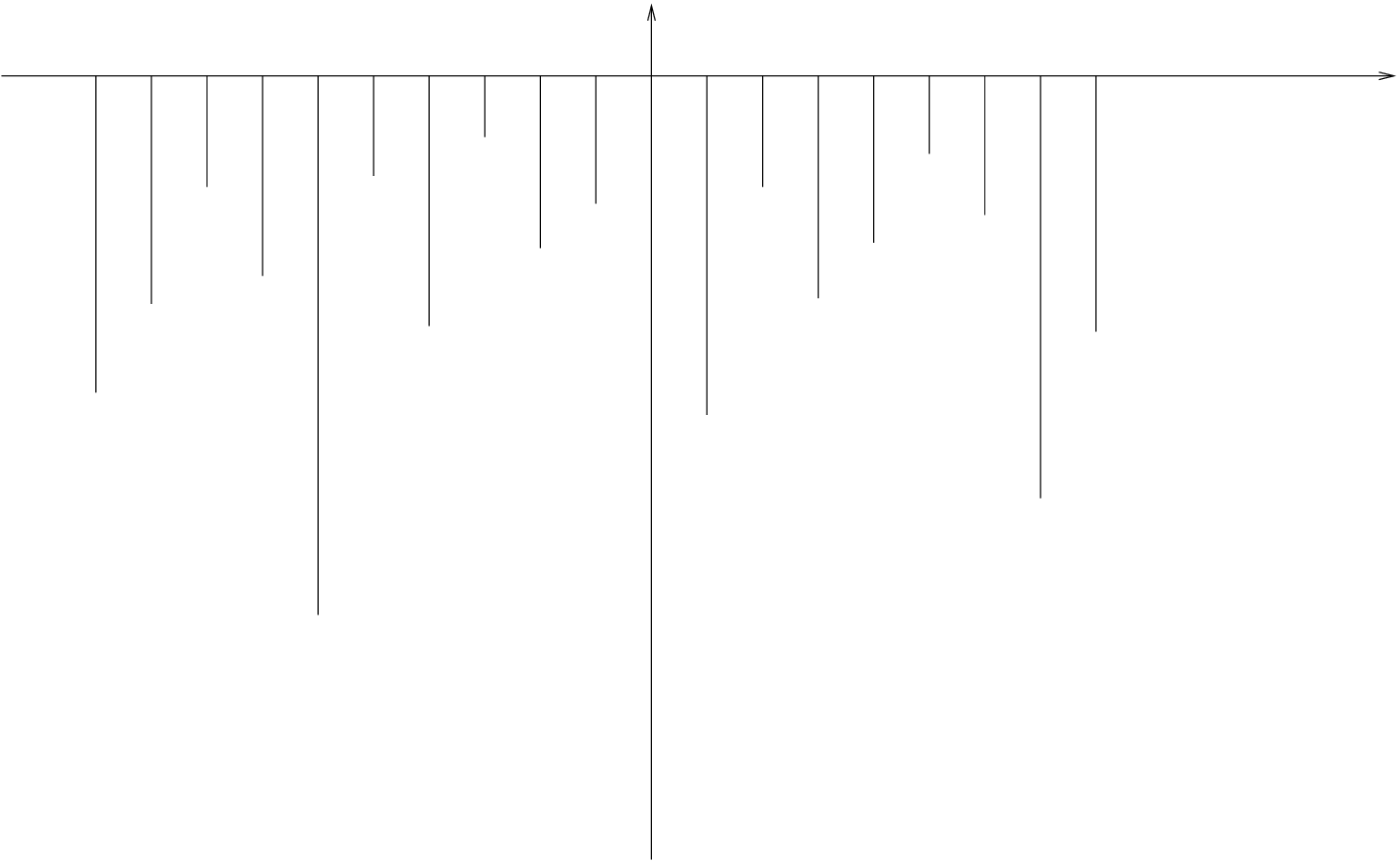}
\includegraphics[width=7cm]{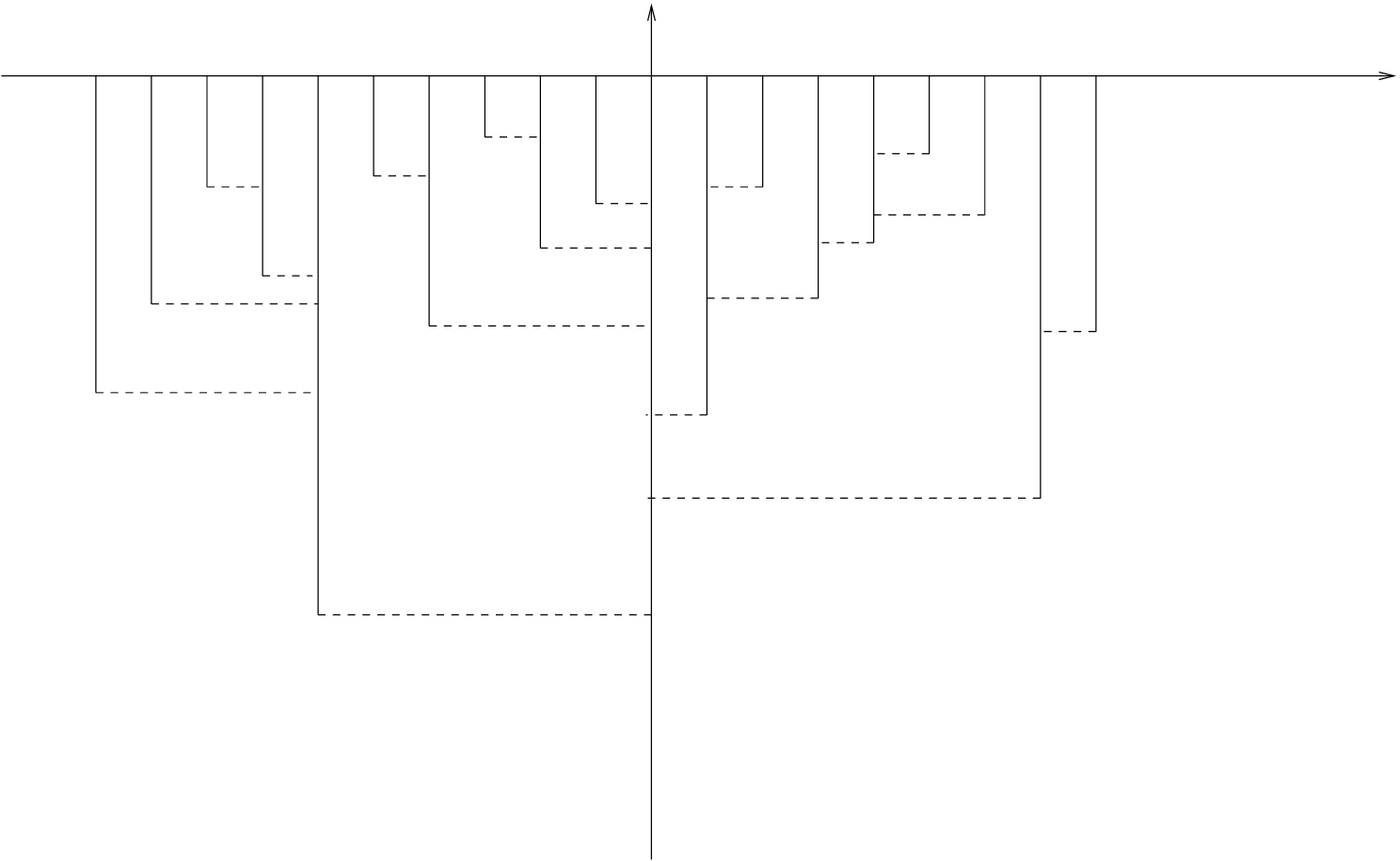}
\caption{An instance of an ancestral process and the corresponding
  genealogical tree}
\label{fig:ancestral-intro}
\end{center}
\end{figure}

The  ancestral  process  description  allows to  give  elementary  exact
simulations of the genealogical tree  of $n$ individuals randomly chosen
in the  extant population at time  0 (or at  some time $t\in \R$  as the
population  has  a stationary  distribution).  We  present here the  static
simulation for fixed $n\geq 2$ given in Subsection \ref{sec:static}, see
also Lemma \ref{lem:arbre-n} there. See
Figures   \ref{fig:sim1}  for  an 
illustration for $n=5$.

\begin{theoA}
\label{theo:simB}
  Let  $n\in  \N^*$. 
\begin{itemize}
\item[(i)] \textbf{Size of the extant population}.  Let $\eg, \ed$  be  independent  exponential random
  variables with mean $1/2\theta$.  ($\eg+ \ed$ corresponds to the size of the extant population.)
\item[(ii)] \textbf{Picking  $n$ individuals  in the extant
    population}. 
Let $(X_k, k\in \{1, \ldots, n\})$
  be, conditionally on $(\eg, \ed)$, 
  independent uniform random variables on $[-\eg, \ed]$ and set
  $X_0=0$. (The individual $0$ corresponds to the infinite spine.)
\item[(iii)] \textbf{The ``age'' of the individuals}. 
For $k\in \{1, \ldots, n\}$, set $\Delta_k$ as the length of the
intermediate interval to the next $X_j$ on the right if $X_k<0$ or on
the left if $X_k>0$:
\[
\Delta_k=
\begin{cases}
   X_k -\max\{X_j,\, X_j<X_k\text{ and } 0\leq j\leq n\} & \text{if $X_k>0$},\\
- X_k +\min\{X_j,\, X_j>X_k\text{ and } 0\leq j\leq n\} & \text{if $X_k>0$}.
\end{cases}
\]
Conditionally on  $(\eg, \ed,  X_1, \ldots,  X_n)$, let
     $(\zeta_{k}^{ \text{S}}, 1\leq  k\leq n)$  be independent  random variables
     such that   $\zeta_{k}^{ \text{S}}$ is  distributed as, with  $U$ is uniform on $[0, 1]$:
\[
\inv{2\theta \beta} \log\left(1- \frac{2\theta \Delta_k}{\log(U)}\right).
\]

\item[(iii)] \textbf{The tree}. 
Let $\mathfrak{T}_n^{ \text{S}}$ be the tree associated with the ancestral process
     $\sum_{k=1}^n \delta_{(X_k, \zeta_{k}^{ \text{S}})}$.

\end{itemize}
Then, the tree $\mathfrak{T}_n^{ \text{S}}$ is distributed as  the genealogical tree of $n$
  individuals picked  uniformly at random among the extant population. 
\end{theoA}

The  notion of genealogical tree is
appropriate for certain abstractions of genetic relations (mitochondrial
DNA  that is  maternally  inherited when  ignoring  paternal leakage  or
hetero-mitochondrial  inheritance) in  diploid  organisms. It is however 
unclear how to extend our exact simulation algorithm to pedigree-conditioned genealogies
as  formalised in Sainudiin,  Thatte, and  Véber \cite{stv:zrdp}.

In the spirit of Theorem \ref{theo:simB}, we  also provide   two  dynamic  simulations  in
Subsections   \ref{sec:dynamic1}  and   \ref{sec:dynamic2},  where   the
individuals  are taken  one by  one and  the genealogical  tree is  then
updated. Our framework allows also  to simulate the genealogical tree of
$n$ extant  individuals conditionally given  the time $A_0$ to  the most
recent    common    ancestor    of   the    extant    population,    see
Subsection~\ref{sec:cond_simul}.  Let us stress that the existence of an
elementary  simulation  method  is  new  in  the  setting  of  branching
processes  (in  particular because  this  method  avoid the  size-biased
effect on the population which usually comes from picking individuals at
random), and the question goes back to Lambert \cite{l:ctbp} and Theorem
4.7 in \cite{cd:spsmrcatsbp}.  \medskip

The  ancestral process  description  allows also  to  compute the  limit
distribution of the total length of  the genealogical tree of the extant
population at  time $t\in\R$.   More precisely,  let $\Lambda_n$  be the
total  length of  the tree  of $n$  individuals randomly  chosen in  the
extant  population at  time $t$,  see \reff{eq:Lambda'n}  for a  precise
definition.  We state the main result of Section \ref{sec:total-l}, 
see Theorem \ref{thm:cvLn}. 
\begin{theoA}
   \label{thmA:cvLn}
   The                                                          sequence
   $\left(\Lambda_n - \E[\Lambda_n|Z_t],  n\in \N ^*\right)$ converges
   a.s.~and in $L^2$ towards a limit, say $\cl_t$, as $n$ tends to $+\infty$. And we
   have:
\[
\E[\Lambda_n|Z_t]=\frac{Z_t}{\beta}  \log\left(\frac{n}{2\theta Z_t
     }\right) + O(n^{-1}\log(n)).
\] 
\end{theoA}
This result is in the spirit of Pfaffelhuber, Wakolbinger and Weisshaupt
\cite{pww:tlec} on the  tree length of the coalescent, which  is a model
for constant population size. The fact that the same shift in $\log(n)$
appears in \cite{pww:tlec} and in Theorem \ref{thmA:cvLn} comes from the
fact that  the speed of coming down
from infinity  (or the birth rate of new branches near the top of the
tree in forward time)
 is of the same order  for the  Kingman coalescent (see \cite{bbl})
 and this  model (see Corollary 6.5 and Remark 6.6 in
 \cite{cd:spsmrcatsbp}). 
\medskip

As part of Theorem \ref{thm:cvLn}, we also
get that $\cl_{t}$ coincides with the  limit of the shifted total length
$L_\varepsilon$ of  the genealogical tree  up to $t-\varepsilon$  of the
individuals alive  at time  $t$ obtained  in \cite{bd:tl}:  the sequence
$(L_\varepsilon   -  \E[L_\varepsilon|Z_t],   \varepsilon>0)$  converges
a.s.~towards  $\cl_{t}$  as  $\varepsilon$   goes  down  to  zero.   See
\cite{bd:tl} for some properties of the process $(\cl_{t}, t\in \R)$
such as  the Laplace transform  of  $\cl_{t}$ which is 
given by, for $\lambda>0$:
\begin{equation}
   \label{eq:def-Z-f}
\E\left[\expp{-\lambda \cl_{t}}|Z_t\right]
=\expp{2\theta Z_t \, \varphi(\lambda/(2\beta\theta))}, 
\quad\text{with}\quad
\varphi(\lambda )=\lambda \int_0^1 \frac{1- v^\lambda}{1-v} \, dv.
\end{equation}
The proof  of Theorem \ref{thmA:cvLn} is based on technical  $L^2$ computations. \medskip

The  paper is  organized  as  follows.  We  first  introduce in  Section
\ref{sec:notations}  the  framework of  real  trees  and we  define  the
Brownian CRT  that describes the  genealogy of  the CB in  the quadratic
case. Section  \ref{sec:ancestral} is devoted  to the description  via a
Poisson point measure of the  ancestral process of the extant population
at time 0 and Section \ref{sec:simul} gives the different simulations of
the  genealogical  tree  of  $n$ individuals  randomly  chosen  in  this
population.  Then,  Section  \ref{sec:total-l} concerns  the  asymptotic
length of the genealogical tree for those $n$ sampled individuals.

\section{Notations}\label{sec:notations}
We set $\R^*=(-\infty , 0) \cup (0, +\infty )$,  $\N^*=\{1, 2,
\ldots, \}$ and $\N=\N^* \cup \{0\}$. Usually $I$ will denote generic
index set which might be finite, countable or uncountable. 

\subsection{Excursion measure for Brownian motion with drift}
\label{sec:em-bmd}
In this section we state some well-known results on 
excursion measures of the Brownian motion with drift. Let $B=(B_t, t\geq
0)$  a standard Brownian motion and let $\beta>0$ be fixed. Let
$\theta\in \R$. We consider 
$B^{(\theta)}=(B^{(\theta)}_t,  t\geq 0)$  a Brownian motion  with
drift $-2\theta$ and scale $\sqrt{2/\beta}$: 
\begin{equation}\label{eq:def-Btheta}
B^{(\theta)}_t =\sqrt{\frac{2}{\beta}}\, B_t-2\theta t, \quad t\geq 0.
\end{equation}
Consider the minimum process $I^{(\theta)}=(I_t^{(\theta)}, t\geq 0)$ of
$B^{(\theta)}$                         defined                        by
$I^{(\theta)}_t=\min_{u\in     [0,     t]}     B^{(\theta)}_u$.      Let
$n^{(\theta)}(de)$   be   the   excursion   measure   of   the   process
$B^{(\theta)} - I^{(\theta)}$ above $0$  associated with its local time at
$0$ given by  $ -\beta I^{(\theta)}$. This normalization  agrees with the
one  in \cite{dlg:rtlpsbp}  given  for $\theta\geq  0$,  see the  remark
below.    Let    $\sigma=\sigma(e)=\inf\{s>0,     \,    e(s)=0\}$    and
$\zeta=\zeta(e)=\max_{s\in  [0, \sigma]}(e_s)$  be  the  length and  the
maximum of the excursion $e$.

\begin{rem}
   \label{rem:norm-dlg}
   In  this  remark, we  assume that  $\theta>0$ (i.e. the Brownian motion has a negative drift).   In  the
   framework   of   \cite{dlg:rtlpsbp},   see   Section   1.2   therein,
   $B^{(\theta)}$  is  the  height  process  which  codes  the  Brownian
   continuum random  tree (CRT)  with branching  mechanism $\psi_\theta$
   defined by \reff{eq:psi}.  It is obtained from  the underlying L\'evy
   process $X=(X_t, t\geq 0)$, which  in the case of quadratic branching
   mechanism     is     the      Brownian     motion     with     drift:
   $X_t=\beta  B^{(\theta)}_t=\sqrt{2\beta}\,  B_t-2\beta\theta t$  (see
   formula     (1.7)    in     \cite{dlg:rtlpsbp}).     According     to
   \cite{dlg:rtlpsbp}  Section 1.1.2,  considering  the minimum  process
   $I=(I_t, t\geq  0)$, with $I_t=\min_{u\in  [0, t]} X_u$,  the authors
   choose the normalization in such a way that $-I$ is the local time at
   0 of $X-I$.  The choice of the normalization of the local time at $0$
   of  $B^{(\theta)}  - I^{(\theta)}$  is  justified  by the  fact  that
   $I=\beta I^{(\theta)}$.  Recall the definition of $c_\theta$ in \eqref{eq:def-c}.
   Then from Section 3.2.2  and Corollary 1.4.2
   in \cite{dlg:rtlpsbp}, we have that for $\theta\geq 0$:
\begin{equation}
   \label{eq:normlalisation}
n^{(\theta)}\left[1- \expp{-\lambda \sigma}\right]=
\psi_\theta^{-1}(\lambda), \quad \lambda>0, 
\end{equation}
and 
\begin{equation}
   \label{eq:normlalisation-zeta}
n^{(\theta)} (\zeta\geq h)=n^{(\theta)} (\zeta> h)=c_\theta(h), \quad h>0.
\end{equation}
\end{rem}

For  $\theta\in   \R$,  let  $\P_\theta^\uparrow(de)$  be   the  law  of
$B^{(\theta)} - 2 I^{(\theta)}$. According to \cite{b:lp} Proposition 14
and Theorem  20 in Section  VII, $\P_\theta^\uparrow(de)$ is the  law of
$B^{(\theta)}$ conditionally on being positive.  For $\theta\in \R$, let
$n_{\theta}$  be  the  excursion  measure of  $B^{(\theta)}$  outside  0
associated with the local time $L^0=L^0(B^{(\theta)})$.  For completeness,
we  give at  the end  of this  section a  proof of  the following  known
result.  Let $\cc([0,  +\infty ))$ be the set  of real-valued continuous
function defined on $[0, +\infty )$. Recall that, according to Definition \eqref{eq:def-Btheta} of $B^{(\theta)}$, the case $\theta<0$ corresponds to a positive drift for the Brownian motion.
\begin{lem}
   \label{lem:mb-drift}
We have for $\theta\in \R$ and $A\in \cc([0,
+\infty ))$ a measurable sub-set:
\begin{equation}
   \label{eq:nq=}
n_{\theta}(e\in A)=\frac{\beta}{2} \left[n^{(|\theta|)} (e\in A) +
  n^{(|\theta|)} (-e\in A)
+ 2|\theta|\P_\theta^\uparrow (-\sgn(\theta) e \in A)\right].
\end{equation}
We also have that:
\begin{equation}
   \label{eq:Pq-nq}
\P_\theta^\uparrow(de)=\P_{-\theta}^\uparrow(de)
\quad \text{and}\quad  
n^{(\theta)}(de ) \ind_{\{\sigma<+\infty \}}= n^{(|\theta|)}(de),
\end{equation}
and for $\theta<0$:
\begin{equation}
   \label{eq:nq<0}
n^{(\theta)}(\sigma=+\infty )=2|\theta| 
\quad\text{and}\quad 
n^{(\theta)}(de)\ind_{\{\sigma=+\infty\}} =
2|\theta|\P_\theta^\uparrow(de).
\end{equation}
Furthermore, if $\theta<0$, then $-\beta I^{(\theta)}_\infty $ is 
exponentially distributed with parameter  $2|\theta|$. 
\end{lem}

\begin{rem}
   \label{rem:ADnq}
   The  excursion measure  $n^{(\theta)}$ corresponds also to the
   excursion measure $\bar n^{(\theta)}$ introduced in  \cite{ad:ctvmp} of  the height
   process in the  super-critical  case, that  is for 
   $\theta<0$. Indeed  Corollary  4.4 in
   \cite{ad:ctvmp} gives that $\bar n^{(\theta)}(de )
   \ind_{\{\sigma<+\infty \}}= n^{(|\theta|)}(de)$ and 
 Lemma 4.6 in
   \cite{ad:ctvmp} gives that $\bar n^{(\theta)}(\sigma=+\infty  )=
   2|\theta|$. 
\end{rem}

Let $\theta\geq 0$ and  $\cn(dh, d \varepsilon, de)=\sum_{i\in \ci} \delta(h_i, e_i) (dh,
de)$ be a Poisson point measure on $\R_+ \times \cc([0, +\infty ))$ 
with intensity $\beta\ind_{\{h\geq 0\}}\,  dh \, n^{(\theta)}(de)$. 
For every $i\in \ci$, we set:
\[
a_i=\sum_{j\in \ci}\ind_{\{h_j<h_i\}}\sigma(e_j)\quad\mbox{and}\quad
b_i=a_i+\sigma (e_i),
\]
where  $\sigma(e_i)$  is  the  length of  excursion  $e_i$.   For  every
$t\ge   0$,  we   set  $i_t$   the  only   index  $i\in\ci$   such  that
$a_i\le  t<b_i$.   Notice that  $i_t$  is  a.s.~well  defined but  on  a
Lebesgue-null  set  of   values  of  $t$.   We   define  the  process
$(Y,J)=((Y_t, J_t),           {t\ge          0})$ by:
\[
Y_t=e_{i_t}(t-a_{i_t})
\quad\mbox{and}\quad 
J_t=h_{i_t}
\quad \text{for } t\geq 0,
\]
with the convention $Y_t=0$ and $J_t=\sup\{J_s, s<t\}$ for $t$ such that
$i_t$ is not well defined. 
Since $n^{(\theta)}$ is the excursion measure of $B^{(\theta)}
- I^{(\theta)}$ above 0 associated  to its local time at 0 given by
  $-\beta I^{(\theta)}$, we deduce the following corollary from
  excursion theory. 
\begin{cor}
   \label{cor:b+-b-}
Let $\theta\geq 0$. We have that $(Y,J)$ is distributed as $(B^{(\theta)}
- I^{(\theta)}, - I^{(\theta)})$, and thus  $Y-J$ and  $Y+J$ are respectively
distributed as  $B^{(\theta)}$ and $B^{(\theta)} - 2
 I^{(\theta)}$. 
\end{cor}

According to Theorem 1 from \cite{rp:mf} and taking into account the
scale $\sqrt{2/\beta}$, the process $B^{(\theta)} - 2
 I^{(\theta)}$ is a diffusion on $[0, +\infty )$ with infinitesimal
 generator:
\begin{equation}
   \label{eq:generator}
\beta^{-1} \,  \partial^2_x + 2|\theta| \coth (\beta |\theta|
x)\,  \partial_x, \quad x\in [0, +\infty ).
\end{equation}

\begin{proof}[Proof of Lemma \ref{lem:mb-drift}]
  Since $B^{(\theta)} - 2
I^{(\theta)}$ and $B^{(-\theta)} - 2 I^{(-\theta)}$ have the same
distribution, see Theorem 1 from \cite{rp:mf}, we deduce that
$\P_\theta^\uparrow(de)=\P_{-\theta}^\uparrow(de)$, which gives the
first part of \reff{eq:Pq-nq}. 
\medskip 

For $\theta, \lambda\in \R $, we  set $\varphi_\theta(\lambda)
=\psi_\theta(\lambda/\beta)=\beta ^{-1} \lambda^2
- 2\theta \lambda$ 
so that $\E[\exp( \lambda B^{(\theta)}_t)]=\exp(t
  \varphi_\theta(\lambda))$. Elementary computations gives:
\[
\int_0^\infty  \expp{-\lambda x} c_\theta(x)^{-1}  \,
dx=\inv{\varphi_\theta(\lambda)}\quad\text{for all $\lambda>2\beta \max(\theta, 0)$}.
\]
This implies that $1/c_\theta$ is the scale function of $ B^{(\theta)}$,
see Theorem 8  in Section VII from \cite{b:lp}. Thanks  to Theorem 8 and
Proposition 15  in Section  VII from  \cite{b:lp}, there exists a
positive constant $k_\theta$ such that   for all
$t>0$   and   $A$   in   the   $\sigma$-field   $\ce_t$   generated   by
$(e(s), s\leq t)$:
\begin{equation}
   \label{eq:kE}
n^{(\theta)} (A, \sigma>t) = k_\theta \E_\theta^\uparrow \left[
  c_\theta( e(t)) \ind_A \right],
\end{equation}
 and $ n^{(\theta)} (\zeta>h) = k_\theta c_\theta(h)$ for all $h>0$.
We deduce from \reff{eq:normlalisation-zeta} and the latter equality that
$k_\theta=1$ for $\theta\geq 0$.

We now prove that $k_\theta=1$ also for $\theta<0$. Assume that
$\theta<0$. Letting $t$ goes to infinity in \reff{eq:kE} (with $A$
fixed) and using  that
$\P_\theta^\uparrow(de)$-a.s. $\lim_{t\rightarrow +\infty } e(t)=+\infty
$, we deduce that:
\begin{equation}
   \label{eq:nq-Pq-k}
n^{(\theta)}( A, \sigma=+\infty )=2|\theta| \, k_\theta
\P_\theta^\uparrow(A) \quad\text{for all $A\in \ce_t$ and all $t\geq 0$,}
\end{equation} 
and taking for $A$ the whole state space, we get that:
\begin{equation}
   \label{eq:nq=infty}
n^{(\theta)}(\sigma=+\infty
)=2|\theta| k_\theta. 
 \end{equation} 
 By  the excursion  theory and  the  chosen normalization,  we get  that
 $  -  \beta  I^{(\theta)}_\infty  $  is  exponential  with  parameter
 $n^{(\theta)}(\sigma=+\infty)$.  Since  by scaling  $ -I^{(\theta)}_\infty$
 is also distributed as $\inf\{ B_t -  \beta \theta t, \, t\geq 0\}$, we
 deduce  from IV-5-32  p.~70 in  \cite{bs:hbm}, that  $ -I^{(\theta)}_\infty$  is
 exponential   with   parameter   $2\beta  |\theta|$   and   thus   that
 $   -\beta   I^{(\theta)}_\infty$   is   exponential   with   parameter
 $2 |\theta|$. This implies that
 $n^{(\theta)}(\sigma=+\infty)=2|\theta|$, which gives the first part of \reff{eq:nq<0}
 and thus, thanks to \reff{eq:nq=infty}, we get $k_\theta=1$. Then use
 \reff{eq:nq-Pq-k} with  $k_\theta=1$ to get the second part of
 \reff{eq:nq<0}. 
\medskip

Let $\theta<0$. Let $t>0$ and 
$A\in \ce_t$. 
We have:
\begin{align*}
n^{(\theta)} (A, \sigma>t)
&= \E_\theta^\uparrow \left[
  c_\theta( e(t)) \ind_A \right]\\
&= 2|\theta| \P_\theta^\uparrow(A) + \E_{|\theta|}^\uparrow
  \left[c_{|\theta|}(h)( e(t)) \ind_A 
\right]\\
& = n^{(\theta)} (A, \sigma=+\infty ) + n^{(|\theta|)} (A, \sigma>t),
\end{align*}
where we used 
\reff{eq:kE} and $k_\theta=1$ for the first  equality; that  
$c_{\theta}(h)=2|\theta| + c_{|\theta|}(h)$ for all $h>0$, thanks to 
\reff{eq:def-c} and that
$\P_\theta^\uparrow(de)=\P_{-\theta}^\uparrow(de)$ for the second; and 
\reff{eq:kE} with $|\theta|$ instead of $\theta$ and $k_{|\theta|}=1$ as
well as \reff{eq:nq-Pq-k}
for the last. This implies the last part of \reff{eq:Pq-nq} for
$\theta<0$. We also deduce from
\reff{eq:normlalisation} that $n^{(\theta)} (\sigma=+\infty )=0$  for
$\theta\geq 0$. Thus the second part of \reff{eq:Pq-nq} holds also for
$\theta\geq 0$ and thus for $\theta\in \R$. 
\medskip

We shall  now prove  \reff{eq:nq=}. Recall  $n_\theta$ is  the excursion
measure  of $B^{(\theta)}$  outside  $0$ associated with the local  time
$L^0=L^0(B^{(\theta)})$, $n^{(\theta)}(de)$ is  the excursion measure of
$B^{(\theta)}-  I^{(\theta)}$ above  0, and  $n^{(-\theta)}(de)$ is  the
excursion measure  of $B  ^{(-\theta)}- I^{(-\theta)}$ above  0.  Notice
that    $B    ^{(-\theta)}-    I^{(-\theta)}$    is    distributed    as
$-(            B^{(\theta)}-            M^{(\theta)})$,            where
$M^{(\theta)}     =(M_t^{(\theta)},     t\geq    0)$,     defined     by
$M^{(\theta)}_t=\sup_{u\in  [0,  t]}  B^{(\theta)}_u$,  is  the  maximum
process,  and thus  $n^{(-\theta)}(d(-e))$ is  the excursion  measure of
$B  ^{(\theta)}- M^{(\theta)}$  below  0.   According to  \cite{b:dmdml}
p.~334 (which  is stated  for $\beta=2$  but can  clearly be  stated for
$\beta>0$ using a scaling  in time), we get that:  i) $n^{(\theta)}(de)$, the
excursion measure of $B^{(\theta)}- I^{(\theta)}$  above 0, is equal, up
to a multiplicative  constant due to the choice of  the normalization of
the    local    times,    to    $\ind_{\{e>0\}}    n_\theta(de)$;    ii)
$n^{(-\theta)}(d(-e))$,       the       excursion       measure       of
$B ^{(\theta)}- M^{(\theta)}$ below 0,  is equal, up to a multiplicative
constant due to  the choice of the normalization of  the local times, to
$\ind_{\{e>0\}} n_\theta(de)$. Thus, we have, for some positive constant
$a_\theta$ and $b_\theta$, that:
\[
n_\theta(de)=a_\theta n^{(\theta)}(de) + b_\theta n^{(-\theta)}(d(-e)).
\]
Thanks to \reff{eq:Pq-nq} and \reff{eq:nq<0}, we get that \reff{eq:nq=}
is proved once we prove that $a_\theta=b_\theta=\beta/2$. 
\medskip

Let  us assume  for  simplicity  that $\theta\geq  0$  (the argument  is
similar for $\theta\leq  0$). By the excursion theory,  $L^0_\infty $ is
exponential                        with                        parameter
$n_\theta(\sigma=+\infty  )=  b_\theta n^{(-\theta)}(\sigma=+\infty  )=2
\theta                                                        b_\theta$.
According to V-3-11 p.~90 in \cite{bs:hbm}, $L^0_\infty $ is exponential
with  parameter  $\beta   \theta$  (use  that  $(L^0_t,   t\geq  0)$  is
distributed as  $(L^0_{2t/\beta} (W), t\geq 0)$  the local time at  0 of
the Brownian motion $W=(W_t=B_t -  \beta\theta t, t\geq 0)$). This gives
$b_\theta=\beta/2$.

We now prove that $a_\theta=\beta/2$. Let $T= \sup\{B^{(\theta)}_t, t\geq 0\}$. We have:
\[
\P(T<a)
=\E\left[\expp{- n_{\theta}(\zeta\geq a, e>0)\, L^0_\infty  }\right]
=\E\left[\expp{- a_\theta\, n^{(\theta)} (\zeta\geq a)\,L^0_\infty }\right]
= \E\left[\expp{- a_\theta c_\theta(a) \, L^0_\infty }\right]
= \frac{\beta\theta }{\beta\theta+ a_\theta c_\theta(a)},
\]
where we used that $L^0_\infty $  is exponential with
parameter $\beta \theta$ for the last equality.
  Since  by scaling  $ T$ 
 is also distributed as $\sup\{ B_t -  \beta \theta t, \, t\geq 0\}$, we
 deduce  from IV-5-32  p.~70 in  \cite{bs:hbm}, that  $ T$  is
 exponential   with   parameter   $2\beta  \theta$. 
This gives $\P(T<a)=1 - \expp{- 2\beta\theta a}$. Using \reff{eq:def-c},
we deduce that $a_\theta=\frac{\beta}{2}$. This ends the proof of the
lemma. 
\end{proof}

\subsection{Real trees}
\label{sec:real}

The study of real trees has been motivated by algebraic and geometric purposes.
See in particular the survey \cite{dmt:tto}. It has been first used in
\cite{epw:rprtrgr} to study random continuum trees, see also
\cite{e:prt}.

\begin{defi}[Real tree]
\label{defi:realtree}
A real tree is a metric space $(\bt,d_\bt)$ such that:
\begin{itemize}
\item[(i)] For every $x,y\in\bt$, there is a unique isometric map $f_{x,y}$
  from $[0,d_\bt(x,y)]$ to $\bt$ such that $f_{x,y}(0)=x$ and $f_{x,y}(d_\bt(x,y))=y$.
\item[(ii)] For every $x,y\in\bt$, if $\phi$ is a continuous injective map from $[0,1]$ to $\bt$ such that $\phi(0) = x$ and
$\phi(1) = y$, then $\phi([0, 1]) = f_{x,y}([0, d_\bt(x,y)])$.
\end{itemize}
\end{defi}

Notice that a real tree is  a length space as defined in \cite{bbi:cmg}.
We say that  $(\bt, d_\bt, \partial_\bt)$ is a {\sl  rooted } real tree,
where $\partial=\partial_\bt$ is a  distinguished vertex of $\bt$, which
will be called the root.  Remark that the set $\{\partial\}$ is a rooted
tree that only contains the root.

Let $\bt$ be a compact rooted real tree  and let  $x,y\in\bt$. We denote by $\lb x,y\rb$
the   range    of   the   map   $f_{x,y}$    described   in   Definition
\ref{defi:realtree}. We also  set $\lb x,y\lb=\lb x,y\rb\setminus\{y\}$.
We define the out-degree of $x$, denoted by $k_\bt(x)$, as the number of
connected  components of  $\bt\setminus\{x\}$  that do  not contain  the
root. If  $k_\bt(x)=0$, resp. $k_\bt(x)>1$,  then $x$ is called  a leaf,
resp. a branching point.  A tree is said to be
binary if  the out-degree of  its vertices belongs to  $\{0,1,2\}$.  The
skeleton of the  tree $\bt$ is the  set $\mathrm{sk}(\bt)$ of points of $\bt$  that are not
leaves.  Notice     that
$\clo (  \mathrm{sk}(\bt))=\bt$, where  $\clo(A)$ denote the  closure of
$A$.

We denote by $\bt_x$ the
sub-tree of $\bt$ above $x$ \textit{i.e.}~$$\bt_x=\{y\in\bt,\ x\in \lb\partial,y\rb\}$$
rooted at $x$.
We say that $x$ is an ancestor of $y$, which we denote
by $x\preccurlyeq y$, if $y\in \bt_x$. We write 
$x\prec y$ if furthermore $x\neq y$. Notice that $\preccurlyeq$ is a
partial order on $\bt$. 
We denote by $x\wedge y$ the Most Recent Common Ancestor (MRCA) of $x$ and $y$ in
$\bt$ \textit{i.e.}~the unique vertex of $\bt$ such that
$\lb\partial,x\rb\cap\lb\partial,y\rb=\lb\partial, x\wedge y\rb$.

We denote by
$h_\bt(x)=d_\bt(\partial,x)$ the height of the vertex $x$ in the tree
$\bt$ and by $H(\bt)$ the height of the tree $\bt$:
$$H(\bt)=\max\{h_\bt(x),\ x\in\bt\}.$$


Recall  $\bt$ is  a compact rooted real tree and let $(\bt_i, {i\in I})$ be a
family of rooted trees, and $(x_i, {i\in  I})$ a family of  vertices of $\bt$.
We   denote  by   $\bt_i^\circ=\bt_i\setminus\{\partial_{\bt_i}\}$.   We
define  the  tree  $\bt\circledast_{i\in  I}  (\bt_i,x_i)$  obtained  by
grafting the trees $\bt_i$ on the tree $\bt$ at points $x_i$ by
\begin{align*}
 & \bt\circledast_{i\in I}(\bt_i,x_i)=\bt\sqcup\left(\bigsqcup_{i\in I} \bt_i^\circ\right),\\
&
d_{\bt\circledast_{i\in I}
  (\bt_i,x_i)}(y,y') =\begin{cases}
d_\bt(y,y') & \mbox{if }y,y'\in \bt,\\
d_{\bt_i}(y,y') & \mbox{if }y,y'\in \bt_i^\circ,\\
d_\bt(y,x_i)+d_{\bt_i}(\partial_{\bt_i},y') & \mbox{if } y\in\bt\mbox{
  and }y'\in\bt_i^\circ,\\
d_{\bt_i}(y,\partial_{\bt_i})+d_\bt(x_i,x_j)+d_{\bt_j}(\partial_{\bt_j},y')
&  \mbox{if } y\in\bt_i^\circ\mbox{
  and }y'\in\bt_j^\circ\mbox{ with }i\ne j,
\end{cases}\\
 & \partial_{\bt\circledast_{i\in I}(\bt_i,x_i)}=\partial_\bt,
\end{align*}
where $A\sqcup B$ denotes the disjoint union of the sets $A$ and
$B$. Notice that $\bt\circledast_{i\in I}(\bt_i,x_i)$ might not be
compact. 
\medskip

We say that  two rooted real trees $\bt$ and  $\bt'$ are equivalent (and
we note $\bt\sim\bt'$)  if there exists a  root-preserving isometry that
maps $\bt$ onto $\bt'$. We denote by $\T$ the set of equivalence classes
of compact rooted real trees.   The metric space $(\T,d_{GH})$, with the
so-called Gromov-Hausdorff  distance  $d_{GH}$,  is Polish,  see
\cite{epw:rprtrgr}. This allows to define random real trees.

\subsection{Coding a compact real tree by a function and the Brownian
  CRT}\label{sec:CRT}

Let     $\ce$     be     the      set     of     continuous     function
$g:[0,+\infty)\longrightarrow [0,+\infty)$ with compact support and such
that     $g(0)     =     0$.      For     $g\in     \ce$,     we     set
$\sigma(g)=\sup  \{x, \,  g(x)>0\}$.  Let $g\in  \ce$,  and assume  that
$\sigma(g)>0$,  that  is   $g$  is  not  identically   zero.  For  every
$s, t \ge 0$, we set:
\[
m_g(s, t) = \inf _{r\in [s\wedge t,s\vee  t ]}g(r),
\]
and
\begin{equation}\label{eq:dist_g}
d_g(s, t) = g(s) + g(t) - 2m_g(s, t ).
\end{equation}
It is easy to check that  $d_g$ is a pseudo-metric on $[0,+\infty)$.  We
then say that $s$ and $t$ are equivalent  iff $d_g(s, t) = 0$ and we set
$T_g$ the associated quotient space. We  keep the notation $d_g$ for the
induced distance  on $T_g$.   Then the  metric space  $(T_g, d_g)$  is a
compact  real-tree, see  \cite{dlg:pfalt}.   We denote  by  $p_g$  the
canonical  projection  from $[0,  +\infty  )$  to  $T_g$. We  will  view
$(T_g, d_g)$  as a rooted real  tree with root $\partial  = p_g(0)$. We
will call $(T_g,d_g)$ the  real tree coded by $g$, and conversely that $g$ is a contour function of the tree $T_g$. We  denote by $F$ the
application that associates with a function $g\in\ce$ the equivalence class of the tree $T_g$.

Conversely every rooted compact real tree $(T,d)$ can be coded by a
continuous function $g$ (up to a root-preserving isometry), see
\cite{d:ccrtrvf}.
\medskip

We define the  Brownian CRT, $\tau=F (e)$, as the  (equivalence class of
the) tree coded by the  positive excursion $e$ under $n^{(\theta)}$, see
Section \ref{sec:em-bmd}.  And we define the  measure $\N^{(\theta)}$ on
$\T$ as the ``distribution'' of $\tau$,  that is the push-forward of the
measure   $n^{(\theta)}$   by   the   application   $F$.   Notice   that
$H(\tau)=\zeta(e)$.

Let     $e$   be  with     ``distribution''     $n^{(\theta)}(de)$     and
let $(\Lambda_s^a, s\ge 0,  a\ge 0)$ be the local  time of $e$ at  time $s$ and
level $a$.  Then, we  define the  local time measure  of $\tau$  at level
$a\ge 0$,  denoted by $\ell_a(dx)$,  as the push-forward of  the measure
$d\Lambda_s^a$   by  the   map  $  F$,  see   Theorem  4.2   in
\cite{dlg:pfalt}. We shall define $\ell_a$ for $a\in \R$ by setting
$\ell_a=0$ for $a\in \R\setminus [0, H(\tau)]$.

\subsection{Trees with one semi-infinite branch}
\label{sec:forest}

The  goal of  this section  is to  describe the  genealogical tree  of a
stationary  CB  with  immigration  (restricted to  the  population  that
appeared before time 0). For this purpose, we add an immortal individual
living from  $-\infty$ to 0 that  will be the spine  of the genealogical
tree (\textit{i.e.}~the semi-infinite branch) and will be represented by the half
straight line  $(-\infty,0]$, see Figure  \ref{fig:infinite_tree}. Since
we are  interested in  the genealogical  tree, we don't record  the population
generated by  the immortal  individual after  time 0.  The distinguished
vertex in the  tree will be the point  0 and hence would be  the root of
the tree in  the terminology of Section \ref{sec:real}.  We will however
speak of the distinguished  leaf in what follows in order  to fit with the
natural intuition. In  the same spirit, we will  give another definition
for the  height of a vertex  in such a  tree in order to  allow negative
heights.

\begin{figure}[H]
\includegraphics[width=7cm]{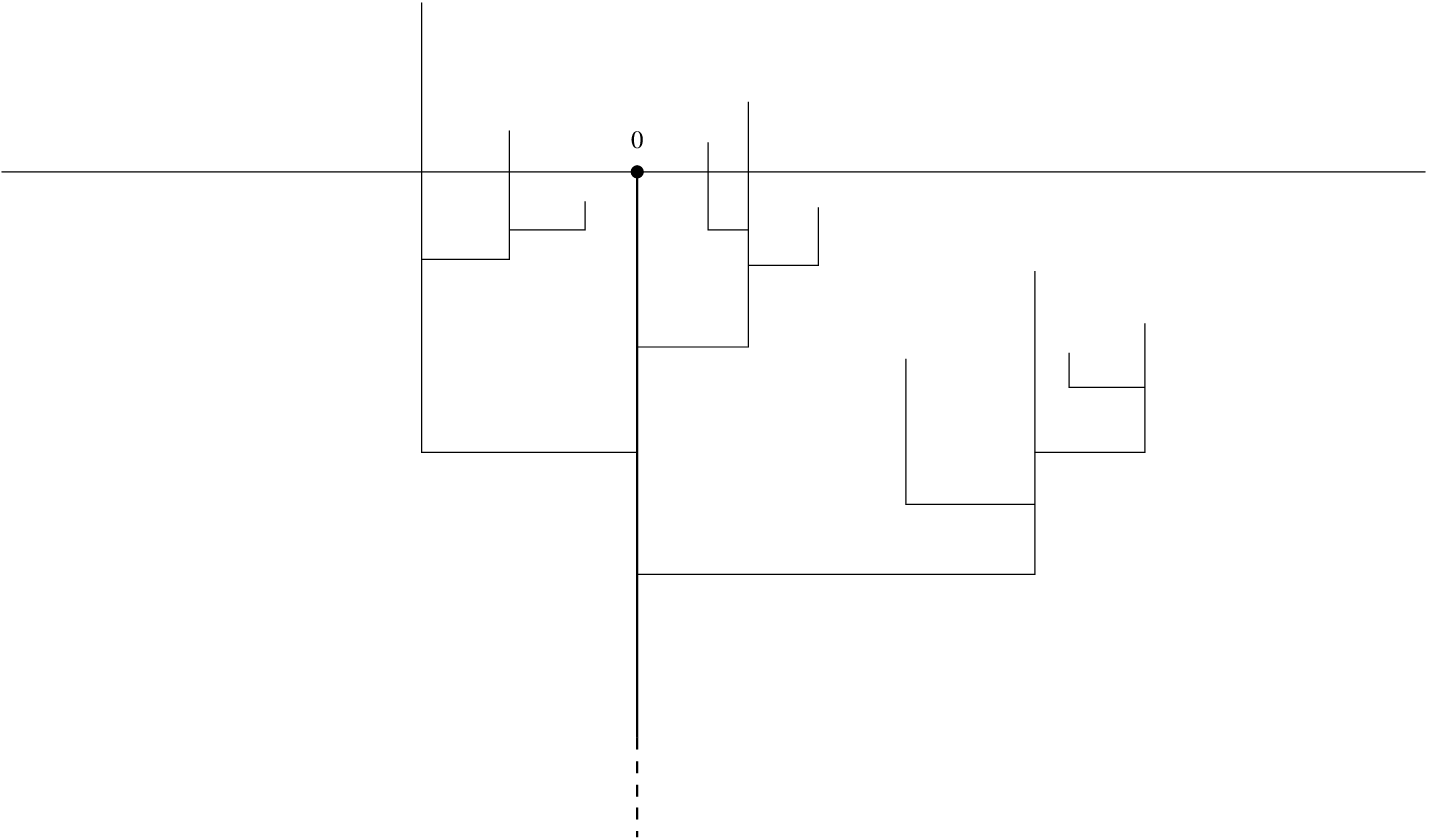}
\caption{An instance of a tree with a semi-infinite branch}\label{fig:infinite_tree}
\end{figure}

\subsubsection{Forests}
\label{sec:forests}
A  forest  $\bff$  is  a  family $((h_i,\bt_i), \, i\in  I)$  of  points  of
$\R\times \T$. Using  an immediate extension of  the grafting procedure,
for  an  interval $\mathfrak{I}\subset  \R$,  we  define the  real  tree
\begin{equation}\label{eq:tree-forest}
\bff_\mathfrak{I}=\mathfrak{I}\circledast_{i\in I, h_i\in \mathfrak{I}}
(\bt_i,                                                           h_i).
\end{equation}

Let us denote, for $i\in I$,  by $d_i$ the distance of the tree $\bt_i$
and by $\bt_i^\circ=\bt_i\setminus\{\partial_{\bt_i}\}$ the tree $\bt_i$
without its root. The
distance on $\bff_\mathfrak{I}$ is then defined, for $x,y\in
\bff_\mathfrak{I}$, by: 
\[
d_\bff(x,y)=\begin{cases}
d_i(x,y) & \text{ if }x,y\in\bt_i^\circ,\\
h_{\bt_i}(x)+|h_i-h_j|+h_{\bt_j}(y) & \text{ if } x\in\bt_i^\circ,\ y\in\bt_j^\circ\mbox{ with }i\ne j,\\
|x-h_j|+h_{\bt_j}(y) & \text{ if } x\not\in\bigsqcup_{i\in I}\bt_i^\circ,\
y\in\bt_j^\circ\\ 
|x-y| & \text{ if } x,y \not\in\bigsqcup_{i\in I}\bt_i^\circ.
\end{cases}
\]
 
Let us recall the following lemma (see \cite{ad:rpbt}).

\begin{lem}\label{lem:loc-comp}
  Let  $\mathfrak{I}\subset  \R$ be  a  closed  interval. If  for  every
  $a,b\in\mathfrak{I}$, such that $a<b$,  and every $\varepsilon>0$, the
  set $\{i\in I,\ h_i\in[a,b],\  H(\bt_i)>\varepsilon\}$ is finite, then
  the tree $\bff_\mathfrak{I}$ is a complete locally compact length
  space. 
\end{lem}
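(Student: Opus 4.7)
The plan is to establish two claims that together yield the conclusion: (a) $\bff_\mathfrak{I}$ is a length space, and (b) every closed ball of $\bff_\mathfrak{I}$ is compact. Statement (b) makes $\bff_\mathfrak{I}$ a proper metric space, and proper metric spaces are automatically both complete (every Cauchy sequence is bounded, hence relatively compact by (b), hence convergent) and locally compact, so the lemma follows.

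Claim (a) reads off the piecewise definition of $d_\bff$: each real tree $\bt_i$ is a geodesic space by Definition~\ref{defi:realtree}, and so is the interval $\mathfrak{I}$. For $x\in\bt_i^\circ$ and $y\in\bt_j^\circ$ with $i\neq j$, a geodesic realizing $d_\bff(x,y)$ is obtained by concatenating the geodesic in $\bt_i$ from $x$ to $\partial_{\bt_i}$, the interval segment from $h_i$ to $h_j$, and the geodesic in $\bt_j$ from $\partial_{\bt_j}$ to $y$; the remaining cases (same subtree, or one endpoint in $\mathfrak{I}$) are either immediate or special cases.

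For claim (b), the key tool is the natural $1$-Lipschitz projection $\pi:\bff_\mathfrak{I}\to \mathfrak{I}$ defined by $\pi(y)=y$ on $\mathfrak{I}$ and $\pi(y)=h_j$ on $\bt_j^\circ$. Fix $x\in \bff_\mathfrak{I}$ and $r>0$; set $h=\pi(x)$, and let $r'=r$ if $x\in\mathfrak{I}$ or $r'=r-h_{\bt_{i_0}}(x)$ if $x\in\bt_{i_0}^\circ$. When $r'\leq 0$, the ball $\bar B(x,r)$ is contained in the compact tree $\bt_{i_0}$ and there is nothing to prove; otherwise the distance formula gives $\pi(\bar B(x,r))\subset J:=[h-r',h+r']\cap \mathfrak{I}$, a compact subinterval of $\mathfrak{I}$ (since $\mathfrak{I}$ is closed). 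I then prove sequential compactness: given $(y_n)\subset \bar B(x,r)$, Bolzano--Weierstrass applied to $\pi(y_n)\in J$ yields a subsequence with $\pi(y_n)\to h^*\in J$. If infinitely many $y_n$ lie in a single $\bt_j$, compactness of $\bt_j$ furnishes a convergent sub-subsequence. Otherwise one may assume the $y_n$ belong to pairwise distinct $\bt_{j_n}^\circ$ with $h_{j_n}\to h^*$; the hypothesis of the lemma applied to the compact interval $J$ forces $H(\bt_{j_n})\to 0$, hence $d_{\bt_{j_n}}(\partial_{\bt_{j_n}},y_n)\to 0$, and the triangle inequality yields $d_\bff(y_n,h^*)\to 0$.

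The main obstacle is this last dichotomy: one must verify that any sequence in $\bar B(x,r)$ not eventually trapped in a single grafted subtree must, by the finiteness hypothesis on $\{i:\ h_i\in J,\ H(\bt_i)>\varepsilon\}$ for each $\varepsilon>0$, use grafted subtrees of vanishing height and therefore converge back to a point of $\mathfrak{I}$. Everything else reduces to bookkeeping of the piecewise distance formula.
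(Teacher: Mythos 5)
Your proof is correct. Note that the paper itself does not prove this lemma at all: it is stated with ``Let us recall the following lemma'' and a citation to an earlier work of the authors, so you are supplying an argument the paper omits rather than reproducing one. Your route --- show the space is geodesic by concatenating the geodesic in $\bt_i$ from $x$ to its root, the segment of $\mathfrak{I}$ from $h_i$ to $h_j$, and the geodesic in $\bt_j$ from its root to $y$ (whose total length matches the displayed formula for $d_\bff$), and then show properness of closed balls so that completeness and local compactness follow for free --- is the standard and cleanest way to get all three conclusions at once. The one step that carries the weight is exactly the dichotomy you flag: a sequence in $\bar B(x,r)$ visiting infinitely many distinct grafted subtrees has graft points $h_{j_n}$ in the compact subinterval $J$, so the finiteness hypothesis (applied with $[a,b]\supseteq J$ and each $\varepsilon>0$) forces $H(\bt_{j_n})\to 0$, hence $h_{\bt_{j_n}}(y_n)\to 0$ and $d_\bff(y_n,h^*)\le h_{\bt_{j_n}}(y_n)+|h_{j_n}-h^*|\to 0$ with $h^*\in J\subset\mathfrak{I}$; this is airtight. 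Two pieces of bookkeeping you should make explicit in a final write-up: the subcase where infinitely many $y_n$ lie in $\mathfrak{I}$ itself (handled directly by compactness of $J$), and the fact that $\bar B(x,r)\cap\bt_j$ is a closed subset of the compact tree $\bt_j$ in the first branch of the dichotomy. Neither affects the validity of the argument.
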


\subsubsection{Trees with one semi-infinite branch}
\label{sec:inf-branch}

\begin{defi}\label{def:T_1}
We set $\T_1$ the set of forests $\bff=((h_i,\bt_i), \, i\in I)$ such that
\begin{itemize}
\item for every $i\in I$, $h_i\le 0$,
\item for every
   $a<b$,       and      every       $\varepsilon>0$,      the       set
   $\{i\in  I,\ h_i\in[a,b],\  H(\bt_i)>\varepsilon\}$ is  finite.
\end{itemize}
\end{defi}

The following corollary, which is an elementary consequence of Lemma \ref{lem:loc-comp}, associates with a forest $\bff\in\T_1$ a complete and locally compact real tree.

\begin{cor}
   \label{cor:bff}
   Let $\bff=((h_i,\bt_i), \,  i\in I)\in \T_1$. Then,
   the  tree $\bff_{(-\infty,0]}$ defined by \reff{eq:tree-forest} is a complete and locally compact real tree.
\end{cor}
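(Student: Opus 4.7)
The plan is to derive Corollary \ref{cor:bff} as an immediate application of Lemma \ref{lem:loc-comp} to the closed unbounded interval $\mathfrak{I} = (-\infty, 0]$. The two conditions in Definition \ref{def:T_1} are tailored to match exactly what is needed: the first ensures that every grafting point lies in $\mathfrak{I}$, and the second is precisely the local finiteness condition in the statement of Lemma \ref{lem:loc-comp}.

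More precisely, I would first observe that $\mathfrak{I} = (-\infty, 0]$ is a closed interval of $\R$, so the grafting construction \reff{eq:tree-forest} and the distance formula $d_\bff$ recalled before Lemma \ref{lem:loc-comp} are well-defined on it. Since every $h_i \leq 0$ by the first bullet of Definition \ref{def:T_1}, the index set $\{i \in I,\ h_i \in \mathfrak{I}\}$ coincides with $I$ itself, and hence $\bff_{(-\infty,0]} = \mathfrak{I} \circledast_{i \in I}(\bt_i, h_i)$ uses all of the data of $\bff$. Next, the second bullet of Definition \ref{def:T_1} states that for every $a < b$ in $\R$, and in particular for every $a, b \in \mathfrak{I}$ with $a < b$, and every $\varepsilon > 0$, the set $\{i \in I,\ h_i \in [a,b],\ H(\bt_i) > \varepsilon\}$ is finite. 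This is exactly the hypothesis required by Lemma \ref{lem:loc-comp}.

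Having checked both hypotheses, I would apply Lemma \ref{lem:loc-comp} to conclude that $\bff_{(-\infty,0]}$ is a complete locally compact length space. The real-tree axioms (i) and (ii) of Definition \ref{defi:realtree} are automatic from the grafting construction: the base $\mathfrak{I} = (-\infty, 0]$, viewed as a metric space with the absolute-value distance, is itself a real tree, and each compact tree $\bt_i^\circ$ is attached along a single point $h_i$, so no cycle is created and the uniqueness of the isometric map between any two points is preserved.

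The only genuinely delicate point — and hence the main obstacle, such as it is — is to ensure that the passage from a bounded closed interval in Lemma \ref{lem:loc-comp} to the unbounded interval $(-\infty, 0]$ does not destroy local compactness. This is precisely what the local finiteness condition in Definition \ref{def:T_1} guarantees: around any point of $\bff_{(-\infty,0]}$, one can choose a bounded window $[a,b] \subset (-\infty,0]$ and a threshold $\varepsilon > 0$ cutting off the subtrees that are relevant to a small neighbourhood, leaving only finitely many branches of height at least $\varepsilon$ to deal with.
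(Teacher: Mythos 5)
Your proposal is correct and follows exactly the route the paper intends: the paper gives no written proof, stating only that the corollary is an elementary consequence of Lemma \ref{lem:loc-comp}, and your verification that the two bullets of Definition \ref{def:T_1} supply precisely the hypotheses of that lemma for $\mathfrak{I}=(-\infty,0]$ is the whole argument. The only superfluous worry is your final paragraph: Lemma \ref{lem:loc-comp} is stated for an arbitrary closed interval $\mathfrak{I}\subset\R$, so the unbounded interval $(-\infty,0]$ is already covered and no separate passage from the bounded case is needed.
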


Conversely, let  $(\bt,d_\bt,  \rho_0)$ be  a complete and locally compact rooted  real  tree.  We denote by $\cs(\bt)$ the set of vertices $x\in\bt$ such that at
least one of the connected components of $\bt\setminus\{x\}$ that do not
contain $\rho_0$ is unbounded.  If $\cs(\bt)$ is not empty, then it is a
tree  which  contains  $\rho_0$.   We   say  that  $\bt$  has  a  unique
semi-infinite branch  if $\cs(\bt)$  is non-empty  and has  no branching
point. We   set  $(\bt_i^\circ,i\in  I)$  the   connected  components  of
$\bt\setminus\cs(\bt)$.  For every  $i\in I$,  we set  $x_i$ the  unique
point  of $\cs(\bt)$  such that  $\inf\{d_\bt(x_i,y),\ y\in\bt_i^\circ\}=0$,
and:
\[
\bt_i=\bt_i^\circ\cup\{x_i\},\qquad h_i=-d(\rho_0,x_i).
\]
We  shall say  that $x_i$  is the  root of  $\bt_i$.  Notice  first that
$(\bt_i,  d_\bt, x_i)$  is a  bounded rooted  tree. It  is also  compact
since,  according  to the  Hopf-Rinow  theorem  (see Theorem  2.5.26  in
\cite{bbi:cmg}), it  is a  bounded closed subset  of a  complete locally
compact length space.  Thus it belongs to $\T$.

The family $\bff=((h_i,\bt_i), \, i\in I)$ is therefore a forest with $h_i<0$. To check that it belongs to $\T_1$, we need to prove that the second condition in Definition \ref{def:T_1} is satisfied which is a direct consequence of the fact that the tree $\bff_{[a,b]}$ is locally compact.\medskip

We  can therefore identify the set $\T_1$ with the set of (equivalence classes) of complete locally compact rooted real trees with a unique semi-infinite branch.      We can follow  \cite{adh:nghpdblcmms} to endow $\T_1$ with
a Gromov-Hausdorff-type distance for which $\T_1$ is a Polish
space.\medskip

We extend  the partial  order defined  for trees in  $\T$ to  forests in
$\T_1$, with the  idea that the distinguished leaf $\rho_0=0$  is at the
tip of the semi-infinite branch.  Let $\bff=(h_i,\bt_i)_{i\in I}\in\T_1$
and  write $\bt=\bff_{(-\infty,0]}$  viewed  as a  real  tree rooted  at
$\rho_0=0$ (with a  unique semi-infinite branch $\cs(\bt)=(-\infty,0]$).
For $x,y\in \bt$, we set $x\preccurlyeq y$ if either $x,y\in S(\bt)$ and
$d_\bff(x,\rho_0) \geq  d_\bff(y, \rho_0)$,  or $x,y\in \bt_i$  for some
$i\in I$  and $x\preccurlyeq y$ (with  the partial order for  the rooted
compact real tree $\bt_i$), or $x\in \cs(\bt)$ and $y\in \bt_i$ for some
$i\in  I$  and $d_\bff(x,\rho_0)\ge  |h_i|$.   We  write $x\prec  y$  if
furthermore $x\neq y$.   We define $x\wedge y$ the MRCA  of $x,y\in \bt$
as $x$ if $x\preccurlyeq y$, as $x  \wedge y$ if $x,y\in \bt_i$ for some
$i\in I$  (with the MRCA for  the rooted compact real  tree $\bt_i$), as
$h_i\wedge h_j$ if $x\in \bt_i$ and $y\in \bt _j$ for some $i\neq j$. We
define the height of a vertex $x\in \bt$ as
\[
h_\bff(x)= d_\bff(x,  \rho_0 \wedge
x)-  d_\bff(\rho_0, \rho_0  \wedge x).
\]
Notice  that   the      definition     of  the   height function $h_\bff$    for      a     forest
$\bff=(h_i,\bt_i)_{i\in I}\in \T_1$ is  different than the height function of the
tree $\bt=\bff_{(-\infty,0]}$ viewed as a tree in $\T$,  as in the former  case the root $\rho_0$ is  viewed as a
distinguished vertex above  the semi-infinite branch (all elements of this semi-infinite branch have negative heights for $h_\bff$ whereas all the heights are nonnegative for $h_\bt$).

\subsubsection{Coding a forest by a contour function}
Construction of a tree of the type $\bff_{[0, +\infty)}$ via a contour
function as in Section \ref{sec:CRT} is already present  in
\cite{d:crti} and in  Section
7.4 from  \cite{alw:ip}. This construction is recalled in section
\ref{sec:local_times}. We now present a   construction   of  a   tree  of   the  type
$\bff_{(-\infty,0]}$ via a contour function as in Section \ref{sec:CRT}.
Let $\ce_1$ be the set of  continuous functions $g$ defined on $\R$ such
that $g(0)=0$ and
$\liminf_{x\rightarrow -\infty  } g(x)= \liminf_{x\rightarrow  +\infty }
g(x)=-\infty$.
For such a  function, we still consider the  pseudo-metric $d_g$ defined
by \reff{eq:dist_g}  (but for $s,t\in\R$)  and define the tree  $T^-_g$ as
the quotient space on $\R$ induced  by this pseudo-metric.  We set $p_g$
as the canonical projection from $\R$ onto $T^-_g$.

\begin{lem}\label{lem:TginT1}
  Let $g\in  \ce_1$. The  triplet $(T^-_g,d_g,p_g(0))$  is a complete locally compact rooted real tree with a unique semi-infinite branch.
\end{lem}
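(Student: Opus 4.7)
The plan is to verify successively that (a) $(T_g, d_g, p_g(0))$ is a rooted real tree, (b) closed balls in $T_g$ are compact, yielding both local compactness and completeness, and (c) $T_g$ has a unique semi-infinite branch; I expect (c) to be the main difficulty.

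Claim (a) is an extension of Section \ref{sec:CRT} to the non-compact setting. The canonical projection $p_g : \R \to T_g$ is continuous, since for $s < s'$ one has $d_g(s, s') = (g(s) - m_g(s,s')) + (g(s') - m_g(s,s')) \leq 2\, \mathrm{osc}_{[s,s']}(g)$, which tends to $0$ by continuity of $g$. For $s,t \in \R$, the unique isometric map from $[0, d_g(s,t)]$ to $T_g$ joining $p_g(s)$ to $p_g(t)$ is built from any $t^* \in [s \wedge t, s \vee t]$ attaining $m_g(s,t)$, exactly as in the compact case; the verification of Definition \ref{defi:realtree} is purely local and uses only continuity of $g$ on each bounded interval.

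For (b), fix $x = p_g(s_0) \in T_g$ and $r > 0$, and set $S = \{s \in \R : d_g(s, s_0) \leq r\}$. From $m_g(s, s_0) \leq \min(g(s), g(s_0))$ we deduce $d_g(s, s_0) \geq |g(s) - g(s_0)|$, so $g(s) \geq g(s_0) - r$ on $S$. If $S$ were unbounded above, pick $s_n \in S$ with $s_n \to +\infty$; by $\liminf_{s \to +\infty} g(s) = -\infty$ there exists $t > s_0$ with $g(t)$ arbitrarily small, and for $n$ so large that $t \in (s_0, s_n)$ we obtain $d_g(s_0, s_n) = g(s_0) + g(s_n) - 2 m_g(s_0, s_n) \geq 2 g(s_0) - r - 2 g(t)$, which exceeds $r$ as soon as $g(t)$ is sufficiently negative, a contradiction. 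The case $s_n \to -\infty$ is handled symmetrically using $\liminf_{s \to -\infty} g(s) = -\infty$, so $S$ is bounded in $\R$. Since $p_g$ is continuous, $p_g(\overline{S})$ is compact; as $\bar B(x, r) \subseteq p_g(\overline{S})$ is closed in $T_g$, it is compact. Local compactness follows, and completeness as well since any Cauchy sequence is bounded and hence contained in such a ball.

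For (c), the condition $\liminf_{s \to +\infty} g(s) = -\infty$ guarantees that $\tau(h) := \inf\{s \geq 0 : g(s) \leq -h\}$ is finite for every $h \geq 0$, with $g(\tau(h)) = -h$ by continuity. Setting $\rho_h := p_g(\tau(h))$, the monotonicity of $\tau$ together with the fact that $g \geq -h_2$ on $[0, \tau(h_2)]$ gives $m_g(\tau(h_1), \tau(h_2)) = -h_2$ for $h_1 < h_2$, whence $d_g(\rho_{h_1}, \rho_{h_2}) = |h_1 - h_2|$. Thus $\cs := \{\rho_h : h \geq 0\}$ is an isometric copy of $[0, +\infty)$ inside $T_g$, and in particular contains no branching point. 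It remains to identify $\cs(T_g)$ with $\cs$: removing $\rho_h$ for $h > 0$ leaves the unbounded set $\{\rho_{h'} : h' > h\}$ in a component not containing $\rho_0$, yielding $\cs \subseteq \cs(T_g)$, while for $y = p_g(s_0) \notin \cs$ one shows that $s_0$ lies strictly inside a bounded excursion interval of $g$ above its running infimum from the origin, so that the component of $T_g \setminus \{y\}$ not containing $\rho_0$ is the image under $p_g$ of a bounded sub-interval of $\R$, hence bounded by the argument of (b). This delineation of the excursion interval attached to a point off the spine is the main technical step; once it is done, the uniqueness of the semi-infinite branch is immediate from $\cs$ being isometric to a half-line.
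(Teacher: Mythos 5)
Your proof is correct, but it follows a genuinely different route from the paper's. The paper decomposes $g$ into its excursions above the running infimum $\underline g(x)=\inf_{[x\wedge 0, x\vee 0]}g$, packages them into a forest $\bff=((h_i,T_{g_i}),i\in I)$, asserts that $\bff_{(-\infty,g(0)]}$ is root-preserving isometric to $T_g$, and then reduces the whole statement to checking the finiteness condition of Definition \ref{def:T_1} (via uniform continuity of $g-\underline g$ on compacts), so that Lemma \ref{lem:loc-comp} and Corollary \ref{cor:bff} deliver completeness, local compactness and the identification with trees having a unique semi-infinite branch. You instead work directly on $(T_g,d_g)$: you get the real-tree axioms locally, you prove the stronger statement that $T_g$ is \emph{proper} (closed balls compact) by showing $p_g^{-1}(\bar B(x,r))$ is bounded in $\R$ using the two $\liminf$ hypotheses, and you build the spine explicitly from the first-passage times $\tau(h)$, which is a clean way to see it is isometric to a half-line without branching points. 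What each approach buys: the paper's reduction is short because the forest machinery already exists and is reused elsewhere, and it makes the $\T_1$ structure of $T_g$ explicit; yours is self-contained, avoids the counting of large excursions in height-windows entirely (properness replaces it), and yields completeness and local compactness in one stroke. Note, though, that your step (c) cannot avoid the excursion decomposition: identifying $\cs(T_g)$ with your $\cs$ requires exactly the paper's intervals $(a_i,b_i)$ and the fact that they are bounded (which again uses $\underline g\to-\infty$), plus the verification that for $u\notin[a_i,b_i]$ the point $p_g(s_0)$ does not lie on the geodesic from $p_g(0)$ to $p_g(u)$ (a short computation with $m_g(0,s_0)=g(a_i)<g(s_0)$ and $m_g(s_0,u)\le g(a_i)$). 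You correctly flag this as the main technical step but leave it at the level of a sketch — which is comparable to the paper's own ``it is elementary to check'' for the isometry with the grafted forest — so the two proofs end up at a similar level of detail on their respective crux points.
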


When there is no possible confusion, we write $T_g$ for $T^-_g$. 

\begin{proof}
  We  define the  infimum  function  $\underline g(x)$  on  $\R$ as  the
  infimum       of       $g$       between      $0$       and       $x$:
  $\underline  g(x)= \inf_{[x  \wedge 0,  x \vee  0]} g$.   The function
  $g-\underline    g$    is    non-negative    and    continuous.    Let
  $((a_i, b_i), i\in I)$ be  the excursion intervals of $g-\underline g$
  above  $0$.   Because  of the   hypothesis  on  $g$,   the  intervals
  $(a_i,  b_i)$  are   bounded.  For  $i\in  I$,   set  $h_i=g(a_i)$  and
  $g_i(x)=g((a_i+x)\wedge b_i) - h_i$ so that $g_i\in \ce$. Consider the
  forest $\bff=((h_i,T_{g_i}), \, i\in I) $.

  It         is         elementary         to         check         that
  $(\bff_{(-\infty ,  g(0)]}, d_\bff, g(0))$ and  $(T_g,d_g,p_g(0))$ are
  root-preserving and isometric.  To conclude,  it is  enough to  check that
  $\bff\in\T_1$. First remark that, by  definition, $h_i\le 0$ for every
  $i\in         I$.           Let         $r>0$          and         set
  $r_\text{g}=\inf\{x,   \,   \underline   g   (x)\geq   g(0)-r\}$   and
  $r_\text{d}=\sup\{x, \, \underline g (x)\geq g(0)-r\}$. Because of the
  hypothesis  on $g$,  we have  that $r_\text{g}$  and $r_\text{d}$  are
  finite.     By     continuity    of     $g-    \underline     g$    on
  $[r_\text{g}, r_\text{d}]$,  we deduce  that for  any $\varepsilon>0$,
  the                                                                set
  $\{i\in I; \, (a_i, b_i)\subset  [r_\text{g}, r_\text{d}] \text{ and }
  \sup_{(a_i,     b_i)}     (g-\underline    g)     >\varepsilon     \}$
  is   finite.    Since   this   holds    for   any   $r>0$   and   that
  $H(T_{g_i})=\sup_{(a_i, b_i)} (g-\underline g) $  for all $i\in I$, we
  deduce that $\bff\in\T_1$. This concludes the proof.
\end{proof}

\subsubsection{Genealogical tree of an extant population}
\label{sec:genealogy}
For a tree $\bt\in \T$ or $\bt\in \T_1$ (recall that we identify a forest $\bff\in\T_1$ with the tree $\bt=\bff_{(-\infty,0]}$ with a different definition for the height function) and $h\geq 0$, we define $\cz_h(\bt)=\{x\in \bt, \,
h_\bt(x)=h\}$ the set of vertices of $\bt$ at level $h$ also called the
extant population at time $h$, and 
the genealogical tree of the vertices
of $\bt$ at level $h$ by:
\begin{equation}
   \label{eq:def-G}
\cg_h(\bt)=\{x\in \bt; \, \exists y\in \cz_h(\bt) \text{ such that } x
\preccurlyeq y\}.
\end{equation}
For $\bff\in\T_1$, we write $\cg_h(\bff)$ for
$\cg_h(\bff_{(-\infty,0]})$; 

%

\section{Ancestral process}
\label{sec:ancestral}

Usually,  the ancestral  process  records the  genealogy  of $n$  extant
individuals  at time  0 picked  at  random among  the whole  population.
Using the ideas of \cite{ap:cbpmb}, we  are able to describe in the case
of a Brownian  forest the genealogy of all extant  individuals at time 0
by a simple Poisson point process on $\R^2$.

\subsection{Construction of a tree from a point measure}\label{sec:constructionT}

\begin{defi}
\label{def:anc}
A point process $\ca(dx,d\zeta)=\sum_{i\in
  \ci}\delta_{(x_i,\zeta_i)}(dx,d\zeta)$ on $\R^*\times (0,+\infty)$ is
said to be an ancestral process if 
\begin{itemize}
\item[(i)] $\forall i,j\in \ci $, $i\ne j\Longrightarrow x_i\ne x_j$.
\item[(ii)] $\forall a,b\in\R$, $\forall \varepsilon>0$, $\ca([a,b]\times[\varepsilon,+\infty))<+\infty$.
\item[(iii)] 
  $\sup\{\zeta_i,x_i>0\}=+\infty$ if $\sup_{i\in \ci}x_i=+\infty$; and
  $\sup\{\zeta_i,x_i<0\}=+\infty$ if $\inf_{i\in \ci}x_i=-\infty$. 
\end{itemize}
\end{defi}

Let $\ca=\sum_{i\in \ci}\delta_{(x_i,\zeta_i)}$ be a point process on
$\R^*\times [0,+\infty)$ satisfying (i) and (ii) from Definition
\ref{def:anc}. We shall associate with this ancestral process a
genealogical tree. Informally the genealogical tree is constructed as
follows. We view this process as a sequence of vertical segments in
$\R^2$, the tips of the segments being the $x_i$'s and their lengths
being the $\zeta_i$'s. We then attach the bottom of each segment such
that $x_i>0$ (resp. $x_i<0$) to the first left (resp. first right)
longer segment or to the half line $\{0\}\times (-\infty,0]$ if such a
segment does not exist. This gives a (unrooted, non-compact) real tree
that may not be complete. See also Figure \ref{fig:ancestral-intro} for an example.

Let us turn to a more formal definition. 
Let us  denote by  $\ci  ^\text{d}=\{i\in\ci,\  x_i>0\}$  and
$\ci ^\text{g}=\{i\in\ci,\ x_i<0\}=  \ci\setminus\ci^\text{d}$.  We also
set $\ci_0=\ci\sqcup \{0\}$, $x_0=0$  and $\zeta_0=+\infty$. We set, for
every  $i\in\ci_0$, $S_i=\{x_i\}\times(-\zeta_i,0]$ the  vertical
segment   in    $\R^2$   that    links   the   points    $(x_i,0)$   and
$(x_i,-\zeta_i)$.  Notice that we  omit  the lowest  point  of the  vertical
segments. Finally we define
\begin{equation}
   \label{eq:defTT}
\mathfrak{T}=\bigsqcup_{i\in\ci_0}S_i.
\end{equation}
We now define a distance on  $\mathfrak{T}$. We first define the
distance between  leaves of $\mathfrak{T}$, \textit{i.e.}~points $(x_i,0)$ with
$i\in\ci_0$, then we extend it to every point of $\mathfrak{T}$. For
$i,j\in\ci_0$ such that $x_i<x_j$, we set 
\begin{equation}\label{eq:dist-anc}
d((x_i,0),(x_j,0))=2 \max\{\zeta_k, \,x_k \in J(x_i,x_j)\},
\end{equation}
where, for  $x<y$, $J(x,y)=  (x,y] $ (resp.  $[x,y)$, resp.  $[x,y]\backslash \{0\}$) if
$x\geq 0$ (resp. $y\leq 0$, resp. $x<0$ and $y> 0$), with the convention
$\max\emptyset=0$.  For $u=(x_i,a)\in  S_i$ and  $v=(x_j,b)\in S_j$,  we
set, with $r=\frac{1}{2}d((x_i, 0), (x_j, 0))$:
\begin{equation}\label{eq:dist-anc2}
d(u,v)=|a-b|\ind_{\{x_i=x_j\}}+ (|a-r|+|b-r|)\ind_{\{x_i\neq x_j\}}.
\end{equation}
See Figure \ref{fig:dist-from-A}  for an example.  It is  easy to verify
that $d$ is a distance  on $\mathfrak{T}$. Notice that $\mathfrak{T}$ is
not compact in particular because  of the infinite half-line attached to
$(0,0)$. In order to stick to the framework of Section \ref{sec:forest},
the origin  $(0,0)$ will  be the  distinguished point  in $\mathfrak{T}$
located at height $h=0$.

\begin{figure}[H]
\begin{center}
\psfrag{xi}{$x_i$}
\psfrag{xj}{$x_j$}
\psfrag{a}{$a$}
\psfrag{b}{$b$}
\psfrag{u}{$u$}
\psfrag{v}{$v$}
\psfrag{r-a}{$r-a$}
\psfrag{r-b}{$r-b$}
\includegraphics[width=13cm]{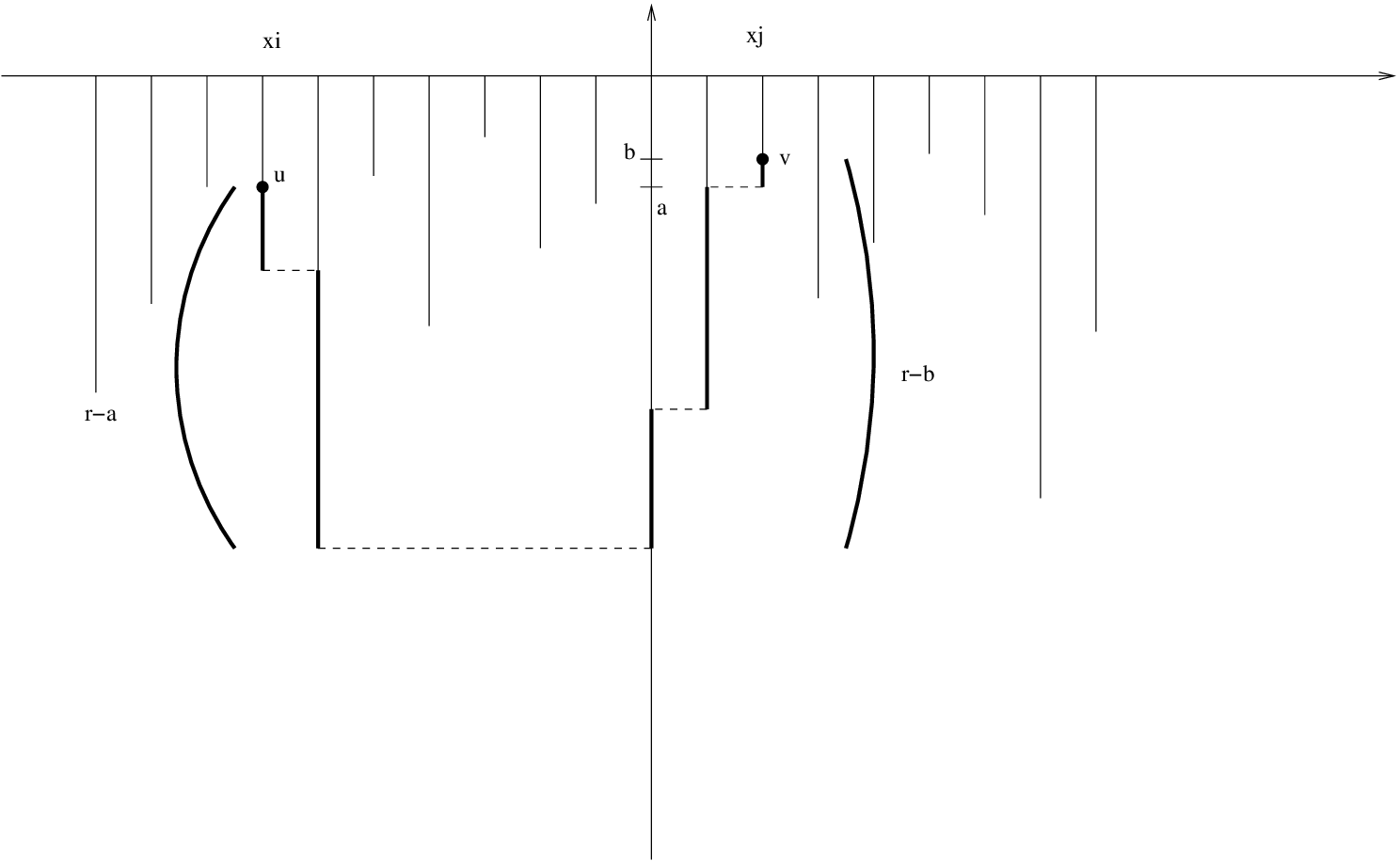}
\caption{An example of the distance $d(u,v)$ defined in \reff{eq:dist-anc}}
\label{fig:dist-from-A}
\end{center}
\end{figure}

Finally,   we  define   $\mathfrak{T}(\ca)$, with the
metric $d$, as   the
completion of the metric space $(\mathfrak{T},d)$.

\begin{rem}
\label{rem:identif}
For every $i\in\ci^\text{d}$, we set $i_\ell$ the index in $\ci_0$ such that
$$x_{i_\ell}=\max\{x_j,\ 0\le x_j<x_i\mbox{ and }\zeta_j>\zeta_i\}.$$
Remark that $i_\ell$ is well defined since there are only a finite
number of indices $j\in\ci_0$ such that $x_j \in [0, x_i)$ and $\zeta_j>\zeta_i$. 
Similarly, for $i\in\ci^\text{g}$, we set $i_r$ the index in $\ci_0$ such that
$$x_{i_r}=\min\{x_j,\ x_i<x_j\le 0\mbox{ and }\zeta_j>\zeta_i\}.$$
The distance $d$ identifies the point $(x_i,-\zeta_i)$ (which does not
belong to $\mathfrak{T}$ by definition) with the point
$(x_{i_\ell},-\zeta_i)$ if $x_i>0$ and with the point
$(x_{i_r},-\zeta_i)$ if $x_i<0$ as illustrated on the right-hand
side of Figure \ref{fig:ancestral1}. 
\end{rem}

\begin{prop}
\label{proof:T(A)}
Let $\ca$ be an ancestral process. The tree
$({\mathfrak{T}}(\ca),d,(0,0))$ is a complete and locally compact rooted real tree with a unique semi-infinite branch and the associated forest belongs to $\T_1$.
\end{prop}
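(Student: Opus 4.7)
The plan is to verify the four claimed properties---real tree, completeness, local compactness, and unique semi-infinite branch---and then invoke the correspondence with $\T_1$ established in Section~\ref{sec:inf-branch}. First, I would exhibit $(\mathfrak{T},d)$ as an increasing union of finite compact binary real trees: for each finite $F\subset\ci_0$ containing $0$, the closure of $\bigsqcup_{i\in F}S_i$ in $\mathfrak{T}(\ca)$, equipped with the identifications of Remark~\ref{rem:identif}, is a compact binary real tree whose branching structure is dictated by the parent indices $i_\ell,i_r$. A growing union of real trees is a real tree and so is its completion, with explicit geodesics going down one segment, across identifications, and up another; hence $\mathfrak{T}(\ca)$ is a complete real tree, completeness being built in by construction.

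For local compactness, the task is to produce a finite $\varepsilon$-net of the closed ball $\overline{B}((0,0),R)$ for every $\varepsilon>0$. The geometric input is the formula $d((0,0),u)=2r_i-|a|$ for $u=(x_i,a)\in S_i$ with $i\neq 0$, where $r_i=\max\{\zeta_k:x_k\in J(0,x_i)\}$; this forces $r_i\le R$ and $\zeta_i\le R$ on the ball. Condition~(iii) of Definition~\ref{def:anc} then prevents the set $\{i:r_i\le R\}$ from having $|x_i|\to\infty$ (otherwise $\sup\{\zeta_k:x_k>0\}\le R<+\infty$ would contradict~(iii)), confining such indices to a bounded interval $[-M,M]$; condition~(ii) gives that only finitely many of them have $\zeta_i\ge\varepsilon$. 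A telescoping argument along the parent chain $j,j_\ell,(j_\ell)_\ell,\ldots$ shows that any short segment $S_j$ with $\zeta_j<\varepsilon$ is within distance $\zeta_{j'}<\varepsilon$ of its first ancestor $S_{j'}$ of height $\ge\varepsilon$ (or of $S_0$), so it is absorbed by the long segments. Together with a finite $\varepsilon$-cover of the long segments and of the compact arc $S_0\cap\overline{B}((0,0),R)$, this yields the desired finite $\varepsilon$-net.

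Finally, to identify the unique semi-infinite branch, I would show $\cs(\mathfrak{T}(\ca))=S_0=\{0\}\times(-\infty,0]$: removing $(0,-h)$ with $h>0$ leaves the unbounded ray $\{0\}\times(-\infty,-h)$ in a component not containing $(0,0)$, whereas removing a point of some $S_i$ with $i\ne 0$ leaves only bounded components above (coded by segments with $\zeta\le \zeta_i$ and $x$ in a bounded sub-interval, hence of diameter at most $2\zeta_i$). Since $S_0$ has no branching point, the semi-infinite branch is unique, and the assertion that the associated forest lies in $\T_1$ follows from the characterization preceding Corollary~\ref{cor:bff}. \textbf{Main obstacle.} The local compactness step is the technical heart of the proof; the delicate point is the use of condition~(iii) of Definition~\ref{def:anc} to rule out sequences of segments of bounded height but with $|x_i|\to\infty$, and the telescoping bound along the ancestor chain is what guarantees that infinitely many accumulating short segments cannot prevent the existence of a finite cover.
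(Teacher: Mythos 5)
Your overall architecture matches the paper's: realize $(\mathfrak{T},d)$ as an increasing union of finite real trees, deduce local compactness from conditions (ii)--(iii) of Definition \ref{def:anc}, identify $\cs(\mathfrak{T}(\ca))=S_0$, and then invoke the correspondence with $\T_1$. The one step that does not survive as written is the claim that for an \emph{arbitrary} finite $F\subset\ci_0$ containing $0$ the closure of $\bigsqcup_{i\in F}S_i$ is a compact real tree. With the metric induced from $\mathfrak{T}$ this set is in general not even connected: if $i,j\in F$ but the index $k$ realizing $\max\{\zeta_m:\,x_m\in J(x_i,x_j)\}$ lies outside $F$, the geodesic between $(x_i,0)$ and $(x_j,0)$ passes through a portion of $S_k$ not contained in $\bigsqcup_{i\in F}S_i$; if you instead recompute the distance using only the atoms of $F$ you change the metric. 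The paper avoids exactly this by first restricting to atoms with $x_i\in[-m,m]$ and then exhausting them along decreasing values of $\zeta$, which guarantees that each $d_n$ is the restriction of $d_{n+1}$. The fix for your version is to take $F$ closed under the parent maps $i\mapsto i_\ell,\,i_r$ of Remark \ref{rem:identif} (each atom has finitely many ancestors by condition (ii), so such $F$ still exhaust $\ci_0$). Also, ``binary'' need not hold for a general ancestral process, since several atoms may share the same value of $\zeta$; this is harmless here.

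Your local compactness argument is genuinely different from the paper's and is, to my mind, at least as transparent. The paper shows that every bounded sequence of $\mathfrak{T}$ admits a Cauchy subsequence, splitting according to whether infinitely many terms lie in a single segment; you instead exhibit a finite $\varepsilon$-net of each closed ball, using (iii) to confine the relevant atoms to a compact interval $[-M,M]$, (ii) to reduce to finitely many ``long'' segments there, and the telescoping identity $(\zeta_{j_0}-|a|)+(\zeta_{j_1}-\zeta_{j_0})+\cdots+(\zeta_{j_{m-1}}-\zeta_{j_{m-2}})=\zeta_{j_{m-1}}-|a|<\varepsilon$ along the ancestor chain to absorb the short segments into the long ones. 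That identity is correct (the chain has strictly increasing heights and stops at the first ancestor of height at least $\varepsilon$), and total boundedness of balls together with completeness gives properness, hence local compactness. Your treatment of the semi-infinite branch is more explicit than the paper's one-line assertion and is correct; note only that the characterization of $\T_1$ via complete locally compact trees with a unique semi-infinite branch is the discussion \emph{following} Corollary \ref{cor:bff} rather than preceding it.
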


We shall call ${\mathfrak{T}}(\ca)$ the tree associated with the ancestral
process $\ca$. 

\begin{proof}
  By   construction    of   $(\mathfrak{T},d)$    see   \reff{eq:defTT},
  \reff{eq:dist-anc} and  \reff{eq:dist-anc2}, it is easy  to check that
  $\mathfrak{T}$ is connected 
 and $d$ satisfies the so-called ``4-points condition''
  (see Lemma  3.12 in  \cite{e:prt}).  To conclude,  use that  those two
  conditions   characterize   real   trees    (see   Theorem   3.40   in
  \cite{e:prt}).  This gives  that $(\mathfrak{T},  d)$ as  well as  its
  completion are real trees.
By construction of $\mathfrak{T}$, it is easy to check that
$\mathfrak{T}(\ca)$ has a unique semi-infinite branch. \medskip

Let us now prove that $\mathfrak{T}(\ca)$ is locally compact. 
Let $(y_n, n\in \N)$ be a bounded sequence of
$\mathfrak{T}$. 

On  one  hand, let  us  assume  that there  exists  $i\in  \ci_0$ and  a
sub-sequence  $(y_{n_k},  k\in  \N)$  such  that  $y_{n_k}$  belongs  to
$S_i=\{x_i\}\times (-\zeta_i, 0]$.  Since,  for $i\in \ci$, there exists
a unique $j\in \ci_0$ such  that $S_i\cup \{(x_j,-\zeta_i)\}$ is compact
in  $(\mathfrak{T},d)$, see  Remark  \ref{rem:identif},  and for  $i=0$,
$S_0=\{0\}\times  (-\infty,0]$,  we  deduce that  the  bounded  sequence
$(y_{n_k},    k\in     \N)$    has    an    accumulation     point    in
$S_i\cup     \{(x_j,-\zeta_i)\}$     if     $i\in     \ci$     or     in
$\{0\}\times (-\infty , 0]$ if $i=0$.

On the  other hand,  let us assume  that for all  $i\in \ci_0$  the sets
$\{n, \,  y_n\in S_i\}$ are  finite. For  $n\in \N$, let  $i_n$ uniquely
defined  by $y_n\in  S_{i_n}$. Since  $(y_n,  n\in \N)$  is bounded,  we
deduce from Conditions (ii-iii) in Definition \ref{def:anc}, that the
sequence $(x_{i_n}, n\in  \N)$ is bounded in $\R$.  In particular, there
is  a sub-sequence  such that  $(x_{i_{n_k}}, k\in  \N)$ converges  to a
limit  say $a$.  Without  loss of  generality, we  can  assume that  the
sub-sequence  is  non-decreasing.  We  deduce  from  Condition  (ii)  in
Definition \reff{def:anc} that $\lim_{\varepsilon\downarrow 0}
  \max\{ \zeta_i, a-\varepsilon<x_i<a\}=0$. This implies, thanks to
  Definition \reff{eq:dist-anc}, that $(\{x_{i_{n_k}}\}\times\{0\}, k\in
  \N)$ is Cauchy in $\mathfrak{T}$ and using (ii) again that
  $\lim_{k\rightarrow+\infty } \zeta_{i_{n_k}}=0$. Then use that
\[
d(y_{n_k}, y_{n_{k'}})\leq  \zeta_{i_{n_k}} + \zeta_{i_{n_{k'}}}+ d
((x_{n_k}, 0), (x_{n_{k'}}, 0))
\]
to conclude that the $(y_{n_k}, k\in \N)$ is Cauchy in
$\mathfrak{T}$. 

We deduce that all bounded sequence  in $\mathfrak{T}$ has a Cauchy
sub-sequence.  This   proves  that  $\mathfrak{T}(\ca)$,  the
completion  of 
$\mathfrak{T}$ is locally compact.
\end{proof}

\begin{rem}
In the proof of Proposition \ref{proof:T(A)}, Conditions (i) and
(ii) in Definition \ref{def:anc} insure that
${\mathfrak{T}}(\ca)$ is a tree and  Conditions (ii) and  (iii) that 
${\mathfrak{T}}(\ca)$ is locally compact. 
\end{rem}

\subsection{The ancestral process of the Brownian forest}
\label{sec:apbf}

Let $\theta\geq 0$.  Let
$\cn(dh,d\varepsilon,de)=\sum_{i\in
  I}\delta_{(h_i,\varepsilon_i,e_i)}(dh,d\varepsilon,de)$ 
be, under $\P^ {(\theta)}$, a Poisson point measure on $\R\times\{-1,1\}\times\ce$ with intensity
$\beta    dh\,    (\delta_{-1}(d\varepsilon)+\delta_{1}(d\varepsilon))\,
n^{(\theta)}(de)$,
and  let  $\cf^{(\theta)} =((h_i,\tau_i), \, i\in  I)$ be  the  associated
Brownian forest where $\tau_i=T_{e_i}$ is the tree associated with the excursion $e_i$, see Section \ref{sec:CRT}. As  explained in Section \ref{sec:local_times}, this
Brownian forest models the evolution of a stationary population directed
by the branching mechanism $\psi_\theta$ defined in \reff{eq:psi}.

We want  to describe the genealogical  tree of the extant  population at
some  fixed time,  say 0.  The looked  after genealogical  tree is  then
$\cg_0(\cf^{(\theta)})$   defined   by   \reff{eq:def-G}.  To   describe   the
distribution of this  tree, we use an ancestral process  as described in
the  previous   subsection.  We   first  construct  a   contour  process
$(B_t,t\in\R)$ (obtained  by the  concatenation of two  independent Brownian motions
distributed as $B^{(\theta)}$)  which  codes  for the  tree  $\cf^{(\theta)}_{(-\infty,0]}$  (see
Section \ref{sec:forest} for the notations). The supplementary variables
$\varepsilon_i$ are needed  at this point to decide if  the tree $\bt_i$
is located on the left or on  the right of the infinite spine. The atoms
of the  ancestral process  are then  the pairs formed  by the  points of
growth of  the local time at  0 of $B$  and the depth of  the associated
excursion of $B$ below 0.

\subsubsection{Construction of the contour process}\label{sec:contour}
Let $\theta\geq 0$. Set $\ci=\{i\in I; \,  h_i<0\}$. 

For every $i\in \ci$, we set:
\[
a_i=\sum_{j\in \ci}\ind_{\{\varepsilon
  _j=\varepsilon_i\}}\ind_{\{h_j<h_i\}}\sigma(e_j)\quad\mbox{and}\quad
b_i=a_i+\sigma (e_i),
\]
where we  recall that  $\sigma(e_i)$ is the  length of  excursion $e_i$.
For every $t\ge 0$, we  set $i_t^\text{d}$ (resp.  $i_{t}^\text{g}$) the
only    index    $i\in\ci$    such    that    $\varepsilon_i=1$
(resp.  $\varepsilon_i=-1$) and  $a_i\le  t<b_i$.  Notice  that
$i_t^\text{d}$  and $i_{t}^\text{g}$ are a.s.~well defined but on a
Lebesgue-null set of values of $t$. 
We      set      $B^\text{d}=(B^\text{d}_t,     {t\ge      0})$      and
$B^\text{g}=(B^\text{g}_t, {t\ge 0})$ where for $t\geq 0$:
\[
B^\text{d}_t=h_{i_t^\text{d}}+e_{i_t^\text{d}}(t-a_{i_t^\text{d}})
\quad\mbox{and}\quad 
B^\text{g}_t=h_{i_{t}^\text{g} }+e_{i_{t}^\text{g}}(\sigma(e_{i_{t}^\text{g}})-(t-a_{i_{ t}^\text{g}})).
\]
We   deduce   from   Corollary  \ref{cor:b+-b-}   that   the   processes
$B^\mathrm{d}$ and  $B^\mathrm{g}$ are two independent  Brownian motions
distributed as $B^{(\theta)}$.
We define the process $B=(B_t,t\in\R)$ by
$B_t=B^\text{d}_t\ind_{\{t>0\}} + B^\text{g}_{-t}\ind_{\{t<0\}} $.
By construction,  the process  $B$ indeed  codes for  the tree
$\cf^{(\theta)}_{(-\infty,0]}$.

\subsubsection{The ancestral process}
 \label{sec:PPM}
Let   $(L_t^{\ell},t\ge  0)$ be the
local   time   at   0   of   the  process   $B^{\ell}$, where  $\ell\in\{\text{g},  \text{d}\}$.    We   denote   by
$((\alpha_i,\beta_i), \, i\in \ci^{\ell})$  the excursion intervals  of $B^{\ell}$
below 0, omitting the last  infinite excursion  if any,  and,  for every
$i\in                  \ci^{\ell}$,                 we                  set
$\zeta_i=-\min\{B_s^{\ell},\ s\in(\alpha_i,\beta_i)\}$.

We consider the  point measure on $\R\times \R_+$ defined by:
\[
\ca^\cn(du,d\zeta)=\sum_{i\in\ci^\text{d}}\delta_{(L^\text{d}_{\alpha_i},
  \zeta_i)}(du,d\zeta)+\sum_{i\in\ci^\text{g}}
\delta_{(-L^\text{g}_{\alpha_i},  \zeta_i)}(du,d\zeta).
\]

\begin{figure}[H]
\begin{center}
\psfrag{Bd}{$B_t^{d}$}
\psfrag{Bg}{$B^{g}_{-t}$}
\includegraphics[width=13cm]{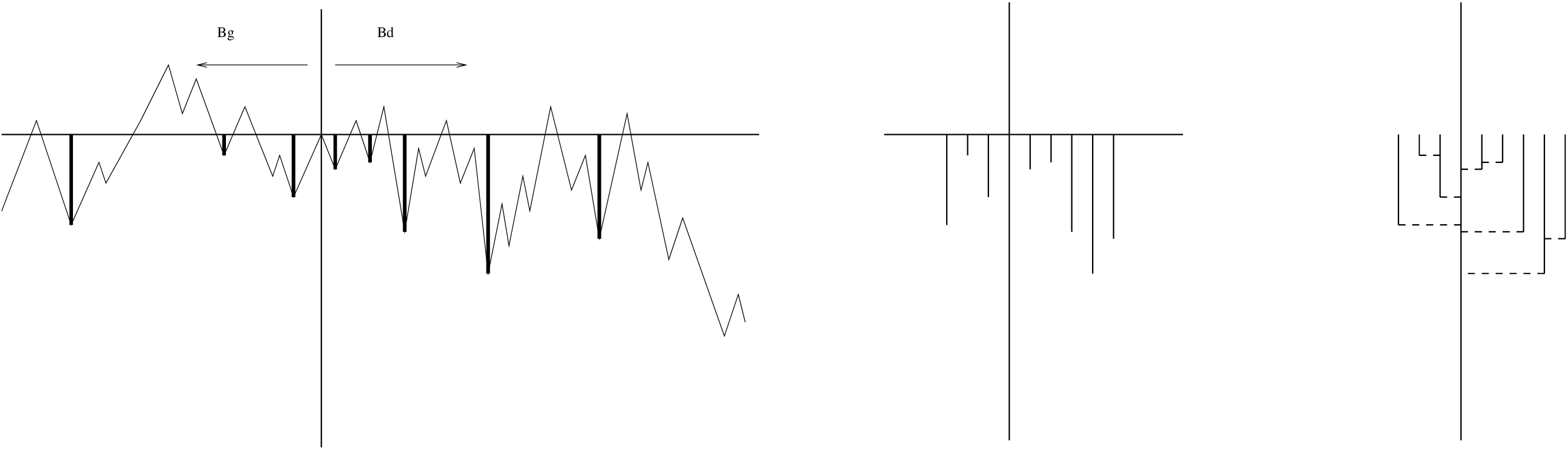}
\caption{The Brownian motions with drift, the ancestral process and the associated genealogical tree}\label{fig:ancestral1}
\end{center}
\end{figure}
See  Figure \ref{fig:ancestral1}  for  a representation  of the  contour
process $B$, the  ancestral process $\ca^\cn$ and  the genealogical tree
$\cg_0(\cf^{(\theta)})$.   In this figure,  the  horizontal  axis  represents the  time  for
Brownian motion  on the left-hand figure  whereas it is in  the scale of
local time for the ancestral process on the two right-hand figures. This
will always be the case in the  rest of the paper dealing with ancestral
processes.\medskip

Let $[-\eg,\ed]$ be the closed support of the measure
$\ca^\cn(du,\R_+)$:
\[
\ed  =\inf\{u\ge 0,\ \ca([u,+\infty)\times\R_+)=0\}
\quad \text{and}\quad
\eg  =\inf\{u\ge 0,\ \ca((-\infty,-u]\times\R_+)=0\},
\]
with the convention that $\inf \emptyset=+\infty $. Notice that, for $\ell\in\{\text{g},\text{d}\}$, we also have $E_\ell=L_\infty^\ell$.
We now give the distribution of the ancestral process $\ca^\cn$.
Recall  $c_\theta$  defined by \reff{eq:def-c}.

\begin{prop}\label{prop:ancestral}
  Let  $\theta\geq  0$.   Under $\P^{(\theta)}$,  the  random  variables
  $\eg,\ed$ are independent and exponentially distributed with parameter
  $2\theta$   (and   mean   $1/2\theta$)  with   the   convention   that
  $\ed=\eg=+\infty  $ if  $\theta=0$.  Under $\P^{(\theta)}$ and
  conditionally  given $(\eg,\ed)$, 
  the ancestral process $\ca^\cn(du, d\zeta)$ is a Poisson point measure
  with intensity:
\[
\ind_{(-\eg,\ed)}(u)\, du\, |c'_\theta(\zeta)|d\zeta. 
\]
\end{prop}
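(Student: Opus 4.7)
The plan is to reduce the proof to the standard excursion theory of a Brownian motion with drift at level $0$, applied separately to the two independent coding processes $B^{\mathrm{d}}$ and $B^{\mathrm{g}}$. By the Proposition stated just before, $B^{\mathrm{d}}$ and $B^{\mathrm{g}}$ are two independent copies of $B^{(\theta)}$; the restriction of $\ca^\cn$ to $(0,\infty)\times\R_+$ is a measurable functional of $B^{\mathrm{d}}$ alone, and its restriction to $(-\infty,0)\times\R_+$ is a measurable functional of $B^{\mathrm{g}}$ alone. In particular the two halves of $\ca^\cn$ are independent, as are $\ed=L^{\mathrm{d}}_\infty$ and $\eg=L^{\mathrm{g}}_\infty$, so by symmetry it is enough to establish the claim about the right half, starting from the single Brownian motion with drift $B^{\mathrm{d}}\sim B^{(\theta)}$.

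By It\^o's excursion theory, the excursions of $B^{(\theta)}$ below $0$, indexed by the local time at $0$, form a Poisson point measure with intensity $du\otimes\tilde n^{(\theta)}(de)$, where $\tilde n^{(\theta)}$ denotes the It\^o measure of negative excursions of $B^{(\theta)}$, taken with the same local-time normalization as the measure $n^{(\theta)}$ of positive excursions appearing in \reff{eq:normlalisation}. When $\theta>0$, the negative drift forces $\tilde n^{(\theta)}$ to place positive mass on $\{\zeta=+\infty\}$, and $\ed$ is then the local time at which the first infinite negative excursion of $B^{\mathrm{d}}$ occurs, hence exponential with parameter $\tilde n^{(\theta)}[\zeta=+\infty]$. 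Conditionally on $\ed$, the finite excursions of $B^{\mathrm{d}}$ below $0$ form, by standard Poisson thinning, a point measure on $[0,\ed]\times(0,+\infty)$ with intensity $du\otimes\tilde n^{(\theta)}(\zeta\in d\zeta,\,\zeta<+\infty)$. For $\theta=0$ there are no infinite excursions, so $\ed=+\infty$ a.s.\ and the same description holds on $[0,+\infty)\times(0,+\infty)$.

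To identify $\tilde n^{(\theta)}$, I would use the scale function $\phi(x)=e^{2\beta\theta x}$ of $B^{(\theta)}$. The exit formula for the diffusion started at $-\varepsilon$ gives, for $0<\varepsilon<h$,
\[
P_{-\varepsilon}(\tau_{-h}<\tau_0)=\frac{1-e^{-2\beta\theta\varepsilon}}{1-e^{-2\beta\theta h}},
\]
and letting $\varepsilon\downarrow 0$ yields $\tilde n^{(\theta)}[\zeta\geq h]=C\cdot 2\beta\theta/(1-e^{-2\beta\theta h})$ for a constant $C$ that depends only on the local-time normalization. The analogous computation for \emph{positive} excursions gives $n^{(\theta)}[\zeta\geq h]=C\cdot 2\beta\theta/(e^{2\beta\theta h}-1)$, and since \reff{eq:normlalisation} forces $n^{(\theta)}[\zeta\geq h]=c_\theta(h)=2\theta/(e^{2\beta\theta h}-1)$, one obtains $C=1/\beta$. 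Hence $\tilde n^{(\theta)}[\zeta\geq h]=2\theta/(1-e^{-2\beta\theta h})=c_\theta(h)+2\theta$, which recovers $c_0(h)=1/(\beta h)$ in the limit $\theta\downarrow 0$. Consequently $\tilde n^{(\theta)}[\zeta=+\infty]=2\theta$ (and $0$ when $\theta=0$), and since $\tilde n^{(\theta)}[\,\cdot\geq h]$ and $c_\theta(\cdot)$ differ by a constant, $\tilde n^{(\theta)}(\zeta\in d\zeta,\,\zeta<+\infty)=|c_\theta'(\zeta)|\,d\zeta$, which is the announced depth intensity.

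The step I expect to require the most care is this matching of local-time normalizations between $n^{(\theta)}$ and $\tilde n^{(\theta)}$: both measures must be expressed with respect to the same local time at $0$ of $B^{(\theta)}$, which is what pins down $C$ and hence the precise form of the depth intensity (rather than an arbitrary multiple of it). Once this is in place, combining the right and left halves via the independence of $B^{\mathrm{d}}$ and $B^{\mathrm{g}}$ simultaneously yields that $(\eg,\ed)$ are independent exponentials of parameter $2\theta$ and the claimed Poisson description of $\ca^\cn$ conditionally on $(\eg,\ed)$.
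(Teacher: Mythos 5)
Your proof is correct, and its overall architecture is the same as the paper's: split $\ca^\cn$ into the two halves carried by the independent processes $B^{\mathrm{d}}$ and $B^{\mathrm{g}}$ and apply It\^o excursion theory below level $0$. Where you differ is in the key identification step. The paper obtains the law of $\ed$ by citing Borodin--Salminen for the total local time at $0$ of a Brownian motion with drift, and identifies the depth intensity by a symmetry-plus-conditioning argument: the excursions of $B^{\mathrm{d}}$ below $0$, reflected, are positive excursions of $B^{(-\theta)}$, and conditioned on being finite these are distributed as positive excursions of $B^{(\theta)}$, whose maximum has tail $c_\theta$ by the very definition \reff{eq:def-c}. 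You instead compute the negative excursion measure $\tilde n^{(\theta)}$ directly from the scale function $\expp{2\beta\theta x}$, pin down the local-time normalization by matching the positive-excursion computation against $c_\theta(h)=n^{(\theta)}[\zeta\ge h]$, and read off both the exponential law of $\ed$ (from $\tilde n^{(\theta)}[\zeta=+\infty]=2\theta$) and the finite-depth intensity $|c'_\theta(\zeta)|\,d\zeta$ from the single identity $\tilde n^{(\theta)}[\zeta\ge h]=c_\theta(h)+2\theta$. Your route is more self-contained and makes the normalization issue explicit (the one point you rightly flag as delicate is that the same local-time clock, hence the same constant $C$, governs upward and downward excursions; this holds here because the scale density is smooth and the argument is standard). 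The paper's route is shorter but leans on an external reference and on the reader knowing the conditioning identity between $n^{(-\theta)}$ and $n^{(\theta)}$; both are valid, and yours arguably documents better why the answer is exactly $|c'_\theta|$ rather than a constant multiple of it.
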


Notice that the random measure $\ca^\cn$ satisfies Conditions (i)-(iii)
from Definition \ref{def:anc} and is thus indeed an ancestral process. 

This result is very similar to Corollary 2 in \cite{bd:tl}. The main additional ingredient here is the order (given by the $u$ variable) which will be very useful in the simulation.

\begin{proof}
  Since  $B^\text{d}$ and  $B^\text{g}$  are independent  with the  same
  distribution, we deduce that $\eg$  and $\ed$ are independent with the
  same distribution.  Let $\theta>0$.   Since $B^\text{d}$ is a Brownian
  motion with drift $-2\theta$,  we deduce from Lemma \ref{lem:mb-drift}
  that $\ed$ is exponential with  mean $1/2\theta$.  The case $\theta=0$
  is immediate. 

  The excursions  below zero  of $B^\text{d}$ conditionally  given $E_d$
  are  excursions  of  a  Brownian  motion  $B^{(-\theta)}$  with  drift
  $2\theta$ (after  symmetry with respect  to $0$) conditioned  on being
  finite, that  is excursions of  a Brownian motion  $B^{(\theta)}$ with
  drift   $-2\theta$,  see   Lemma  \ref{lem:mb-drift}.    Moreover,  by
  \reff{eq:def-c}, $c_\theta$  is exactly  the tail distribution  of the
  maximum  of  an excursion  under  $n^{(\theta)}$.  Standard theory  of
  Brownian excursions gives then the result.
\end{proof}

\subsubsection{Identification of the trees}

Let   ${\mathfrak{T}}^\cn=\mathfrak{T}(\ca^\cn)$   denote  the   locally
compact  tree  associated  with  the ancestral  process  $\ca^\cn$,  see
Proposition \ref{proof:T(A)}. According to the following proposition, we
shall  say   that  the  ancestral   process  $\ca^\cn$  codes   for  the
genealogical tree  of the  extant population  at time  0 for  the forest
$\cf^{(\theta)}$.

\begin{prop}
   \label{prop:A=G}
 Let  $\theta\geq   0$. The trees $\cg_0(\cf^{(\theta)})$ under
   $\P^{(\theta)}$ and  ${\mathfrak{T}}^\cn$ belong to the same
   equivalence class in
   $\T_1$. 
\end{prop}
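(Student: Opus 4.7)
The plan is to construct an explicit root-preserving isometry between $\cg_0(\cf)$ and $\mathfrak{T}^\cn$, using the contour process $B=(B_t,t\in\R)$ of Section~\ref{sec:contour} as the bridge between the two descriptions. By Lemma~\ref{lem:TginT1}, $B\in\ce_1$ codes $\cf_{(-\infty,0]}$, and one checks that the forest height function agrees with the contour: $h_\bff(p_B(s))=B(s)$ for every $s\in\R$. On the semi-infinite spine this is immediate; inside a grafted tree $\tau_i=T_{e_i}$ it follows from $B(s)=h_i+e_i(s-a_i)$ combined with $h_\bff|_{\tau_i}(x)=h_i+h_{\tau_i}(x)$. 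Consequently the extant population at time $0$ is $\cz_0(\cf)=\{p_B(s):B(s)=0\}$, and it is continuously parameterized by local time at $0$ via
\[
\phi(u)=p_B(\sigma(u)),\qquad u\in[-\eg,\ed],
\]
where $\sigma(u)=\inf\{s>0:L^d_s\ge u\}$ for $u\in[0,\ed]$, $\sigma(u)=-\inf\{s>0:L^g_s\ge -u\}$ for $u\in[-\eg,0]$, and $\phi(0)=\rho_0$.

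The crucial isometry check is on leaves. For $0\le u<u'\le\ed$,
\[
d_\bff(\phi(u),\phi(u'))=d_B(\sigma(u),\sigma(u'))=-2\min_{[\sigma(u),\sigma(u')]}B^d=2\max\{\zeta_k:u_k\in(u,u']\},
\]
the last equality being exactly the definition of the atoms $(u_k,\zeta_k)$ of $\ca^\cn$ in Section~\ref{sec:PPM} as positions and depths of excursions of $B^d$ below $0$. This matches the distance \reff{eq:dist-anc} between the corresponding leaves $(u,0)$ and $(u',0)$ of $\mathfrak{T}^\cn$. The left-side case is symmetric via $B^g$; for $u<0<u'$ the two lineages necessarily merge on the semi-infinite spine, so the genealogical distance equals twice the maximum of $\zeta_k$ over the union of both sides excluding $0$, matching precisely the convention $J(x_i,x_j)=[x_i,x_j]\setminus\{0\}$ in \reff{eq:dist-anc}.

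I would then extend $\phi$ to a root-preserving isometry $\Phi$ on all of $\mathfrak{T}^\cn$. The pre-completion $\mathfrak{T}$ is the disjoint union of the distinguished segment $S_0=\{0\}\times(-\infty,0]$ and the vertical segments $S_i$, $i\in\ci$. I send $S_0$ onto the semi-infinite branch of $\cg_0(\cf)$ (the running-minimum path of $B$, parameterized by depth) and each $S_i$ onto the geodesic in $\cg_0(\cf)$ from $\phi(u_i)$ down to its attachment point prescribed by Remark~\ref{rem:identif}. Formula \reff{eq:dist-anc2}, together with the explicit description of distances in $\cf_{(-\infty,0]}$, gives that $\Phi$ is an isometry on $\mathfrak{T}$, and the extension to the completion is automatic.

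The main obstacle to verify carefully is surjectivity of $\Phi$ onto $\cg_0(\cf)$. Every ancestor of a level-$0$ vertex lies either on the semi-infinite spine (covered by $S_0$) or along a lineage joining some level-$0$ vertex to the spine; such a lineage is traced by the contour $B$ through an excursion below $0$, hence corresponds to some $S_i$ (or, in the completion, to a limit of such segments). Proposition~\ref{proof:T(A)} and Corollary~\ref{cor:bff} ensure that both $\mathfrak{T}^\cn$ and $\cg_0(\cf)\subset\cf_{(-\infty,0]}$ are complete and locally compact with a unique semi-infinite branch, and the $\P^{(\theta)}$-a.s. density of the atoms $u_i$ in $[-\eg,\ed]$ combined with the isometry on a dense leaf set forces surjectivity. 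Since $\Phi$ sends $(0,0)$ to $\rho_0$ and the two semi-infinite branches correspond, the two trees belong to the same equivalence class in $\T_1$.
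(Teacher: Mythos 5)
Your proposal is correct and follows essentially the same route as the paper: both arguments use the contour process $B$ to realize $\cg_0(\cf)$ inside $\cf_{(-\infty,0]}$, verify that the genealogical distance between extant individuals equals $-2\min B$ over the corresponding time interval and hence $2\max\zeta_k$ as in \reff{eq:dist-anc}, and conclude by density of the leaf set and completeness. The extra material you supply (local-time parameterization, explicit extension of the isometry to the segments $S_i$, surjectivity) is a more detailed write-up of the same identification rather than a different method.
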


\begin{proof}
  Let us first remark that the genealogical tree $\cg_0(\cf^{(\theta)})$ can be
  directly  constructed using  the process  $B$ as  described on  Figure
  \ref{fig:ancestral2}.      
  
  More precisely, recall that $B$ is the contour function of the tree
  $\cf^{(\theta)}_{(-\infty,0]}$. Let us denote by $p_B$ the canonical projection
  from $\R$ to $\cf^{(\theta)}_{(-\infty,0]}$ as defined in Section
  \ref{sec:forest}. Recall $((\alpha_i,\beta_i), \, i\in \ci^{\ell})$,
  with   $\ell\in\{\text{g},  \text{d}\}$, are the excursion intervals
  of $B^\ell$ below $0$. 
Then $\cg_0(\cf^{(\theta)})$ is the smallest complete sub-tree
  of $\cf^{(\theta)}_{(-\infty,0]}$ that contains the points $(p_B(\alpha_i), i\in
  \ci_\text{g}\bigcup\ci_\text{d} )$ and the semi-infinite branch of $\cf^{(\theta)}_{(-\infty,0]}$. 
  
%

\begin{figure}[H]
\begin{center}
\psfrag{Bd}{$B_t^{d}$}
\psfrag{Bg}{$B^{g}_{-t}$}
\includegraphics[width=9cm]{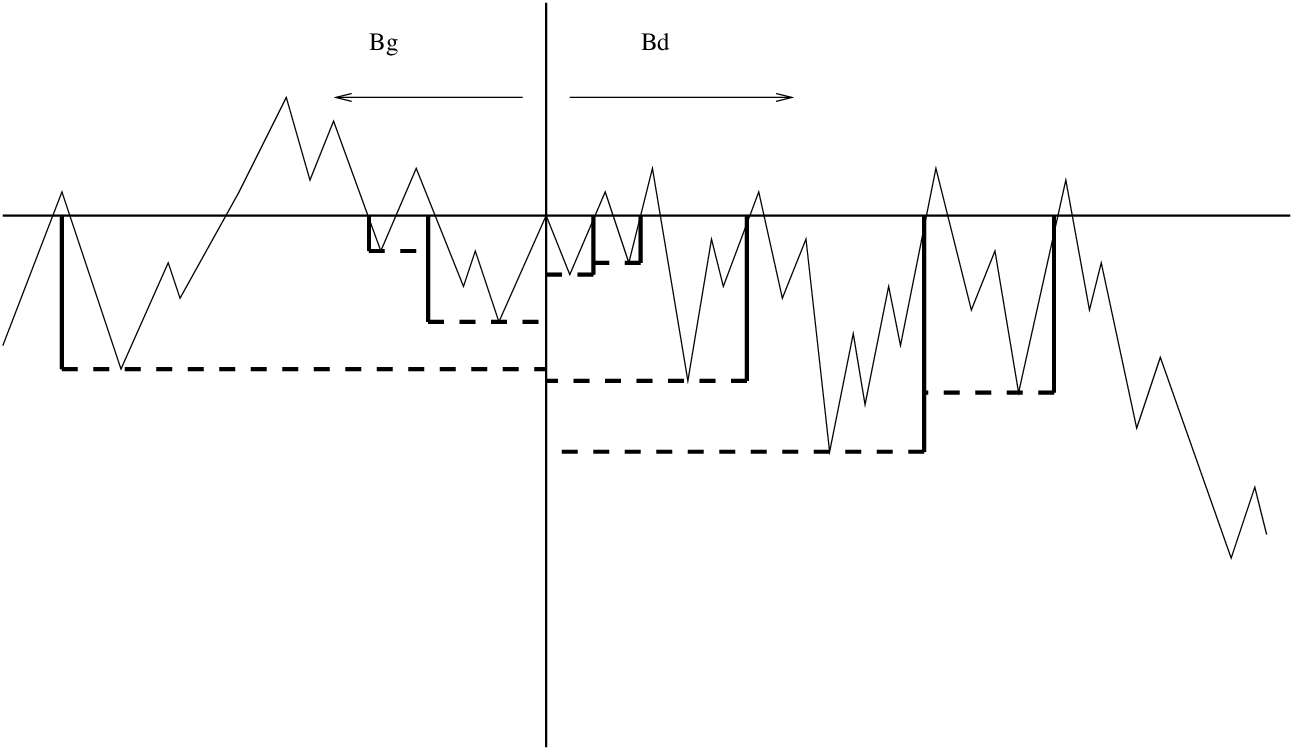}
\caption{The genealogical tree inside the Brownian motions}\label{fig:ancestral2}
\end{center}
\end{figure}

Let $i,j\in\ci$ with $0<\alpha_i<\alpha_j$  for instance.  By definition
of the  tree coded by  a function, the distance  between $p_B(\alpha_i)$
and $p_B(\alpha_j)$ in $\cg_0(\cf^{(\theta)})$ is given by:
\[
d(p_B(\alpha_i),p_B(\alpha_j))=-2\min_{u\in [\alpha_i,\alpha_j]}B_u.
\]
But, by definition of $\ca^\cn$, we have:
\begin{align*}
-\min_{u\in [\alpha_i,\alpha_j]}B_u & =\max_{k\in\ci\, \alpha_i\le \alpha_k< \alpha_j}\bigl(-\min_{u\in[\alpha_k,\beta_k]}B_u\bigr)\\
&=\max_{k\in\ci\, \alpha_i\le \alpha_k< \alpha_j} \zeta_k.
\end{align*}

The other cases $\alpha_j<\alpha_i<0$ and $\alpha _i<0<\alpha_j$ can be handled similarly. We deduce that the distances on a dense subset of leaves of $\cg_0(\cf^{(\theta)})$ and $\mathfrak{T}^\cn$ coincide, which implies the result by completeness of the trees.

\end{proof}

\subsubsection{Local times and other contour processes}\label{sec:local_times}
Recall $\theta\geq 0$. 
The
Brownian forest $\cf^{(\theta)}$ can be viewed as the genealogical tree of a stationary
continuous-state branching process (associated with the branching
mechanism $\psi_\theta$ defined in \reff{eq:psi}), see
\cite{cd:spsmrcatsbp}. To be more precise, for every $i\in I$ let 
$(\ell^{(i)}_a, \, a\ge 0)$ be the local time measures of the tree
$\tau_i$.
For every $t\in\R$, we consider  the measure $\rZ_t$ on $\cz_t(\cf^{(\theta)})$ defined by:
\begin{equation}
   \label{eq:rZdef}
\rZ_t(dx)=\sum _{i\in I} \ind_{\tau_i}(x)\, \ell_{t-h_i}^{(i)} (dx),
\end{equation}
and write  $Z_t=\rZ_t(1)$ for its  total mass which also  represents the
population  size at  time  $t$. For  $\theta=0$,  we have  $Z_t=+\infty$
a.s.~for every  $t\in\R$. For $\theta>0$, the process  $(Z_t, {t\ge 0})$
is a stationary Feller diffusion, solution of the SDE \reff{eq:SDE-Z}, see
Corollary 3.3. in \cite{cd:spsmrcatsbp}, see also Theorem 1.2 in \cite{d:crti}.  
\medskip

In the literature, one also consider  the so called Kesten tree which is
the genealogical tree associated with the Feller diffusion $Z^+$ solution
of \reff{eq:SDE-Z} for  $t\geq 0$ with initial condition  $ Z_0^+=0$. It
corresponds to the genealogy of a sub-critical branching process started
from an infinitesimal  individual alive at time 0,  conditionally on the
non-extinction event. In our setting, the genealogical tree correspond to
$\cf^{(\theta)}_{[0,  +\infty )}$ and the process $Z^+$ is distributed
as $\rZ^+(1)=(\rZ_t^+(1), t\geq 0)$ with the measure $\rZ^+_t$ on
$\cz_t(\cf^{(\theta)}_{[0,  +\infty )})$ defined by:
\[
\rZ^+_t(dx)=\sum _{i\in I} \ind_{\{h_i\geq 0\}}\, \ind_{\tau_i}(x)\, \ell_{t-h_i}^{(i)} (dx).
\]

It  can also  be  described  using a  contour  process  obtained by  the
concatenation   at  infinity   of  two   independent  Brownian   motions
distributed  as  $B^{(\theta)}$  conditioned  to be  positive.  We use  the
description given in  \cite{d:crti} which is valid in the general Lévy
case, see also Section  7.4 from  \cite{alw:ip}  which   corresponds
to the case 
$\theta=0$. 

Let $\ce_2$ be the set  of continuous non-negative functions $g$ defined
on         $\R$          such         that          $g(0)=0$         and
$\lim_{x\rightarrow  -\infty   }  g(x)=  \lim_{x\rightarrow   +\infty  }
g(x)=+\infty$.
For such a  function, we still consider the  pseudo-metric $d_g$ defined
by \reff{eq:dist_g} but  for $s,t\in\R$ and with  $m_g(s,t)$ replaced by
$m_g(s,t)= \inf_{r\not  \in [s\wedge  t,s\vee t ]}  g(r)$ if  $st<0$. We
define the  tree $T^+_g$  as the  quotient space on  $\R$ induced  by this
pseudo-metric.  We set $p_g$ as  the canonical projection from $\R$ onto
$T_g$.  For $g\in \ce_2$, the triplet $(T^+_g,d_g,p_g(0))$ is a complete
locally compact rooted real tree with a unique semi-infinite branch.
We still call $g$ the contour process of $T^+_g$.

Let $B^+=(B^+_t,     t\in     \R)$ be  such    that   $(B^+_t,    t\geq   0)$    and
$(B^+_{-t},   t\geq    0)$   are   independent   and    distributed   as
$B^{(\theta)}  -2 I^{(\theta)}$  which  is a  diffusion  on $\R_+$  with
infinitesimal generator given by \reff{eq:generator}. 
Thanks to  Corollary \ref{cor:b+-b-},  we get  that the tree $T^+_{B^+}$
with contour process $B^+$ is distributed as    the     genealogical     tree
$\cf^{(\theta)}_{[0,  +\infty  )}$ which is  associated to  the  Feller
diffusion   $\rZ^+(1)$  (solution
of \reff{eq:SDE-Z} on $\R^+$ with $Z_0=0$). 
\medskip 

Let $\theta>0$. It is also immediate  to give the contour process of the
genealogical  tree  conditionally  on   the  extinction  being  at  time
0. Recall the tree defined by its contour process with the concatenation
at 0 defined in Lemma \ref{lem:TginT1}. Set $B^-=-B^+$ It is left to the
reader to check that the tree  $T^-_{B^-}$ with contour process $B^-$ is
distributed   as  the   genealogical  tree   of  the   Feller  diffusion
$\rZ^-(1)=(\rZ^-(1)_t,  t\leq  0)$ conditioned  to  die  at time  0  and
started  with the  stationary distribution  at $-\infty  $ (solution  of
\reff{eq:SDE-Z} on $\R_-$ with $Z_0=0$),  where the measure $\rZ^-_t$ on
$\cz_t(\cf^{(\theta)}_{(-\infty , 0]})$ is defined by:
\[
\rZ^-_t(dx)=\sum _{i\in I} \ind_{\{\zeta_i+h_i< 0\}}\, \ind_{\tau_i}(x)\, \ell_{t-h_i}^{(i)} (dx),
\]
where $\zeta_i$ is the height of the tree $\tau_i$. This result can also
be deduced from the reversal property of the Brownian tree, see
\cite{ad:rpbt}.

\section{Simulation of the genealogical tree
  ($\theta>0$)}\label{sec:simul}

We use the representation of trees using the ancestral process, see Section
\ref{sec:ancestral}, which is an atomic measure on $\R^*\times (0,
+\infty )$ satisfying conditions of Definition \ref{def:anc}. 

Under                $\P^                {(\theta)}$,                let
$\sum_{i\in  I}\delta_{(h_i,\varepsilon_i,e_i)}$  be   a  Poisson  point
measure      on      $\R\times\{-1,1\}\times\ce$     with      intensity
$\beta    dh\,    (\delta_{-1}(d\varepsilon)+\delta_{1}(d\varepsilon))\,
n^{(\theta)}(de)$,
and  let $\cf^{(\theta)}=((h_i,\tau_i),  \,  i\in I)$  be  the associated  Brownian
forest.  We denote by $\ell_a^{(i)}$ the  local time measure of the tree
$\tau_i$  at  level  $a$  (recall  that this  local  time  is  zero  for
$a\not\in  [0,  H(\tau_i)]$) and we denote by $\partial _i$ the root of $\tau_i$.   Recall that the extant  population  at  time
$h\in   \R$    is   given    by   $\cz_h(\cf^{(\theta)})$   defined    in   Section
\ref{sec:genealogy} and the measure $\rZ_h$
on $\cz_h(\cf^{(\theta)})$ is defined by \reff{eq:rZdef}.

Let  $(\mathfrak{X}_k,  k\in \N^*)$  be,  conditionally  given $\cf^{(\theta)}$,  independent
random  variables  distributed  according  to  the  probability  measure
$\rZ_0/Z_0$. Remark that the normalization by $Z_0$, which is motivated
by the sampling approach,  is not usual in the branching setting, see
for instance Theorem 4.7 in \cite{cd:spsmrcatsbp}, where the sampling is
according to $\rZ_0$ instead leading to the bias factor $Z_0^n$. 

 For every $k\in \N^*$, we  set $i_k$ the index in $I$
such that $\mathfrak{X}_k\in\tau_{i_k}$.  For every $n\in\N^*$, we set $I_n=\{i_k,\ 1\le k\le n\}$ and for every $i\in I_n$, we denote by $\tau_i^{(n)}$ the sub-tree of $\tau_i$ generated by the $\mathfrak{X}_k$ such that $i_k=i$ and $1\le k\le n$, \textit{i.e.}:
\[
\tau_i^{(n)}=\bigcup_{1\le k\le n,\ i_k=i}\lb \partial_ i,\mathfrak{X}_k\rb.
\]
We define  the genealogical  tree $T_n$ of  $n$ individuals
sampled uniformly at random among the population at time 0 by:
\[
T_n=(-\infty , 0] \circledast_{i\in I_n}(\tau_i^{(n)},h_i).
\]
Notice  that   $T_n\subset  T_{n+1}$.   Since  the
support    of   $\rZ_h$    is   $\cz_h(\cf^{(\theta)})$ a.s.,    we   get    that   a.s.~$\clo \left(  \bigcup _{n\in \N^*} T_n  \right)= \cg_0(\cf^{(\theta)})$,
where $\cg_0(\cf^{(\theta)})$, see Definition \reff{eq:def-G}, is the genealogical
tree of the forest $\cf^{(\theta)}$ at time $0$.\medskip

Recall  $c_\theta$ defined by \reff{eq:def-c}.
For $\delta>0$, we will consider in the next sections a positive random
variable $\zeta_\delta^*$ whose distribution is given by, for $h>0$:
\begin{equation}
   \label{eq:dist-zd}
\P(\zeta^*_\delta<h)=\expp{-\delta c_\theta(h)}.
\end{equation}
This random variable is easy to simulate as, 
if $U$ is uniformly distributed on $[0,1]$, then  $\zeta^*_\delta$ has
the same distribution as:
\[
\inv{2\theta \beta} \log\left(1- \frac{2\theta\delta}{\log(U)}\right).
\]
This  random  variable  appears  naturally  in  the  simulation  of  the
ancestral         process         of        $\cf^{(\theta)}$         as,         if
$\sum_{i\in I}  \delta_{(z_i, \zeta_i)}$ is  a Poisson point  measure on
$\R\times\R_+$                       with                      intensity
$\ind_{   [0,  \delta]}(z)\,   dz   \,  |c'_\theta(\zeta)|d\zeta$   (see
Proposition   \ref{prop:ancestral}   for   the   interpretation),   then
$\zeta^*_\delta$ is distributed as $\max _{i\in I} \zeta_i$.

We now present many ways to simulate $T_n$. This will be done
by simulating ancestral processes, see Section \ref{sec:ancestral}, which
code for trees distributed as 
$T_n$. 

Recall that for an
  interval $I$, we write $|I|$ for its length. 

\subsection{Static simulation}
\label{sec:static}
In what  follows, $\text{S}$  stands for static.  Assume $n\in  \N^*$ is
fixed. We  present a  way to simulate  $T_n$ under  $\P^{(\theta)}$ with
$\theta>0$.   See  Figures  \ref{fig:unif}  and  \ref{fig:sim1}  for  an
illustration for $n=5$.

\begin{itemize}
\item[(i)] The size of the population on the left (resp. right) of the origin is $\eg$
  (resp.  $\ed$), where  $\eg, \ed$  are independent  exponential random
  variables with mean $1/2\theta$. Set  $Z_0=\eg+\ed$ for the total size
  of the population at time $0$.   Let $(U_k, k\in \N^*)$ be independent
  random variables uniformly distributed on  $[0, 1]$ and independent of
  $(\eg, \ed)$. Set $X_{0}=0$, and, for $k\in \N^*$, $X_k=Z_0 U_k - \eg$
  as well as $\cx_k=\{-\eg,\ed, X_0, \ldots, X_k\}$.
   \item[(ii)] 
For $1\leq k\leq n$, set $X_{k,n}^\text{g}=\max \{x\in
     \cx_n, \, x<X_k\}$ and  $X_{k,n}^\text{d}=\min \{x\in
     \cx_n, \, x>X_k\}$. We also set $I^\text{S}_{k}=[X_{k,n}^\text{g}, X_k]$ if
     $X_k>0$ and $I^\text{S}_{k}=[X_k, X_{k,n}^\text{d}]$ if
     $X_k<0$. 

   \item[(iii)]  Conditionally on  $(\eg, \ed,  X_1, \ldots,  X_n)$, let
     $(\zeta_{k}^{ \text{S}}, 1\leq  k\leq n)$  be independent  random variables
     such that  for $1\leq k\leq  n$, $\zeta_{k}^{ \text{S}}$ is  distributed as
     $\zeta^*_\delta$,        see         \reff{eq:dist-zd},        with
     $\delta= |I^\text{S}_{k}|$.   Consider the tree $\mathfrak{T}^\text{S}_n$  corresponding to
     the                        ancestral                        process
     $\ca_n^\text{S}=\sum_{k=1}^n \delta_{(X_k, \zeta_{k}^{ \text{S}})}$.

\end{itemize}

\begin{figure}[H]
\begin{center}
 \psfrag{Eg}{$-\eg$}
 \psfrag{Ed}{$\ed$}
 \psfrag{X0}{$X_0$}
 \psfrag{X1}{$X_1$}
 \psfrag{X2}{$X_2$}
 \psfrag{X3}{$X_3$}
 \psfrag{X4}{$X_4$}
 \psfrag{X5}{$X_5$}
\includegraphics[width=10cm]{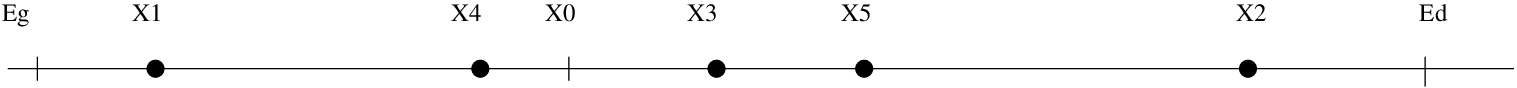}
\end{center}
\caption{One realization of $\eg, \ed, X_1, \ldots, X_5$.}
\label{fig:unif}
\end{figure}

\begin{figure}[H]
\begin{center}
 \psfrag{Eg}{$-\eg$}
 \psfrag{Ed}{$\ed$}
 \psfrag{X0}{$X_1$}
 \psfrag{X1}{$X_4$}
 \psfrag{X2}{$X_0$}
 \psfrag{X3}{$X_3$}
 \psfrag{X4}{$X_5$}
 \psfrag{X5}{$X_2$}
\psfrag{I0}{$I^\text{S}_{4}$}
\psfrag{I5}{$I^\text{S}_{2}$}
\psfrag{z1}{$\zeta^\text{S}_{4}$}
\psfrag{z5}{$\zeta^\text{S}_{2}$}
\includegraphics[width=10cm]{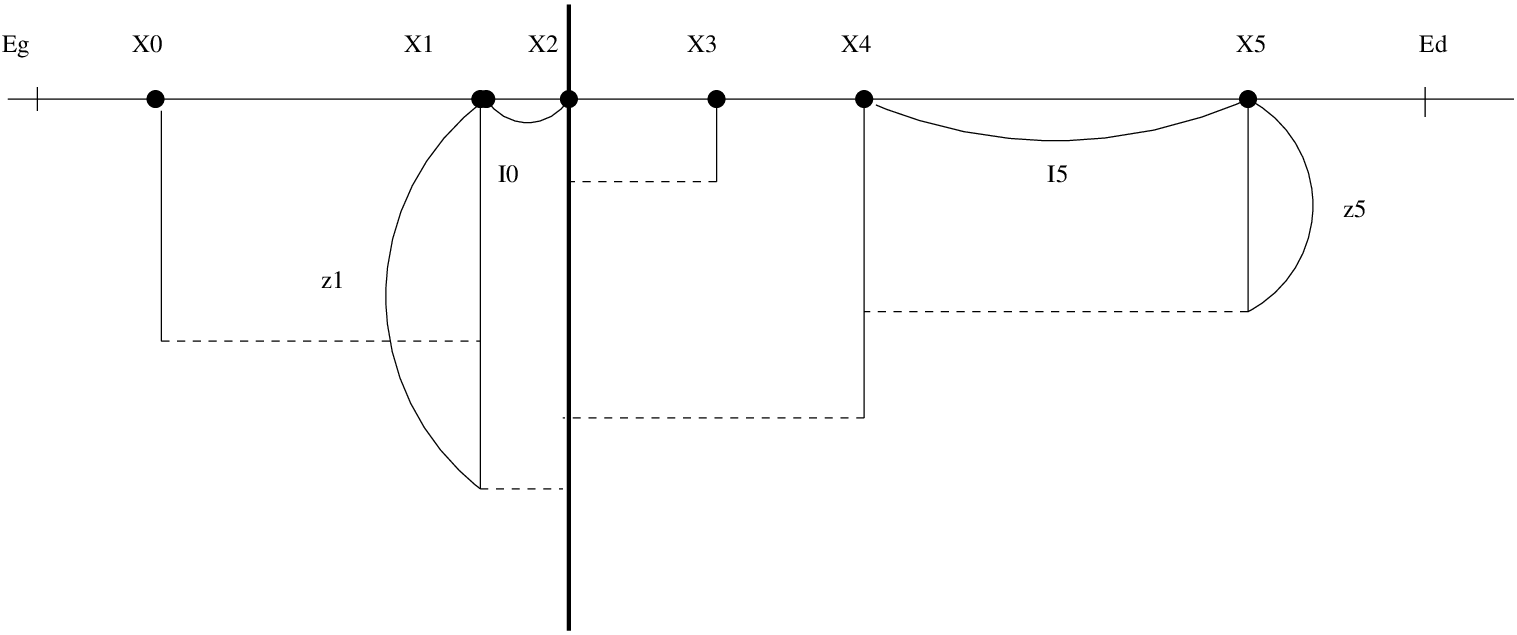}
\end{center}
\caption{One realization of  the tree $\mathfrak{T}_5^\text{S}$.}
\label{fig:sim1}
\end{figure}

This gives an exact simulation of the tree $T_n$ according to the following result.

\begin{lem}
   \label{lem:arbre-n}
Let $\theta>0$ and  $n\in \N^*$. The tree $\mathfrak{T}_n^\text{S}$ is distributed as
$T_n$ under $\P^{(\theta)}$. 
\end{lem}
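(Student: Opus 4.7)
The plan is to identify both $T_n$ and $\mathfrak{T}_n^\text{S}$ with trees built from ancestral processes, and then to use the Poisson structure of the full ancestral process to match distributions.

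First, by Proposition \ref{prop:A=G}, the genealogical tree $\cg_0(\cf)$ under $\P^{(\theta)}$ is equivalent in $\T_1$ to $\mathfrak{T}^\cn=\mathfrak{T}(\ca^\cn)$. Under this identification, leaves in $\cz_0(\cf)$ correspond to points $(u,0)$ with $u\in[-\eg,\ed]$, and the local time measure $\rZ_0$ pushes forward to Lebesgue measure on $[-\eg,\ed]$ (this follows from the very construction of $\ca^\cn$ using the local times $L^\ell$ of the coding Brownian motions $B^\ell$ in Section \ref{sec:PPM}). Since $\theta>0$ ensures $Z_0=\eg+\ed<\infty$ a.s., conditionally on $(\eg,\ed)$ the samples $\mathfrak{X}_1,\dots,\mathfrak{X}_n$ drawn independently from $\rZ_0/Z_0$ correspond to i.i.d.\ uniform points on $[-\eg,\ed]$, which matches the construction $X_k=Z_0 U_k-\eg$ of step (i).

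Second, $T_n$ is the smallest complete sub-tree of $\cg_0(\cf)$ containing the spine and the sampled points. I claim that, in the ancestral process picture, $T_n$ coincides in $\T_1$ with $\mathfrak{T}(\tilde\ca_n)$, where
\[
\tilde\ca_n=\sum_{k=1}^n \delta_{(X_k,\tilde\zeta_k)},\qquad \tilde\zeta_k=\max\{\zeta:(u,\zeta)\ \text{atom of}\ \ca^\cn,\ u\in I^\text{S}_k\},
\]
with the convention $\max\emptyset=0$. Indeed, by the distance formula \reff{eq:dist-anc}, the distance in $\mathfrak{T}^\cn$ between two sampled leaves (or between a sampled leaf and the origin) is $2$ times a maximum of $\zeta$-values of $\ca^\cn$ over an interval which, thanks to the definition of $I^\text{S}_k$, decomposes into a disjoint union of the intervals $I^\text{S}_l$ for the intermediate samples; and this maximum equals the corresponding maximum of the $\tilde\zeta_l$'s, which is exactly the distance in $\mathfrak{T}(\tilde\ca_n)$. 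Since $T_n$ is the closure of the union of the spine and the ancestral lines of the samples, the isometry extends to the whole trees.

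Third, by Proposition \ref{prop:ancestral}, conditionally on $(\eg,\ed)$ the measure $\ca^\cn$ is a Poisson point measure on $(-\eg,\ed)\times(0,\infty)$ with intensity $du\,|c'_\theta(\zeta)|\,d\zeta$. The positions $X_1,\dots,X_n$ depend only on the independent variables $U_1,\dots,U_n$, so they are independent of $\ca^\cn$ given $(\eg,\ed)$. Given $(\eg,\ed,X_1,\dots,X_n)$, the intervals $I^\text{S}_k$ are disjoint; by the Poisson property the restrictions of $\ca^\cn$ to the strips $I^\text{S}_k\times\R_+$ are independent, and a standard computation gives
\[
\P\bigl(\tilde\zeta_k<h\mid X_1,\dots,X_n,\eg,\ed\bigr)=\exp\bigl(-|I^\text{S}_k|\,c_\theta(h)\bigr),
\]
which is precisely the distribution \reff{eq:dist-zd} of $\zeta^*_{|I^\text{S}_k|}$. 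Hence the conditional law of $(\tilde\zeta_1,\dots,\tilde\zeta_n)$ given $(\eg,\ed,X_1,\dots,X_n)$ coincides with that of $(\zeta^{\text{S}}_1,\dots,\zeta^{\text{S}}_n)$ from step (iii). Unconditioning, $\tilde\ca_n\stackrel{(d)}{=}\ca^\text{S}_n$, and therefore $T_n=\mathfrak{T}(\tilde\ca_n)\stackrel{(d)}{=}\mathfrak{T}(\ca^\text{S}_n)=\mathfrak{T}^\text{S}_n$ in $\T_1$.

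The main obstacle is the second step: one must carefully verify, using \reff{eq:dist-anc} and a case analysis on the signs of the $X_k$'s (including the convention for $J(x,y)$ across the origin), that the tree obtained by retaining only the sampled leaves of $\mathfrak{T}^\cn$ is isometric to the tree built from the reduced ancestral process whose $\zeta$-marks are the interval maxima $\tilde\zeta_k$. Once this is established, the Poisson structure takes care of the distributional identity.
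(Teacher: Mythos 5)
Your proposal is correct and follows essentially the same route as the paper: identify the sampled leaves with i.i.d.\ uniform points on $[-\eg,\ed]$ in the local-time coordinate, observe that the subtree they span corresponds to the reduced ancestral process whose marks are the maxima of the $\zeta$-values over the intervals $I^\text{S}_k$, and use the Poisson description of $\ca^\cn$ from Proposition \ref{prop:ancestral} to see that these maxima are conditionally independent with the law \reff{eq:dist-zd}. The only cosmetic difference is that the paper phrases the reduction through the contour process $B$ and its local time (defining the marks as $-\min B$ over the local-time intervals) and leaves the distributional identification to ``standard results on Poisson point processes,'' whereas you work directly on the point measure $\ca^\cn$ and spell out the computation $\P(\tilde\zeta_k<h)=\exp(-|I^\text{S}_k|c_\theta(h))$ and the decomposition of $J(x_i,x_j)$ into the $I^\text{S}_l$ — a step the paper's proof keeps implicit.
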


\begin{proof}
Let $B=(B_t, t\in \R)$ be the Brownian motion with drift defined in Section \ref{sec:contour} and let $(L_t,t\in\R)$ be its local time at 0 \textit{i.e.}:
\[
L_t=L_t^{\text{d}}\ind_{t>0}+L_{-t}^{\text{g}}\ind_{t<0}.
\]
We set $L_\infty=L_\infty^{\text{d}}+L_{\infty}^{\text{g}}$ and we consider i.i.d. variables $(S_1,\ldots, S_n)$ distributed according to $dL_s/L_\infty$. We denote by $(S_{(1)},\ldots,S_{(n)})$ the order statistics of $(S_1,\ldots S_n)$ and, for every $i\le n$, we set
\[
M_i=\begin{cases}
-\min_{u\in [S_{(i)}, S_{(i+1)}\wedge 0]}B_u & \mbox{if }S_{(i)}<0,\\
-\min_{u\in [S_{(i-1)}\vee 0, S_{(i)}]}B_u & \mbox{if }S_{(i)}>0.
\end{cases}
\]

We set $\mathcal{A}_n=\sum_{1\le i\le n}\delta_{(L_{S_{(i)}},\zeta_i)}$
which is (see Definition \ref{def:anc}) an ancestral process and let
$\mathfrak{T}(\mathcal{A}_n)$ be the associated tree. As $B$ is the
contour process of the tree $\mathcal{F}_{(-\infty,0]}$, we get that
$T_n$ and $\mathfrak{T}(\mathcal{A}_n)$ are equally distributed. 

Moreover, by Proposition \ref{prop:ancestral}, Proposition
\ref{prop:A=G} and standard results on Poisson point processes, we get
that $\mathfrak{T}(\mathcal{A}_n)$ and $\mathfrak{T}_n^S$ are also
equally distributed. 
\end{proof}

\subsection{Dynamic simulation (I)}\label{sec:dynamic1}

We can modify the static simulation of the previous section to provide a
natural dynamic construction of the  genealogical tree. In what follows,
$\text{D}$ stands for  dynamic.  Let $\theta>0$.  We  build recursively a
family    of   ancestral    processes   $(\ca_n,    n\in   \N)$,    with
$\ca_0^\text{D}=0$                                                   and
$\ca_n^\text{D}=\sum_{k=1}^n   \delta_{(V_k,   \zeta_k^\text{D})}$   for
$n\in \N^*$.

\begin{itemize}
\item[(i)] Let $\eg$,  $\ed$, $(X_n, n\in \N)$ and  $(\cx_n, n\in \N^*)$
  be defined  as in (i) of  Section \ref{sec:static}.   For  $n\in \N  ^*$, set
  $X_n^\text{g}=\max       \{x\in        \cx_n,       x<X_n\}$       and
  $X_n^\text{d}=\min  \{x\in  \cx_n,  x>X_n\}$. 
  
   For $n\in  \N  ^*$  and
  $\ell\in    \{\text{g},     \text{d}\}$,    define     the    interval
  $I_n^{\ell}= [X_n  \wedge X_n^\ell, X_n\vee X_n^\ell]$ and its length $|
I_n^{\ell}|=|X_n - X_n^\ell|$. 

We shall consider  and check by the induction  the following hypothesis:
for  $n\geq 2$  the random  variables $V_1,  \ldots, V_{n-1}$ are such that
\begin{equation}
   \label{eq:entrelace}
X_{(0,n-1)}<V_{(1,n-1)}<X_{(1, n-1)}< \cdots < V_{(n-1, n-1)}< X_{(n-1,
  n-1)},
\end{equation}
where      $(V_{(1,      n-1)},     \ldots,      V_{(n,n)})$   and
$(X_{(0,n-1)},  \ldots,  X_{(n-1,n-1)})$   respectively  are  the  order
statistics  of $(V_1,  \ldots,  V_{n-1})$ and  of $(X_0, \ldots,  X_{n-1})$
respectively. Notice that \reff{eq:entrelace} holds trivially for $n=1$. 

We set $\cw_n^\text{D}=(\eg, \ed,  X_1, \ldots, X_n,  V_1, \ldots, V_{n-1},
\zeta^\text{D}_{1}, \ldots, \zeta^\text{D}_{n-1})$.

\item[(ii)]    Assume   $n\geq    1$.   We    work   conditionally    on
  $\cw^\text{D}_n$.  On  the event
  $\{X_n^\text{d}=\ed\}$  set   $I_n=I_n^\text{g}$  and  on   the  event
  $\{X_n^\text{g}=-\eg\}$   set   $I_n=I_n^\text{d}$.     On   the   event
  $\{X_n^\text{d}=\ed\}\bigcup  \{X_n^\text{g}=-\eg\}$,   let  $V_n$  be
  uniform   on    $I_n$   and   $\zeta_{n}^\text{D}$   be    distributed   as
  $\zeta^*_\delta$, see \reff{eq:dist-zd}, with $\delta= |I_n|$. 

\item[(iii)] Assume  $n\geq 2$  and that \reff{eq:entrelace}  holds.  We
  work     conditionally     on     $\cw^\text{D}_n$.      On     the     event
  $\{-\eg<X_n^\text{g},\,X_n^\text{d}<\ed\}$,  there   exists  a  unique
  integer    $\kappa_n\in     \{1,    \ldots,    n-1\}$     such    that
  $V_{\kappa_n}\in       [      X_n^\text{g},\,X_n^\text{d}]$.        If
  $X_n\in [X_n^\text{g}, V_{\kappa_n})$,  set $I_n=I_n^\text{g}$; and if
  $X_n\in [V_{\kappa_n}, X_n^\text{d}]$, set $I_n=I_n^\text{d}$.  Then, let $V_n$  be uniform
  on $I_n$ and $\zeta_{n}^\text{D}$  be distributed as $\zeta^*_\delta$,
  with   $\delta=    |I_n|$,   conditionally   on   being    less   than
  $\zeta^\text{D}_{\kappa_n}$.

\item[(iv)] Thanks  to (ii)  and (iii), notice  that \reff{eq:entrelace}
  holds with $n-1$ replaced by $n$,  so that the induction is valid. Set
  $\ca_n^\text{D}=\ca_{n-1}^\text{D}+                      \delta_{(V_n,
    \zeta^\text{D}_{n})}$
  and consider  the tree $\mathfrak{T}_n^\text{D}$ corresponding  to the
  ancestral process $\ca_n^\text{D}$.
\end{itemize}

See Figures \ref{fig:D1-4} and \ref{fig:D1-5} for an instance of
$\mathfrak{T}_4^\text{D}$ and $\mathfrak{T}_5^\text{D}$.

\begin{figure}[H]
\begin{center}
\psfrag{Eg}{$-\eg$}
\psfrag{Ed}{$\ed$}
\psfrag{X0}{$X_{1}$}
\psfrag{X1}{$X_{4}$}
\psfrag{X2}{$X_{0}$}
\psfrag{X3}{$X_{3}$}
\psfrag{X4}{$X_{2}$}
\psfrag{V1}{$V_{4}$}
\psfrag{V2}{$V_{1}$}
\psfrag{V3}{$V_{3}$}
\psfrag{V4}{$V_{2}$}
\includegraphics[width=10cm]{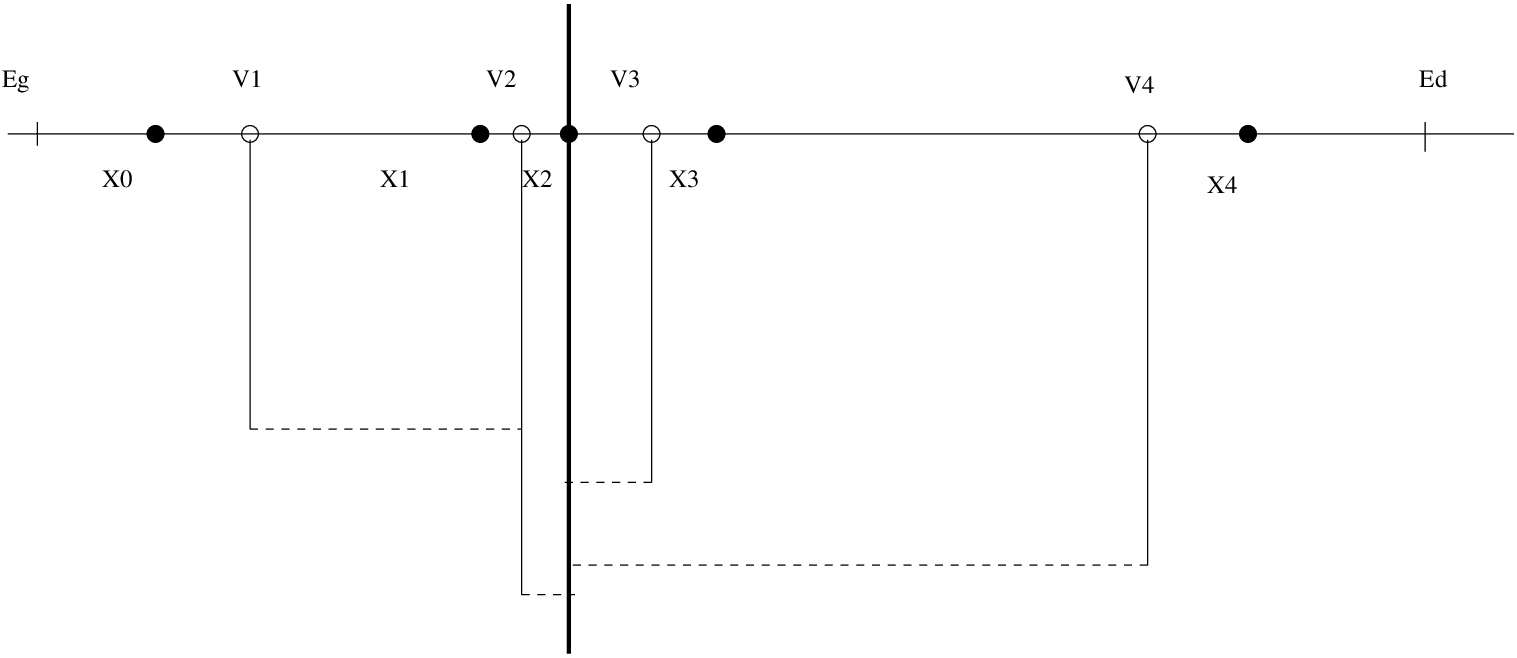}
\end{center}
\caption{An instance of the tree $\mathfrak{T}_4^\text{D}$.}
\label{fig:D1-4}
\end{figure}

\begin{figure}[H]
\begin{center}
\psfrag{Eg}{$-\eg$}
\psfrag{Ed}{$\ed$}
\psfrag{X0}{$X_{1}$}
\psfrag{X1}{$X_{4}$}
\psfrag{X2}{$X_{0}$}
\psfrag{X3}{$X_{3}$}
\psfrag{X4}{$X_{5}$}
\psfrag{X5}{$X_{2}$}
\psfrag{V1}{$V_{4}$}
\psfrag{V2}{$V_{1}$}
\psfrag{V3}{$V_{3}$}
\psfrag{V4}{$V_{5}$}
\psfrag{V5}{$V_{2}$}
\includegraphics[width=10cm]{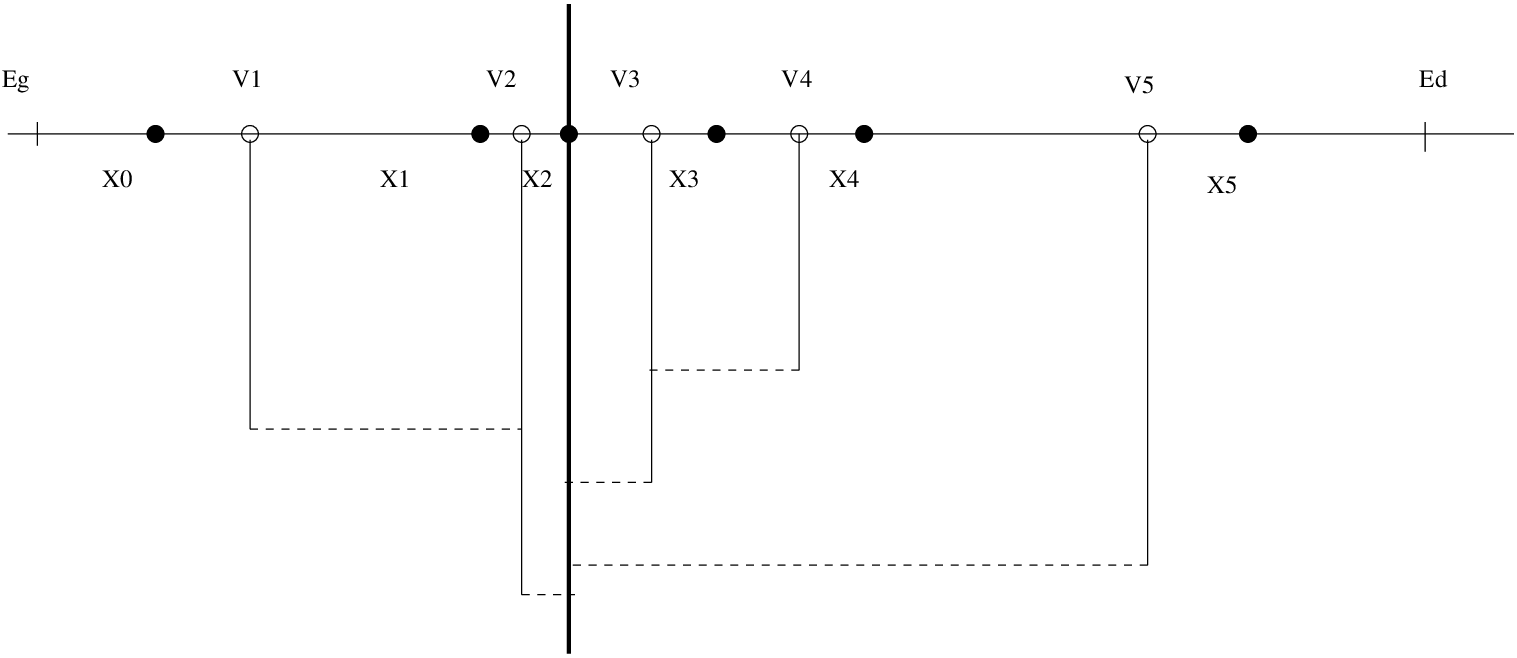}
\end{center}
\caption{An instance of the tree $\mathfrak{T}_5^\text{D}$. The length
  of the new branch attached to $V_{5}$ is conditioned to be less than
  the previous branch that was in the considered interval 
  attached to $V_{2}$} 
\label{fig:D1-5}
\end{figure}

Then we have the following result. 

\begin{lem}
   \label{lem:arbre-n-dynamic1}
Let $\theta>0$. The sequences of trees $(\mathfrak{T}_n^\text{D}, n\in \N^*)$ and
$(T_n, n\in \N^*)$ under $\P^{(\theta)}$ have
the same distribution. 
\end{lem}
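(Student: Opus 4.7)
The plan is to prove the result by induction on $n$, combining Lemma~\ref{lem:arbre-n} (which fixes the marginal at each $n$) with the Poisson-point-measure representation of Proposition~\ref{prop:ancestral}. Recall that conditionally on $(\eg,\ed)$ the ancestral process $\ca^\cn$ is a Poisson point measure on $(-\eg,\ed)\times(0,+\infty)$ with intensity $du\,|c'_\theta(\zeta)|d\zeta$, and that, by Proposition~\ref{prop:A=G}, it codes the full genealogical tree $\cg_0(\cf)$; the sample positions $X_1,X_2,\ldots$ can be realised as i.i.d.\ uniform on $(-\eg,\ed)$. The argument in the proof of Lemma~\ref{lem:arbre-n} identifies the spanned sub-tree $T_n$ with the tree associated to the ancestral process whose atoms are, for each interval between two consecutive positions in the ordered collection $\{-\eg\}\cup\{X_0,X_1,\ldots,X_n\}\cup\{\ed\}$, the max-$\zeta$ atom of $\ca^\cn$ in that interval.

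For the inductive step, assume that, conditionally on $(\eg,\ed,X_1,\ldots,X_{n-1})$, the data $(V_k,\zeta_k^\text{D})_{1\le k\le n-1}$ produced by the dynamic construction is distributed as the collection of max-$\zeta$ atoms of $\ca^\cn$ in the intervals associated to the first $n-1$ samples, so that $\mathfrak{T}_{n-1}^\text{D}$ equals $T_{n-1}$ in law jointly with the past. When $X_n$ is added, two cases arise. In step (ii), $X_n$ lies beyond all earlier samples on one side, so $I_n$ is disjoint from the intervals previously explored; by Poisson independence on disjoint sets, the restriction of $\ca^\cn$ to $I_n\times(0,+\infty)$ is independent of the past, and its max-$\zeta$ atom has $\zeta$-coordinate distributed as $\zeta^*_{|I_n|}$ (by \reff{eq:dist-zd}) with uniform position on $I_n$, matching the dynamic prescription. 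In step (iii), $X_n$ subdivides a previously explored interval whose max-$\zeta$ atom is $(V_{\kappa_n},\zeta_{\kappa_n}^\text{D})$ by the induction hypothesis. The standard Poisson-conditioning lemma asserts that the remaining atoms of $\ca^\cn$ in this interval form, independently of $(V_{\kappa_n},\zeta_{\kappa_n}^\text{D})$, a Poisson measure on the interval crossed with $(0,\zeta_{\kappa_n}^\text{D})$ with the same intensity $du\,|c'_\theta(\zeta)|d\zeta$. The max-$\zeta$ atom of this residual measure on the sub-interval $I_n$ (selected by the relative position of $X_n$ and $V_{\kappa_n}$) is therefore distributed as $\zeta^*_{|I_n|}$ conditioned on being less than $\zeta_{\kappa_n}^\text{D}$, with position uniform on $I_n$; this is exactly step (iii).

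The main obstacle is the Poisson-conditioning bookkeeping in case (iii): one must verify that, at each inductive step, the already-revealed pairs $(V_k,\zeta_k^\text{D})_{k<n}$ leave the residual Poisson measure on the unexplored region with the correct conditional distribution, so that the next max-$\zeta$ atom can indeed be read off as $\zeta^*_{|I_n|}$ (possibly conditioned on a threshold) with uniform location. Granted this standard property of Poisson point measures, combined with the identification of the max law via \reff{eq:dist-zd}, the induction closes. The base case $n=1$ reduces to step (ii) under $\cw^\text{D}_1$ and is handled directly by Lemma~\ref{lem:arbre-n}, which establishes the joint distributional identity $(\mathfrak{T}_n^\text{D})_{n\ge 1}\stackrel{d}{=}(T_n)_{n\ge 1}$ and concludes the proof.
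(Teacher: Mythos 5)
Your proof is correct and follows essentially the same route as the paper: the paper couples the dynamic construction with the ancestral process by defining $V''_{(k,n)}$ as the location of the max-$\zeta$ atom of the Poisson measure in each interval between consecutive order statistics of the samples, and asserts the resulting distributional identities ``by construction''. Your induction, using Poisson independence over disjoint intervals and the record-conditioning property (the atoms other than the maximum form a Poisson measure below the maximum, independently of it), is exactly the verification of those identities, so the two arguments coincide in substance.
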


\begin{proof}
  We  consider  $\sum  _{i\in  \ci}\delta_{(u_i,\zeta_i)}$  the  ancestral
  process     associated     to     the    Poisson     point     measure
  $\sum_{i\in  I}\delta_{(h_i,\varepsilon_i,e_i)}$  defined  in  Section
  \ref{sec:PPM}.   Let  $(X''_k, k\in  \N  ^*)$  be independent  uniform
  random variables on  $[-\eg, \ed]$. Set $X''_0=0$.  For  $n\ge 1$, let
  us denote  by $(X''_{(k,n)},  {0\le k\le n})$  the order  statistic of
  $(X''_0,\ldots,X''_n)$.

For every $n\ge 1$ and every $1\le k\le n$, we set $i_{k,n}$ the index in $\ci$ such that
\[
\zeta_{i_{k,n}}=\max _{X''_{(k-1,n)}\le u_i< X''_{(k,n)}}\zeta_i.
\]
Remark that this index exists  since, for every $\varepsilon>0$, the set
$\{i\in \ci,\ \zeta _i>\varepsilon\}$ is  a.s.~finite.  We set
$V''_{(k,n)}=u_{i_{k,n}}$ and notice that, by standard Poisson point measure properties, $V''_{(k,n)}$ is, conditionally given $(X''_0,\ldots,X''_n)$, uniformly distributed on $[X''_{(k-1,n)},X''_{(k,n)}]$. We define
\[
\ca''_n=\sum_{k=1}^n \delta_{(V''_{(k,n)}, \zeta_{i_{k,n}})}.
\]

By construction, it is easy to check that the order statistics
\[
X''_{(0,n)}<V''_{(1,n)}<X''_{(1, n)}< \cdots < V''_{(n, n)}< X''_{(n,
  n)}
\]
is distributed as 
\[
X_{(0,n)}<V_{(1,n)}<X_{(1, n)}< \cdots < V_{(n, n)}< X_{(n,
  n)}.
\]
For $1\leq k\leq n$, let $j_{k,n}\in \{1, \ldots, n\}$ be the index such
that $V_{(k,n)}=V_{j_{k,n}}$. 
By construction, we then deduce that $(((V_{(k,n)},
\zeta^\text{D}_{j_{k,n}}), 1\leq k\leq n), n\in \N^*)$ is distributed as $(((V''_{(k,n)},
\zeta_{i_{k,n}}), 1\leq k\leq n), n\in \N^*)$. This implies that the
sequence of ancestral processes $(\ca''_n, n\in \N^*)$ and $(\ca_n, n\in
\N^*)$ have the same distribution. Then use Proposition \ref{prop:A=G}
to get that the sequence of trees $(T''_n, n\in \N^*)$, with $T''_n$
associated to $\ca''_n$, is distributed as  
$(T_n, n\in \N^*)$. 
\end{proof}

\subsection{Dynamic simulation (II)}\label{sec:dynamic2}

In a sense, we had to introduce another random information corresponding
to the position $V_{n}$ of the  largest spine of the sub-tree containing
$X_{n}$. The construction  in this sub-section provides a  way to remove
this additional information (which is now  hidden) but at the expense to
possibly exchange the  new inserted branch with one of  its neighbor. In
what follows, $\text{H}$ stands for  hidden. An instance is provided for
$\mathfrak{T}^\text{H}_4$   and  $\mathfrak{T}^\text{H}_5$   in  Figures
\ref{fig:T4}, \ref{fig:T5g} and \ref{fig:T5d}.

Let $\theta>0$.
We build recursively a family of ancestral processes $(\ca_n^\text{H}, n\in \N)$,
with $\ca_0^\text{H}=0$ and  $\ca_n^\text{H}=\sum_{k=1}^n \delta_{(X_k, \zeta_{k,n}^\text{H})}$ for
$n\in \N^*$.

\begin{itemize}
\item[(i)] Let $\eg$,  $\ed$, $(X_n, n\in \N)$ and  $(\cx_n, n\in \N^*)$
  be defined  as in (i) of  Section \ref{sec:static}.   For  $n\in \N  ^*$, set
  $X_n^\text{g}=\max       \{x\in        \cx_n,       x<X_n\}$       and
  $X_n^\text{d}=\min  \{x\in  \cx_n,  x>X_n\}$.  For $n\in  \N  ^*$  and
  $\ell\in    \{\text{g},     \text{d}\}$,    define     the    interval
  $I_n^{\ell}= [X_n  \wedge X_n^\ell, X_n\vee X_n^\ell]$ and its length $|
I_n^{\ell}|=|X_n - X_n^\ell|$. 

We set $\cw_n^\text{H}=(\eg, \ed,  X_1, \ldots, X_n,  
\zeta^\text{H}_{1, n-1}, \ldots, \zeta^\text{H}_{n-1, n-1})$.

\item[(ii)] Assume  $n\geq 1$.  On the  event $\{X_n^\text{d}=\ed\}$ set
  $I_n=I_n^\text{g}$  and  on   the  event  $\{X_n^\text{g}=-\eg\}$  set
  $I_n=I_n^\text{d}$.  Conditionally on $\cw_n^\text{H}$,
  let  $\zeta_{n,n}^\text{H}$  be   distributed  as  $\zeta^*_\delta$,  see
  \reff{eq:dist-zd}, with  $\delta= |I_n|$;  and for $1\leq  k\leq n-1$,
  set $\zeta^\text{H}_{k,n}=\zeta^\text{H}_{k,n-1}$.

\item[(iii)]

  Assume $n\geq  2$.  We  work conditionally  on $\cw_n^\text{H}$.  
 We define:
\[
\pd=\frac{|I_n^\text{d}|}{|I_n^\text{d}|+|I_n^\text{g}|}
\quad\text{and}\quad
\pg=1-\pd=\frac{|I_n^\text{g}|}{|I_n^\text{d}|+|I_n^\text{g}|} 
 \cdot
\]

 \begin{itemize}
   \item[(a)] On the
  event  $\{0\leq  X_n^\text{g},\,X_n^\text{d}<\ed\}$,  there  exists  a
  unique integer  $\kappa_n^\text{d} \in  \{1, \ldots, n-1\}$  such that
  $X_{\kappa_n^\text{d}}  = X_n^\text{d}$.   For $1\leq  k\leq n-1$  and
  $k\neq                     \kappa_n^\text{d}$,                     set
  $\zeta^\text{H}_{n,k}        =         \zeta^\text{H}_{n-1,k}$.        Write
  $\zeta^\text{H}_n=\zeta^\text{H}_{n-1, \kappa_n^\text{d}}$. 

With probability $\pd$, set  $\zeta_{n,\kappa_n^\text{d}}^\text{H}= \zeta^\text{H}_{n}$ and 
let  $\zeta_{n,n}^\text{H}$ be distributed  as $\zeta^*_\delta$, with   $\delta=
  |I_n^\text{g}|$,     conditionally     on    being     less     than
  $\zeta^\text{H}_{n}$.

With probability $\pg$, set  $\zeta_{n,n}^\text{H}= \zeta^\text{H}_{n}$ and
let $\zeta^\text{H}_{n,
  \kappa_n^\text{d}}$ be distributed  as $\zeta^*_\delta$, with   $\delta=
  |I_n^\text{d}|$,     conditionally     on    being     less     than
  $\zeta^\text{H}_{n}$.

\item[(b)] On the
  event  $\{-\eg<  X_n^\text{g},\,X_n^\text{d}\leq 0\}$,  there  exists  a
  unique integer  $\kappa_n^\text{g} \in  \{1, \ldots, n-1\}$  such that
  $X_{\kappa_n^\text{g}}  = X_n^\text{g}$.   For $1\leq  k\leq n-1$  and
  $k\neq                     \kappa_n^\text{g}$,                     set
  $\zeta^\text{H}_{n,k}        =         \zeta^\text{H}_{n-1,k}$.        Write
  $\zeta^\text{H}_n=\zeta^\text{H}_{n-1, \kappa_n^\text{g}}$. 

With probability $\pg$, set  $\zeta_{n,\kappa_n^\text{g}}^\text{H}= \zeta^\text{H}_{n}$ and 
let  $\zeta_{n,n}^\text{H}$ be distributed  as $\zeta^*_\delta$, with   $\delta=
  |I_n^\text{d}|$,     conditionally     on    being     less     than
  $\zeta^\text{H}_{n}$.

With probability $\pd$, set  $\zeta_{n,n}^\text{H}= \zeta^\text{H}_{n}$ and
let $\zeta^\text{H}_{n,
  \kappa_n^\text{g}}$ be distributed  as $\zeta^*_\delta$, with   $\delta=
  |I_n^\text{g}|$,     conditionally     on    being     less     than
  $\zeta^\text{H}_{n}$.
\end{itemize}

\item[(iv)]  Let  $\mathfrak{T}^\text{H}_n$ be the tree corresponding to the ancestral process   $\ca_n^\text{H}=\sum_{k=1}^n \delta_{(X_k,  \zeta^\text{H}_{k,n})}$.
\end{itemize}

We have the next result. 

\begin{lem}
   \label{lem:arbre-n-dymanic2}
   Let       $\theta>0$.        The       sequences       of       trees
   $(\mathfrak{T}^\text{H}_n,  n\in  \N^*)$  and $(T_n,  n\in  \N^*)$  under
   $\P^{(\theta)}$ have the same distribution.
\end{lem}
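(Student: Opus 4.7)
The plan is to adapt the argument of the proof of Lemma \ref{lem:arbre-n-dynamic1}. I would construct an alternative sequence of ancestral processes $(\widetilde{\ca}_n, n\in \N^*)$ directly from the Poisson ancestral process of Proposition \ref{prop:ancestral} together with i.i.d.\ uniform sampling positions $(\widetilde{X}_k, k\in \N^*)$ on $[-\eg, \ed]$ (with $\widetilde{X}_0=0$), verify that $\widetilde{\ca}_n$ codes for the tree $T_n$, and finally verify that $(\widetilde{\ca}_n)$ and $(\ca_n^\text{H})$ have the same joint distribution.

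For the construction of $\widetilde{\ca}_n$: for each sub-interval $J$ of the partition of $[-\eg,\ed]$ induced by $\{\widetilde{X}_0,\ldots,\widetilde{X}_n\}$, let $M_J$ denote the maximum height among atoms of the Poisson ancestral process falling in $J$, and attach an atom of height $M_J$ at the right endpoint of $J$ if $J\subset [0,\ed]$, and at the left endpoint of $J$ if $J\subset [-\eg, 0]$. By the distance formula \reff{eq:dist-anc}, the tree $\mathfrak{T}(\widetilde{\ca}_n)$ coincides, as a rooted real tree with $n+1$ marked leaves (the $n$ sampled individuals together with the origin), with $T_n$, by an argument completely analogous to the proof of Proposition \ref{prop:A=G}.

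The bulk of the proof is the inductive verification that $(\widetilde{\ca}_n, n\in \N^*)$ and $(\ca_n^\text{H}, n\in \N^*)$ have the same joint distribution. At step $n$, assume $\widetilde{X}_n$ falls inside an interior sub-interval $J$ carrying the tall height $M_J$ (coupled under the induction hypothesis with $\zeta_n^\text{H}$ in simulation (II)). Standard Poisson uniformization asserts that, conditionally on $M_J$ and on $(\widetilde{X}_0, \ldots, \widetilde{X}_{n-1})$, the position of the maximum-height atom in $J$ is uniform on $J$, independent both of $M_J$ and of the remaining atoms in $J$, which form a Poisson process with intensity $du\,|c'_\theta(\eta)|\,\ind_{\{\eta<M_J\}}\,d\eta$. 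Therefore the maximum of the residual process over a sub-interval of length $\delta$ has the distribution of $\zeta^*_\delta$ conditioned on being less than $M_J$, while the probability that the maximum-height atom of $J$ lies to the right of $\widetilde{X}_n$ equals $\pd$. Comparing with step (iii)(a) of simulation (II) when $J\subset[0,\ed]$: the ``no exchange'' case of probability $\pd$ corresponds to the maximum-height atom remaining in the right sub-interval, preserving the right-endpoint label; the ``exchange'' case of probability $\pg$ corresponds to the maximum-height atom moving to the left sub-interval, whose right endpoint is the newly inserted $\widetilde{X}_n$, matching the label transfer to $\zeta^\text{H}_{n,n}$ in simulation (II). Case (b) on the negative side is symmetric with the left-endpoint labelling, and the boundary case (step (ii)) is straightforward since the relevant sub-interval carries no pre-existing tall height.

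The main obstacle is the careful bookkeeping of the labelling convention — which endpoint of each sub-interval carries the tall height — and verifying that the exchange mechanism of step (iii) exactly implements the relabelling forced when $\widetilde{X}_n$ splits a sub-interval that carries the maximum-height atom.
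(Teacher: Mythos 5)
Your proposal is correct and follows exactly the route the paper intends: the paper's own proof is only the one-line remark that the argument is the same as for Lemma \ref{lem:arbre-n-dynamic1} with the positions $V''_{(k,n)}$ of the maximal atoms treated as unobserved, and your marginalization over the (uniform) location of the argmax — yielding the probabilities $\pd$, $\pg$ for the exchange and the conditioned $\zeta^*_\delta$ law for the residual maximum — is precisely the detail being left to the reader. The endpoint-labelling convention you identify (right endpoint on $[0,\ed]$, left endpoint on $[-\eg,0]$) is the correct one forced by the half-open intervals in \reff{eq:dist-anc}.
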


\begin{proof}
The proof is left to the reader. It is in the same spirit as the proof of Lemma
\ref{lem:arbre-n-dynamic1}, but here we consider the random variables
$((V''_{(k,n)}, 1\leq k\leq n), n\in \N^*)$ as unobserved. 
\end{proof}

\begin{figure}[H]
\begin{center}
\psfrag{Eg}{$-\eg$}
\psfrag{Ed}{$\ed$}
\psfrag{X0}{$X_{1}$}
\psfrag{X1}{$X_{4}$}
\psfrag{X2}{$X_{0}$}
\psfrag{X3}{$X_{3}$}
\psfrag{X4}{$X_{2}$}
 \psfrag{X5}{$X_5$}
 \psfrag{z2}{$\zeta_{4,2}^\text{H}$}
\includegraphics[width=10cm]{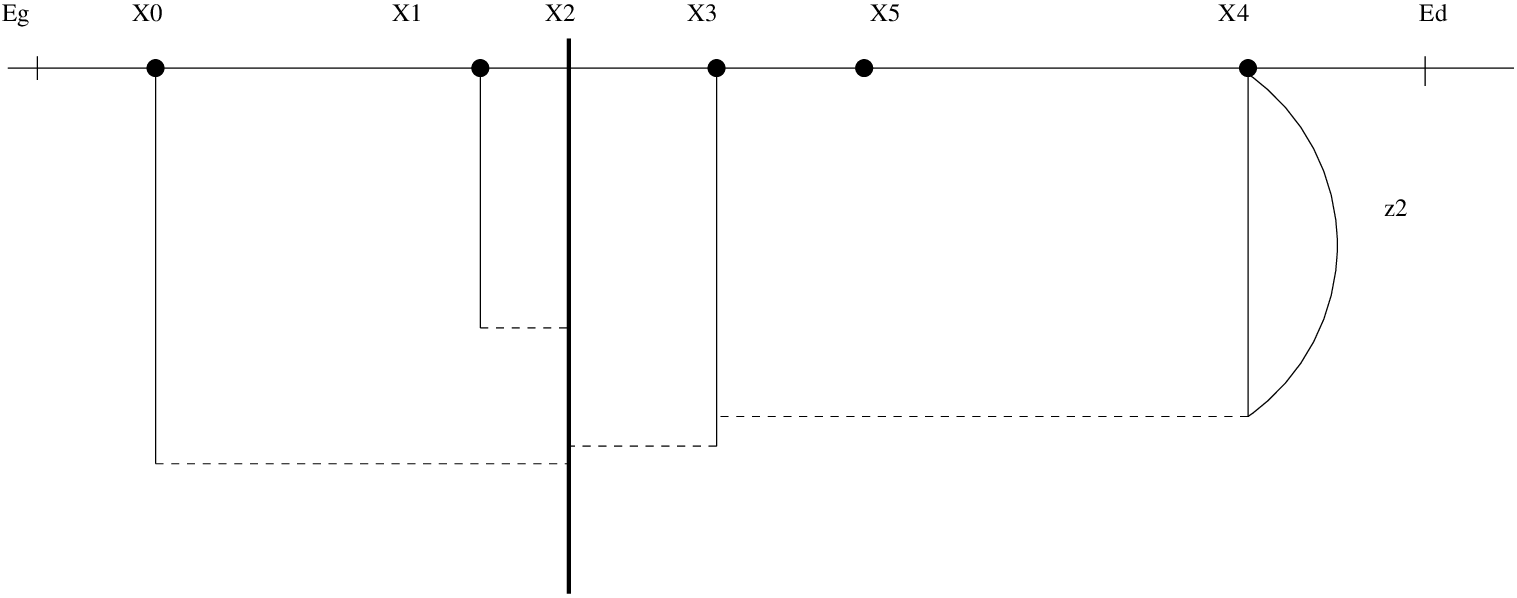}
\end{center}
\caption{An instance of the tree $\mathfrak{T}^\text{H}_4$ with the new individual $X_5$.}
\label{fig:T4}
\end{figure}

\begin{figure}[H]
\begin{center}
\psfrag{Eg}{$-\eg$}
\psfrag{Ed}{$\ed$}
\psfrag{X0}{$X_{1}$}
\psfrag{X1}{$X_{4}$}
\psfrag{X2}{$X_{0}$}
\psfrag{X3}{$X_{3}$}
\psfrag{X4}{$X_{5}$}
 \psfrag{X5}{$X_2$}
 \psfrag{z2}{$\zeta_{5,2}^\text{H}$}
\includegraphics[width=10cm]{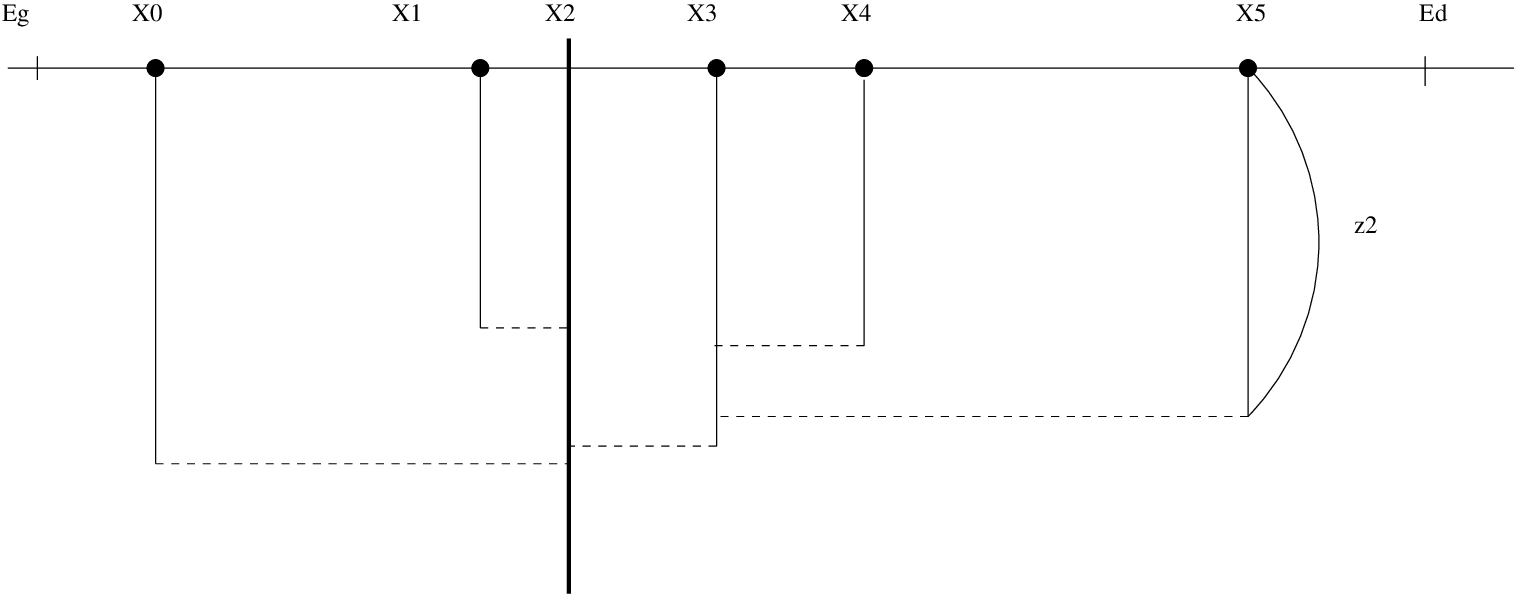}
\end{center}
\caption{An instance of the tree $\mathfrak{T}^\text{H}_5$  with
  $\mathfrak{T}^\text{H}_4$ given in Figure 
  \ref{fig:T4} and the event associated with $\pd$ (a new segment is
  attached to $X_{5}$).}
\label{fig:T5g}
\end{figure}

\begin{figure}[H]
\begin{center}
\psfrag{Eg}{$-\eg$}
\psfrag{Ed}{$\ed$}
\psfrag{X0}{$X_{1}$}
\psfrag{X1}{$X_{4}$}
\psfrag{X2}{$X_{0}$}
\psfrag{X3}{$X_{3}$}
\psfrag{X4}{$X_{5}$}
 \psfrag{X5}{$X_2$}
  \psfrag{z2}{$\zeta_{5,2}^\text{H}$}
\includegraphics[width=10cm]{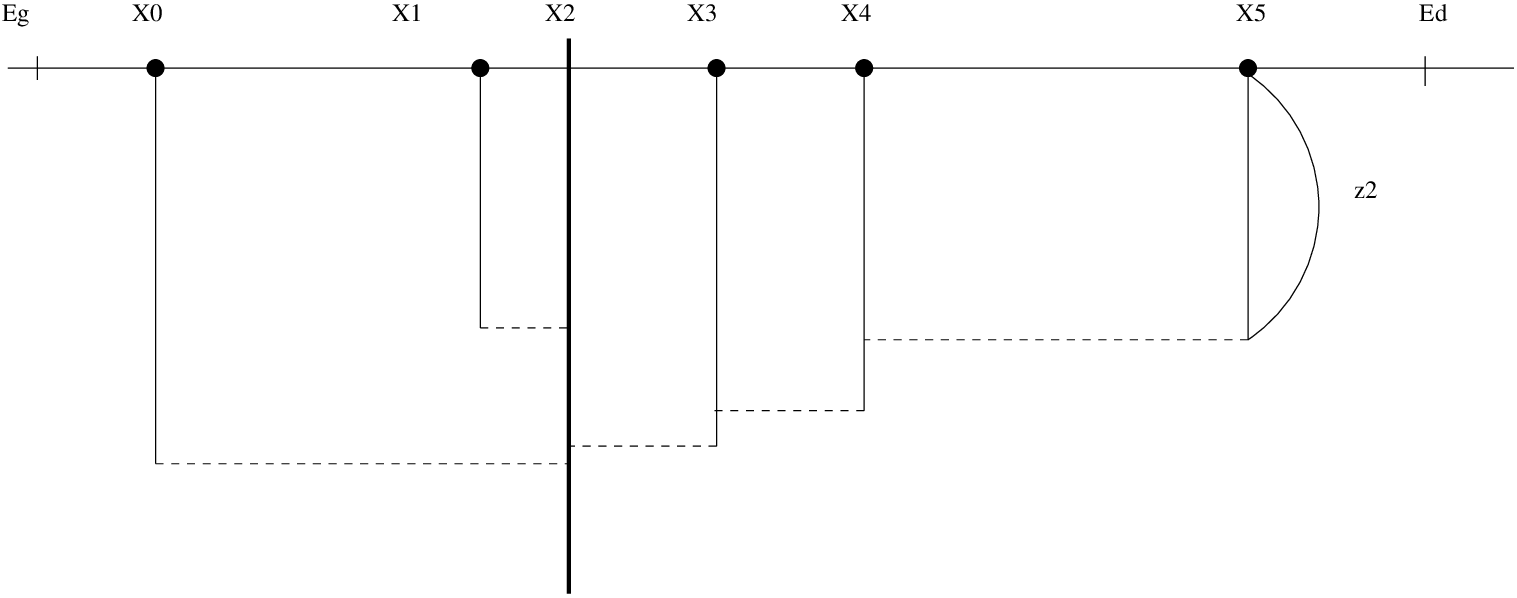}
\end{center}
\caption{An instance of the tree $\mathfrak{T}^\text{H}_5$  with
  $\mathfrak{T}^\text{H}_4$ given in Figure 
  \ref{fig:T4} and the event associated with $\pg$ (the segment
  previously attached to $X_2$ is now attached to $X_5$ and a new
  segment is attached to $X_2$).}
\label{fig:T5d}
\end{figure}

\subsection{Simulation of genealogical tree conditionally on its maximal
height}\label{sec:cond_simul}
Let $\cf^{(\theta)}=((\tau_i, h_i), \, i\in I)$ be a Brownian forest under
$\P^{(\theta)}$. Recall the definition of $A_0$ the time to the MRCA of
the population living at time $0$ given in \reff{eq:def-Ah2}. 
The goal of this section is
to  simulate  the  genealogical  tree  $T_n$  of  $n$  
individuals uniformly sampled in the population living at time $0$,
conditionally given the time to the MRCA of the whole population is $h$,
that is given  $A_0=h$.  \medskip

Let $\ca(du,d\zeta)=\sum_{j\in \ci}\delta_{(u_j,\zeta_j)}(du,d\zeta)$ 
be the  ancestral process of Definition  \ref{def:anc}. Recall the
notations    $\eg,\ed$    from     Sectionj   \ref{sec:PPM}.     Let
$\zeta_\text{max}=\sup  \{\zeta_j,\ j\in  \ci\}$ and  define the  random
index $J_0\in  \ci$ such that $\zeta_\text{max}=\zeta_{J_0}$.  Note that
$J_0$  is  well  defined  since   for  every  $\varepsilon>0$,  the  set
$\{j\in \ci, \zeta_j>\varepsilon\}$ is finite.  We set $X=u_{J_0}\in
(-\eg, \ed)$.
Remark  that  $\zeta_\text{max} $ is distributed as $A_0$. 

For $r\in \R$, let $r_+=\max(0,r)$ and $r_-=\max(0, -r)$ be respectively
the positive and negative part of $r$. 
The proof of the next lemma is postponed to the end of this section. 
\begin{lem}
\label{lem:EEX}
Let $\theta>0$. Under $\P^{(\theta)}$, conditionally given $\zeta_\text{max}=h$, the random
variables $E_g+X_-$, $|X|$, $E_d-X_+$ and $\ind_{\{X\geq 0\}}$ are independent;  $E_g+X_-$, $|X|$ and $E_d-X_+$
are exponentially distributed with parameter $2\theta +c_\theta(h)$ and
$\ind_{\{X\geq 0\}}$ is Bernoulli $1/2$. 
\end{lem}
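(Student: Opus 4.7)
Plan. The proof is a direct density computation. I would first assemble the joint conditional density of $(E_g, E_d, X)$ given $\zeta_{\max}=h$ in closed form, then verify the claim by a change of variables that corresponds to the three sub-interval lengths induced by the partition $\{0, X\}$ of $(-E_g, E_d)$.

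\textbf{Step 1 (joint conditional density).} Proposition~\ref{prop:ancestral} gives that $(E_g,E_d)$ are independent $\mathrm{Exp}(2\theta)$; conditionally on $(E_g,E_d)$, $\ca$ is a Poisson point measure on $(-E_g,E_d)\times(0,\infty)$ with intensity $du\,|c'_\theta(\zeta)|\,d\zeta$. Hence the number of atoms with $\zeta>h$ is Poisson of mean $(E_g+E_d)c_\theta(h)$, whence
\[
\P(\zeta_{\max}\le h\mid E_g,E_d)=\exp\bigl(-(E_g+E_d)c_\theta(h)\bigr),
\]
and $\zeta_{\max}$ has density $(E_g+E_d)|c'_\theta(h)|\exp(-(E_g+E_d)c_\theta(h))$. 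Conditionally on $(E_g,E_d,\zeta_{\max}=h)$, the location $X=u_{J_0}$ of the (a.s. unique) $\zeta$-maximum is uniform on $(-E_g,E_d)$ by the standard property of Poisson point measures with product intensity. Multiplying the four factors, the $(E_g+E_d)$ in the density of $\zeta_{\max}$ exactly cancels the $1/(E_g+E_d)$ from the uniform law of $X$; normalising via $\iint (e_g+e_d)e^{-\lambda(e_g+e_d)}\,de_g\,de_d=2/\lambda^{3}$, the conditional density of $(E_g,E_d,X)$ given $\zeta_{\max}=h$ is
\[
\frac{\lambda^{3}}{2}\exp\bigl(-\lambda(E_g+E_d)\bigr)\,\ind_{\{E_g,E_d>0,\ -E_g<X<E_d\}},\qquad \lambda:=2\theta+c_\theta(h).
\]

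\textbf{Step 2 (three sub-interval lengths).} The points $\{0,X\}$ split $(-E_g,E_d)$ into three sub-intervals whose lengths are, with appropriate sign, exactly $A=E_g+X_-$, $B=|X|$, $C=E_d-X_+$; in particular $A+B+C=E_g+E_d$ and $B=|X|$. I treat $\{X\ge 0\}$ and $\{X<0\}$ separately. On $\{X\ge 0\}$, $(A,B,C)=(E_g,X,E_d-X)$, with inverse $E_g=A$, $X=B$, $E_d=B+C$ and Jacobian $1$; on $\{X<0\}$, one reads $(A,B,C)=(E_g+X,-X,E_d)$ after tracking signs, with inverse $E_g=A+B$, $X=-B$, $E_d=C$ and Jacobian $1$. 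In both cases the admissibility constraints (in particular $X>-E_g$ on $\{X<0\}$) translate into the pure positivity $A,B,C>0$, and substituting $A+B+C$ for $E_g+E_d$ yields the \emph{same} density
\[
\frac{\lambda^{3}}{2}\exp\bigl(-\lambda(A+B+C)\bigr)\,\ind_{\{A,B,C>0\}}
\]
on each of the two events.

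\textbf{Step 3 (conclusion).} Each event has mass $(\lambda^{3}/2)\int_{(0,\infty)^{3}}e^{-\lambda(a+b+c)}\,da\,db\,dc=1/2$, so $\ind_{\{X\ge 0\}}$ is Bernoulli$(1/2)$. On each event the joint density factorises as $\tfrac12\cdot\lambda e^{-\lambda A}\cdot\lambda e^{-\lambda B}\cdot\lambda e^{-\lambda C}$, which simultaneously gives the three Exp$(\lambda)$ marginals and the mutual independence of the four variables $A,B,C,\ind_{\{X\ge 0\}}$.

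The only delicate point is in Step~2: one has to verify that on $\{X<0\}$ the constraint $X>-E_g$ becomes exactly $A>0$ after the change of variables, so that both halves contribute densities on the \emph{same} positive orthant. This geometric gluing of the two pieces into a single product density is precisely what produces the independence of the sign indicator from the continuous variables; with any other natural choice of three variables (e.g.\ the two sub-interval lengths $E_g+X$, $E_d-X$ and $X$ itself) the two halves would sit on different regions and the product structure would fail.
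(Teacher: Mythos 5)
Your proof follows the same route as the paper's: both compute the conditional density of $(\eg,\ed,X)$ given $\zeta_\text{max}=h$ (your factorisation through $\P(\zeta_\text{max}\le h\mid \eg,\ed)=\expp{-(\eg+\ed)c_\theta(h)}$ and the uniform location of the maximising atom is the same computation as the paper's explicit density manipulation) and then conclude by a unit-Jacobian change of variables on each of $\{X\ge 0\}$ and $\{X<0\}$. The substance is correct.

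One point deserves to be made explicit rather than buried in ``after tracking signs''. With the convention stated just before the lemma, $r_-=\max(0,-r)\ge 0$, so on $\{X<0\}$ the quantity $\eg+X_-$ is literally $\eg-X=\eg+|X|$, \emph{not} the $\eg+X$ you use. These differ by $2|X|$, and the difference matters: under the conditional density $\frac{\lambda^3}{2}\expp{-\lambda(a+b+c)}$ in your coordinates $(a,b,c)=(\eg+X,-X,\ed)$, one gets $\E[\eg+|X|\mid X<0,\zeta_\text{max}=h]=3/\lambda$, so $\eg+|X|$ is not exponential of parameter $\lambda$ and the literal statement would fail on $\{X<0\}$. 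The variable for which the lemma holds is the leftmost gap length $\eg+\min(0,X)=\eg-X_-$, which is exactly what you computed with, and which is also what the conditional simulation in Subsection~\ref{sec:cond_simul} uses (there $\eg=E_1+E_2$, $X=-E_2$, so $E_1=\eg+X$). So your Step~2 is the correct computation and in effect corrects a sign slip in the printed statement; you should say so explicitly, since as written your identification of $(A,B,C)$ with $(\eg+X_-,|X|,\ed-X_+)$ on $\{X<0\}$ contradicts the paper's own definition of $X_-$. Your closing remark about why the two halves must land on the same positive orthant is precisely the right diagnostic for this issue.
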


Let $h>0$ be fixed. 
For $\delta>0$, let $\zeta^{*,h}_\delta$ be a positive random variable
distributed as $\zeta^*_\delta$ conditionally on $\{\zeta^*_\delta\leq
h\}$,  \textit{i.e.}~for $0\le u\le h$:
\[
\P(\zeta^{*,h}_\delta \le u)=\P(\zeta_\delta^*\le
u\bigm|\zeta_\delta^*\le h)=\expp{-\delta(c_\theta(u)-c_\theta(h))}.
\]
Then the static simulation runs as follows.
\begin{itemize}
\item[(i)]  Simulate three  independent  random variables  $E_1,E_2,E_3$
  exponentially  distributed with  parameter $2\theta+c_\theta(h)$,  and
  another independent Bernoulli variable $\xi$ with parameter $1/2$.  If
  $\xi=0$,  set $\eg=E_1,\  X=E_2,\  \ed=E_2+E_3$, and  if $\xi=1$,  set
  $\eg=E_1+E_2,\ X=-E_2,\ \ed=E_3$. Let $X_k$  and $\cx_k$ be defined as
  in (i) of Section \ref{sec:static} for $1\leq k\leq n$.

\item[(ii)] Let the intervals $I_k^\text{S}$ be defined as in (ii) of Section
  \ref{sec:static} for $1\leq k\leq n$. 

\item[(iii)] Conditionally on $(\eg, \ed,  X, X_1, \ldots, X_n)$, let 
  $(\zeta_{k}^{h}, 1\leq k\leq n)$  be independent random variables such
  that, for $1\leq k\leq n$,  $\zeta_{k}^{h}$ is distributed as
  $\zeta^{*,h}_\delta$     with     $\delta=    |I^\text{S}_{k}|$     if
  $X    \not    \in     I_k^\text{S}$;    and    $\zeta_{k}^{h}=h$    if
  $X   \in   I_k^\text{S}$.    Consider  the   tree   $\mathfrak{T}^h_n$
  corresponding         to         the         ancestral         process
  $\ca_n^h=\sum_{k=1} \delta_{(X_k, \zeta_{k}^{h})}$.

\end{itemize}

The proof of the following result which relies on Lemma \ref{lem:EEX} is similar
to the one of Lemma~\ref{lem:arbre-n}, and is not reproduced here.
\begin{lem}
   \label{lem:arbre-h}
Let $\theta>0$, $h>0$  and  $n\in \N^*$. The tree $\mathfrak{T}_n^h$ is distributed as
$T_n$ under $\P^{(\theta)}$ conditionally given $A_0=h$. 
\end{lem}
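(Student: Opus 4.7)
The plan is to mirror the proof of Lemma~\ref{lem:arbre-n}, inserting the conditioning on $A_0=h$ at each step and invoking Lemma~\ref{lem:EEX} to handle the resulting distribution of the endpoints and of the top of the spine.

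First, I would recall from Proposition~\ref{prop:ancestral} that, conditionally on $(\eg,\ed)$, the ancestral process $\ca^\cn$ is a Poisson point measure on $(-\eg,\ed)\times\R_+$ with intensity $du\,|c'_\theta(\zeta)|d\zeta$, and that $A_0$ coincides with $\zeta_{\max}=\sup\{\zeta_j:j\in\ci\}$. The next step is the standard Poisson decomposition under conditioning on the maximum: conditionally on $\{\zeta_{\max}=h\}$ together with $(\eg,\ed)$, the measure $\ca^\cn$ splits into the distinguished atom $(X,h)$, with $X$ uniform on $(-\eg,\ed)$, together with an independent Poisson point measure $\cb$ on $(-\eg,\ed)\times(0,h)$ with the same intensity $du\,|c'_\theta(\zeta)|d\zeta$. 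Combined with Lemma~\ref{lem:EEX}, which identifies the conditional joint law of $(\eg,\ed,X)$ given $A_0=h$, this reproduces exactly step~(i) of the simulation.

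Next, I would follow the contour-process argument from the proof of Lemma~\ref{lem:arbre-n}, interpreting the $n$ individuals sampled uniformly in the extant population as $n$ iid uniform points on $[-\eg,\ed]$ in the local-time scale, which yields the positions $X_1,\ldots,X_n$ and the intervals $I_k^\text{S}$ of step~(ii). The branch attached to $X_k$ in the genealogical tree has length equal to $\max\{\zeta_j:u_j\in I_k^\text{S}\}$. By the decomposition above, this maximum is either $h$, when $X\in I_k^\text{S}$, or the maximum of $\cb$ over $I_k^\text{S}\times(0,h)$ otherwise. In the latter case, Poisson properties yield a distribution equal to that of $\zeta^*_\delta$ conditioned on being at most $h$ with $\delta=|I_k^\text{S}|$, i.e.~$\zeta^{*,h}_{|I_k^\text{S}|}$. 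Disjointness of the $I_k^\text{S}$'s yields independence of these conditional maxima across the $k$'s with $X\notin I_k^\text{S}$, via the standard independence of restrictions of a Poisson point measure to disjoint sets. This matches step~(iii) exactly.

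Finally, applying Proposition~\ref{prop:A=G} to the resulting ancestral process $\ca_n^h=\sum_k\delta_{(X_k,\zeta_k^h)}$ identifies its tree $\mathfrak{T}_n^h$ with the subtree of $\cg_0(\cf)$ spanned by the $n$ uniformly sampled extant individuals, conditionally on $A_0=h$, that is, with $T_n$ under $\P^{(\theta)}(\,\cdot\mid A_0=h)$. The only genuinely delicate ingredient is the Poisson-max decomposition together with Lemma~\ref{lem:EEX}; the remainder is the same bookkeeping that was carried out in the proof of Lemma~\ref{lem:arbre-n} and need not be reproduced.
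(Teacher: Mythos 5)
Your proof is correct and follows exactly the route the paper intends: the paper does not actually write this proof out, stating only that it is ``similar to the one of Lemma~\ref{lem:arbre-n}'' and ``relies on Lemma \ref{lem:EEX}''. The Poisson-max decomposition you supply for $\ca^\cn$ given $\zeta_{\max}=h$ (a distinguished atom $(X,h)$ at a uniform position, plus an independent Poisson point measure with the intensity restricted to $(0,h)$, yielding the conditional maxima $\zeta^{*,h}_{|I_k^{\mathrm{S}}|}$ over the disjoint intervals) is precisely the one ingredient beyond Lemma~\ref{lem:arbre-n} that the paper leaves implicit, and the rest is the same bookkeeping.
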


Notice that the height of $\mathfrak{T}_n^h$ is less than or equal to
$h$. When strictly less than $h$, it means that no individual of the
oldest family has been sampled. 

\begin{proof}[Proof of Lemma \ref{lem:EEX}]
By Proposition \ref{prop:ancestral}, the pair $E=(\eg,\ed)$ under
$\P^{(\theta)}$ has density: 
\[
f_E(\eeg, \eed)=(2\theta)^2\expp{-2\theta(\eeg+\eed)}\ind_{\{\eeg\ge 0,
  \eed\ge 0\}}.
\]
Moreover, by standard results on Poisson point measures, the conditional
density  of the pair $(X,\zeta_\text{max})$ given $(\ed,\eg)=(e_g,e_d)$
exists and is:
\begin{align*}
f_{X,\zeta_\text{max}}^{E=(\eeg,\eed)}(x,h) 
&=\frac{1}{\eeg+\eed}\ind_{[-\eeg,\eed]}(x)\,
 (\eeg+\eed)\, |c'_\theta(h)|\expp{-c_\theta(h)(\eeg+\eed)}\ind_{\{h\ge 0\}}\\
& =\ind_{[-\eeg,\eed]}(x)\,
  |c'_\theta(h)|\expp{-c_\theta(h)(\eeg+\eed)}\ind_{\{h\ge 0\}}. 
\end{align*}
We deduce that the vector $(\eg,\ed,X,\zeta_\text{max})$ has density:
\[
f(\eeg,\eed,x,h)=(2\theta)^2|c'_\theta(h)|\expp{-(2\theta+c_\theta(h))(\eeg+\eed)}\ind_{\{  
  \eeg\ge 0, \, \eed\ge 0, \, -\eeg\le x\le \eed, \, h\ge 0\}}
\]
and that the random variable $\zeta_\text{max}$ has density:
\begin{align*}
f_{\zeta_\text{max}}(h) 
& =\int (2\theta)^2|c'_\theta(h)|\expp{-(2\theta+c_\theta(h))(\eeg+\eed)}
\ind_{\{
  \eeg\ge 0, \, \eed\ge 0, \, -\eeg\le x\le \eed, \, h\ge 0\}}
\,d\eeg\,d\eed\,dx\\
& =(2\theta)^2|c'_\theta(h)|\frac{2}{(2\theta+c_\theta(h))^3}\, \ind_{\{h\ge 0\}}.
\end{align*}
Therefore, the conditional density of the vector $(\eg,\ed,X)$ given
$\zeta_\text {max}=h$ is:
\[
f_{E,X}^{\zeta_\text{max}=h}(\eeg,\eed,x)=\frac{1}{2}(2\theta+c_\theta(h))^3
\expp{-(2\theta+c_\theta(h))(\eeg+\eed)}\ind_{\{\eeg\ge 
  0, \, \eed\ge 0, \, -\eeg\le x\le \eed\}}.
\]
For any nonnegative measurable function $\varphi$, we have:
\begin{multline*}
\E^{(\theta)}[ \varphi(\eg+X_-,|X|,\ed-X_+)\ind_{\{X\geq 0\}}
\bigm|\zeta_\text{max}=h] \\    
\begin{aligned}
& =\E^{(\theta)}[\varphi(\eg,X,\ed-X)\ind_{\{X\ge
  0\}}\bigm|\zeta_\text{max}=h]\\ 
& =\int
\varphi(\eeg,x,\eed-x)\frac{1}{2}(2\theta+c_\theta(h))^3\expp{-(2\theta+c_\theta(h)) 
(\eeg+\eed)}\ind_{\{\eeg\ge 0, \, \eed\geq x\geq 0\}}\,d\eeg\, d\eed\,
dx\\ 
& =\int
\varphi(e_1,e_2,e_3)\frac{1}{2}(2\theta+c_\theta(h))^3\expp{-(2\theta+c_\theta(h))(e_1+e_2+e_3)}\ind_{\{e_1\ge
  0, \, e_2\ge 0, \, e_3\ge 0\}}\, de_1\,de_2\, de_3,
\end{aligned}
\end{multline*}
using an  obvious change of variables. Similarly, we get:
\begin{multline*}
\E^{(\theta)}[ \varphi(\eg+X_-,|X|,\ed-X_+)\ind_{\{X<0\}}
\bigm|\zeta_\text{max}=h]\\
=\E^{(\theta)}[ \varphi(\eg+X_-,|X|,\ed-X_+)\ind_{\{X\geq 0\}}
\bigm|\zeta_\text{max}=h].
\end{multline*}
This proves the lemma.
\end{proof}

\section{Renormalized total length of the genealogical tree}
\label{sec:total-l}

Let  $\cf^{(\theta)}=((h_i,\tau_i),  \,  i\in  I)$  be  a  Brownian  forest  under
$\P^{(\theta)}$ with $\theta>0$. Recall that  the tree $\cf^{(\theta)}_{(-\infty ,  0]}$ belongs to
$\T_1$.  
For a forest $\bff\in\T_1$, recall that $\cz _h(\bff)$ denotes the set of vertices of $\cf^{(\theta)}_{(-\infty,0]}$ at level $h$. We shall
also                                                            consider
$\cz_h^*(\bff)=\cz_h(\bff)  \bigcap  \cs(\bff_{(-\infty  ,  h]})^c$  the
extant population at  time $h$ except the point on  the semi-infinite branch
$(-\infty       ,       h]$.        For     $r\leq       h$, we define the set of ancestors
at time $r$ in the past of  the extant population at time $h$ forgetting
the individual in the infinite spine:
\begin{equation}
   \label{eq:defcMst}
\cm_r^h(\bff)=\cg_h(\bff)\bigcap \cz_{r}^*(\bff)
 \end{equation} 
 and its cardinality
\begin{equation}
   \label{eq:defMst}
M_r^h(\bff)=\Card(\cm_r^h(\bff)).
 \end{equation} 
   We also define the  time to the
MRCA of $\cz_t(\cf^{(\theta)})$ as
\begin{equation}
   \label{eq:def-Ah2}
A_t=t-\sup\left\{r\leq t;\, M_r^t =0\right\}. 
\end{equation}

We want  to define the length  of the genealogical tree  $\cg_t(\cf^{(\theta)})$ of
all  extant  individuals  at  time  $t$ (which  is  a.s.~infinite)  by
approximating this  genealogical tree  by trees  with finite  length and
take compensated limits.   Without loss of generality we  can take $t=0$
(since  the distribution  of the  Brownian forest  is invariant  by time
translation).\medskip

Two approximations may be considered here.  The first one is to consider
for  $\varepsilon>0$  the  genealogical  tree  of  individuals  at  time
$t-\varepsilon$,  with descendants  at time  $t$, and  let $\varepsilon$
goes down to 0.  We define the  total length of the genealogical tree of
the current population up to $\varepsilon>0$ in the past as:
\begin{equation}\label{eq:L'eps}
L_\varepsilon=\int_\varepsilon^{\infty } M_{-s}^0 \, ds.
\end{equation}
Set $L=(L_\varepsilon, \varepsilon>0)$. According to \cite{bd:tl}, we have $\E[L_\varepsilon|Z_0]= -
Z_0\log(2\beta \theta \varepsilon)/\beta+O(\varepsilon)$ (see also
\reff{eq:Le1} as $\tilde L_\varepsilon$ is distributed as $L_\varepsilon$), and 
 that the sequence $(L_\varepsilon -
\E[L_\varepsilon|Z_0], \varepsilon>0)$ converges a.s.~as $\varepsilon$ goes
down to zero towards a limit say $\cl$. We recall \reff{eq:def-Z-f}: 
\[
\E\left[\expp{-\lambda \cl}|Z_0\right]
=\expp{2\theta Z_0 \, \varphi(\lambda/(2\beta\theta))}, 
\quad\text{with}\quad
\varphi(\lambda )=\lambda \int_0^1 \frac{1- v^\lambda}{1-v} \, dv
\quad\text{ for all } \lambda>0.
\]

The  second  approximation  consists  in looking  at  the  genealogical  tree
associated with  $n$ individuals picked  at random in the  population at
time $0$.   Recall Definition \eqref{eq:rZdef} of $\rZ_h$.
Let  $(X_k, k\in \N^*)$ be, conditionally
on           $\cf^{(\theta)}$,      independent random variables with distribution 
$\rZ_0(dx)/Z_0$. This models 
individuals uniformly
chosen among the  population living at time $0$.   Define
the ancestors of $X_1, \ldots, X_n$ at time $s< 0$ as:
\[
\cm_s^{(n)}(\cf^{(\theta)})=\{x\in \cm_s^0(\cf^{(\theta)}); \, x\prec X_i \text{ for some } 1\leq i\leq n\}, 
\]
and $M_s^{(n)}=\Card(\cm_s^{(n)}(\cf^{(\theta)}))$ its cardinality. 
We define the total length of the genealogical tree of $n$ individuals
uniformly chosen in the current
population as:
\begin{equation}\label{eq:Lambda'n}
\Lambda_n=\int_0^{\infty } M_{-s}^{(n)} \, ds.
\end{equation}
Set $\Lambda=(\Lambda_n, n\in \N^*)$. The next theorem states that the
two approximations give the same  a.s.~limit.

\begin{theo}
   \label{thm:cvLn}
   The                                                          sequence
   $\left(\Lambda_n - \E[\Lambda_n|Z_0],  n\in \N ^*\right)$ converges
   a.s.~and in $L^2$ towards $\cl$ as $n$ tends to $+\infty$. And we
   have
   $\E[\Lambda_n|Z_0]=\frac{Z_0}{\beta}  \log\left(\frac{n}{2\theta Z_0
     }\right) +R_n$, with $R_n= O(n^{-1}\log(n))$  and $\E[|R_n|]=O(n^{-1}\log(n))$.
\end{theo}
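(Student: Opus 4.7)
My plan is to exploit the static simulation of Lemma~\ref{lem:arbre-n} to represent
\begin{equation*}
\Lambda_n \stackrel{(d)}{=} \sum_{k=1}^n \zeta_k^\text{S},
\end{equation*}
where, conditionally on $(\eg,\ed,X_1,\ldots,X_n)$, the variables $\zeta_k^\text{S}$ are independent with $\zeta_k^\text{S}\sim \zeta^*_{|I_k^\text{S}|}$ of law \eqref{eq:dist-zd}. This turns $\Lambda_n$ into a conditionally independent sum governed by the explicit law of $\zeta^*_\delta$ and the joint distribution of the spacings $(|I_k^\text{S}|)$ of a uniform sample on $[-\eg,\ed]$ augmented by $\{0\}$. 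All subsequent estimates rely on this reduction.

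To compute the conditional expectation, set $g(\delta)=\E[\zeta^*_\delta]=\int_0^\infty (1-\expp{-\delta c_\theta(h)})\,dh$. The change of variables $u=\delta c_\theta(h)$ followed by partial fractions and the Frullani-type identity $\int_0^\infty(1-\expp{-u})[1/u-1/(u+a)]\,du=\log a+\gamma+\expp{a}E_1(a)$ yield the expansion
\begin{equation*}
g(\delta) = \frac{\delta}{\beta}\log\!\left(\frac{1}{2\theta\delta}\right) + \frac{(1-\gamma)\,\delta}{\beta} + O(\delta^2)\qquad(\delta\to 0),
\end{equation*}
with $\gamma$ the Euler--Mascheroni constant. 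By exchangeability, $\E[\Lambda_n\mid Z_0]=n\,\E[g(|I_1^\text{S}|)\mid Z_0]$, and the rescaled spacings $n|I_k^\text{S}|/Z_0$ converge in distribution to exponential random variables of mean $1$. Integrating $g$ against the limiting spacing law, the constant $(1-\gamma)/\beta$ is absorbed through $\E[-E\log E]=\gamma-1$ for $E\sim\mathrm{Exp}(1)$, producing $\E[\Lambda_n\mid Z_0]=(Z_0/\beta)\log(n/(2\theta Z_0))+O(n^{-1}\log n)$.

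For $L^2$ convergence I couple all $\Lambda_n$ on a single probability space via nested samples $X_1,X_2,\ldots$ and control $\E[(\Lambda_m-\Lambda_n)^2\mid Z_0]$ through the conditional-independence structure. The contribution $\sum_k\Var(\zeta^*_{|I_k^\text{S}|})$ is bounded by $O((\log n)^2/n)$, using $\Var(\zeta^*_\delta)=O(\delta(\log\delta)^2)$ as $\delta\to 0$; the spacing-fluctuation term has the same order. Hence $\Lambda_n-\E[\Lambda_n\mid Z_0]$ is Cauchy in $L^2$. Almost sure convergence along the dyadic subsequence $n=2^p$ then follows by Borel--Cantelli, and oscillations between consecutive dyadic values are controlled by a martingale-type inequality exploiting the nested sampling. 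To identify the limit with $\cl$, I compute the conditional Laplace transform
\begin{equation*}
\E\!\left[\expp{-\lambda(\Lambda_n-\E[\Lambda_n\mid Z_0])}\;\Big|\; Z_0\right] = \expp{\lambda\E[\Lambda_n\mid Z_0]}\,\E\!\left[\prod_{k=1}^n \E[\expp{-\lambda\zeta^*_{|I_k^\text{S}|}}\mid |I_k^\text{S}|] \;\Big|\; Z_0\right],
\end{equation*}
substitute the explicit density of $\zeta^*_\delta$ from \eqref{eq:dist-zd}, and pass to the limit as a Riemann sum against the spacings' distributional limit; the log-singularity of $\log\E[\expp{-\lambda\zeta^*_\delta}]$ at $\delta=0$ exactly cancels the centering $\lambda\E[\Lambda_n\mid Z_0]$ and leaves the integral defining $\varphi(\lambda/(2\beta\theta))$, matching $\expp{\theta Z_0\,\varphi(\lambda/(2\beta\theta))}$.

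The main obstacle is this last step: the expansion of $g$ and the expansion of $\log\E[\expp{-\lambda\zeta^*_\delta}]$ both contain matching logarithmic singularities plus subleading constants, and the cancellation must be tracked uniformly in $\lambda$ through the Riemann-sum limit to recover $\varphi$ with the correct prefactor. An alternative route is to bypass the Laplace transform altogether by coupling $\Lambda_n$ with $L_{\varepsilon_n}$ for a well-chosen random $\varepsilon_n$ of order $Z_0/(\beta n)$ and transferring the convergence of $L_\varepsilon-\E[L_\varepsilon\mid Z_0]$ from \cite{bd:tl} directly to $\Lambda_n$; this shifts the difficulty to controlling $\Lambda_n-L_{\varepsilon_n}$ and the mismatch between the two centerings, which is of essentially the same order of delicacy.
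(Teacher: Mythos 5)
Your reduction to the static representation $\Lambda_n\stackrel{(d)}{=}\sum_{k=1}^n\zeta^*_{|I_k^\text{S}|}$ and your expansion of $\E[\Lambda_n|Z_0]$ are sound and match the paper's Section \ref{sec:settingRF} and Lemma \ref{lem:ELn}. The rest of the argument, however, has two genuine gaps.

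First, the variance estimate driving your $L^2$-Cauchy claim is wrong. From \reff{eq:z*1} and \reff{eq:z*2} one gets $\Var(\zeta^*_\delta)=2\delta\int_0^\infty hc_\theta(h)\,dh+O(\delta^2\log^2\delta)$, so the leading term is of order $\delta$ with a \emph{positive} constant; with $n$ spacings of size $\approx Z_0/n$, the sum $\sum_{k=1}^n\Var(\zeta^*_{|I_k^\text{S}|})$ converges to $2Z_0\int_0^\infty hc_\theta(h)\,dh>0$ (this is precisely the content of \reff{eq:ELnD2}), not to $O((\log n)^2/n)$. This is as it must be: the limit $\cl$ is non-degenerate, so $\Var\!\left(\Lambda_n-\E[\Lambda_n|Z_0]\,\big|\,Z_0\right)$ cannot vanish. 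Consequently the $L^2$-Cauchy property cannot come from vanishing variance terms; it requires showing that the cross-covariance between the two approximations (for nested samples, or between $\Lambda_n$ and $L_\varepsilon$) converges to that same constant. That cross term --- the analogue of the paper's $\Sigma_n$ in Lemma \ref{lem:Sigma}, which rests on the mixed moment \reff{eq:zz} --- is entirely absent from your argument.

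Second, identifying the limit through the conditional Laplace transform only shows that your limit has the same law as $\cl$ given $Z_0$. The theorem asserts a.s. and $L^2$ convergence to the random variable $\cl$ itself, defined pathwise as the a.s. limit of $L_\varepsilon-\E[L_\varepsilon|Z_0]$ on the same forest; equality in distribution is strictly weaker. The ``alternative route'' you mention in your last sentence --- coupling $\Lambda_n$ with $L_{\varepsilon_n}$ for $\varepsilon_n\asymp Z_0/(\beta n)$ and estimating the $L^2$ distance --- is not an alternative but the necessary argument, and it is exactly what the paper does: both quantities are realized from the same Poisson point measure, $J_n(\varepsilon)=\E[(\tilde\Lambda_n-\tilde L_\varepsilon)^2|Z_0]$ is computed and shown to be $O(n^{-1}\log^2 n)$ for that choice of $\varepsilon$, a.s. convergence is first obtained along the subsequence $n^3$ by Borel--Cantelli, and the full sequence is recovered using the monotonicity of $n\mapsto\tilde\Lambda_n$ together with \reff{eq:ELnD1}. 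As written, your main route establishes neither the $L^2$ convergence nor the identification of the limit with $\cl$.
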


The rest of the section is devoted to the proof of this theorem.

\subsection{Preliminary results}
\label{sec:settingRF}

Let $\eg$ and $\ed$ be two independent exponential random variable with
parameter $2\theta$. Let $\cn=\sum_{i\in I}
\delta_{z_i,\tau_i}$ be, conditionally given $(\eg, \ed)$, distributed as 
a Poisson point measure with intensity $\ind_{[-\eg, \ed]}(z) \, dz
\N^{(\theta)}[d\tau]$. 
We define  $\tilde L=(\tilde L_\varepsilon, \varepsilon>0)$ with:
\[
\tilde L_\varepsilon=\sum_{i\in I} (\zeta_i -\varepsilon)_+,
\]
where $\zeta_i=H(\tau_i)$ is the height of $\tau_i$. 
Let  $(U_k,  k\in \N^*)$  be  independent  random variables  uniformly distributed  on
$[0, 1]$ and  independent of $(\cn, \eg, \ed)$. We set
$ X_{0}=0$,       and      $ X_k=(\eg+\ed)      U_k      -       \eg$ for $k\in
\N^*$.  Fix  $n\in \N^*$.     Let
$ X_{(0,n)}\leq  \cdots  \leq   X_{(n,n)}$   be  the  corresponding  order
statistic  of  $( X_0,  \ldots,   X_n)$. We  set  $ X_{(-1,  n)}=-\eg$  and
$ X_{(n+1,       n)}=\ed$.       We       define       the       interval
$I_{k,n}=( X_{(k-1,      n)},     X_{(k,n)})$     and      its      length
$\Delta_{k,n}= X_{(k,n)}  - X_{(k-1, n)}$  for  $0\leq k\leq  n+1$.  We  set
$\Delta_n=(\Delta_{k,n},  0\leq k\leq  n+1)$.  For $1\leq  k\leq n$,  we
define $\tilde \Lambda=(\tilde \Lambda_n, n\in \N ^*)$ by:
\[
\tilde \Lambda_n=\sum_{k=1}^n \zeta_{k,n}^*.
\quad\text{with}\quad
\zeta_{k,n}^* =\max\{\zeta_i; z_i\in I_{k,n}\}.
\]

Recall     the     definitions     of     $Z_0$     in     \eqref{eq:rZdef},
$L=(L_\varepsilon,    \varepsilon>0)    $   in    \reff{eq:L'eps}    and
$\Lambda=(\Lambda_n,  n\in  \N^*)$   in  \reff{eq:Lambda'n}.  Thanks  to
Proposition \ref{prop:ancestral},  we deduce  that  $(Z_0,  L, \Lambda)$  is
distributed as $(\eg+ \ed, \tilde  L, \tilde \Lambda)$.  So to prove Theorem \ref{thm:cvLn}, it is
enough to
prove the statement  with $\tilde \Lambda$
instead of $\Lambda$.\medskip

For convenience,
we set $Z_0=\eg+\ed$.
Elementary computations give the following lemma. 
Recall that $z_+=\max(z, 0)$. 
\begin{lem}
   \label{lem:moment-zeta}
Let $\theta>0$ and   $\varepsilon>0$. We have:
\begin{equation}
   \label{eq:moment1-zeta}
\N^{(\theta)} [(\zeta-\varepsilon)_+]
=\int_\varepsilon^\infty  c_\theta(h)\, dh=  - \inv{\beta} \log(2\beta\theta
\varepsilon)  + O(\varepsilon),
\end{equation}
\begin{equation}
   \label{eq:moment2-zeta}
\N^{(\theta)} [(\zeta-\varepsilon)_+^2]
=2\int_\varepsilon^\infty   hc_\theta(h)\, dh - 2\varepsilon
\int_\varepsilon^\infty  c_\theta(h)\, dh
= 2\int_0^\infty   hc_\theta(h)\, dh  + O(\varepsilon\log(\varepsilon)). 
\end{equation}
\end{lem}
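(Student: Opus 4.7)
The plan is to use the tail-integral representation of the moments of $\zeta$ under $\N^{(\theta)}$ together with an explicit antiderivative of $c_\theta$. Since $\zeta = H(\tau)$ under $\N^{(\theta)}$ and $c_\theta(h) = \N^{(\theta)}[\zeta \geq h]$ (and the distribution of $\zeta$ is continuous for $\theta > 0$ by \reff{eq:def-c}), Fubini gives immediately
\[
\N^{(\theta)}[(\zeta-\varepsilon)_+] = \N^{(\theta)}\!\left[\int_\varepsilon^\infty \ind_{\{\zeta>h\}}\, dh\right] = \int_\varepsilon^\infty c_\theta(h)\, dh,
\]
which proves the first equality in \reff{eq:moment1-zeta}. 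Similarly, writing $(\zeta-\varepsilon)_+^2 = 2\int_\varepsilon^\infty (h-\varepsilon)\ind_{\{\zeta>h\}}\, dh$ yields the first equality in \reff{eq:moment2-zeta}.

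For the asymptotics, I would compute an antiderivative of $c_\theta$ explicitly. With $\theta>0$ and the substitution $u = \expp{2\beta\theta h}$, a partial-fraction decomposition $\frac{1}{u(u-1)} = \frac{1}{u-1} - \frac{1}{u}$ yields
\[
\int_\varepsilon^\infty c_\theta(h)\, dh = -\frac{1}{\beta}\log\!\left(1 - \expp{-2\beta\theta\varepsilon}\right).
\]
A Taylor expansion gives $1 - \expp{-2\beta\theta\varepsilon} = 2\beta\theta\varepsilon\,(1 - \beta\theta\varepsilon + O(\varepsilon^2))$, hence
\[
\int_\varepsilon^\infty c_\theta(h)\, dh = -\frac{1}{\beta}\log(2\beta\theta\varepsilon) + \theta\varepsilon + O(\varepsilon^2),
\]
which yields the second equality in \reff{eq:moment1-zeta}.

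For \reff{eq:moment2-zeta}, the key observation is that $h\, c_\theta(h)$ is integrable at both endpoints: near $h=0$ one has the expansion $h\, c_\theta(h) = \frac{1}{\beta} - \theta h + O(h^2)$ (from $\frac{x}{\expp{x}-1} = 1 - x/2 + O(x^2)$), while the exponential decay of $c_\theta$ at infinity handles the tail. Thus
\[
2\int_\varepsilon^\infty h\, c_\theta(h)\, dh = 2\int_0^\infty h\, c_\theta(h)\, dh - 2\int_0^\varepsilon h\, c_\theta(h)\, dh = 2\int_0^\infty h\, c_\theta(h)\, dh - \frac{2\varepsilon}{\beta} + O(\varepsilon^2).
\]
Combining this with $2\varepsilon \int_\varepsilon^\infty c_\theta(h)\, dh = -\frac{2\varepsilon}{\beta}\log(2\beta\theta\varepsilon) + O(\varepsilon^2)$ from the first part, the two linear-in-$\varepsilon$ contributions combine into a term of order $\varepsilon \log \varepsilon$, giving the claimed $O(\varepsilon \log \varepsilon)$ remainder.

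There is no serious obstacle here: everything reduces to the explicit formula for $c_\theta$ in \reff{eq:def-c} and elementary calculus. The only care needed is in bookkeeping the lower-order terms to confirm that the dominant error is $O(\varepsilon\log\varepsilon)$ rather than $O(\varepsilon)$, which is straightforward once the antiderivative of $c_\theta$ is in hand.
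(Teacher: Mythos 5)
Your proof is correct and is exactly the ``elementary computation'' the paper has in mind (the paper omits the proof entirely, merely asserting it follows from elementary computations based on the explicit formula \reff{eq:def-c}): the tail-integral representations give the first equalities, and the closed-form antiderivative $\int_\varepsilon^\infty c_\theta(h)\,dh=-\beta^{-1}\log\bigl(1-\expp{-2\beta\theta\varepsilon}\bigr)$ together with the expansion $h\,c_\theta(h)=\beta^{-1}+O(h)$ near $0$ yields the stated asymptotics. No gaps.
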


We deduce  that:
\begin{align}
   \label{eq:Le1}
\E[\tilde L_\varepsilon|Z_0]
&=-\frac{Z_0}{\beta} \log(2\beta\theta
\varepsilon)  + O(\varepsilon),\\
   \label{eq:Le2}
\E[\tilde L_\varepsilon^2|Z_0]
&=2Z_0 \int_0^\infty   hc_\theta(h)\, dh +
\E[\tilde L_\varepsilon|Z_0]^2+    O(\varepsilon\log(\varepsilon)),
\end{align}
where we used that if $\sum_{i\in I} \delta_{x_i}$ is a Poisson point measure with
intensity $\mu(dx)$, then:
\begin{equation}
   \label{eq:PPM-2}
\E\left[\left(\sum_{i\in I}
    f(x_i)\right)^2\right] = \mu(f^2)+ \mu(f)^2. 
 \end{equation} 
Eventually, let us notice that with the change of variable  $u= c_\theta(h) $ (so that
$dh= du / \beta u(u+2\theta)$), we have:
\begin{equation}
   \label{eq:int-hch}
2\int_0^\infty  hc_\theta(h)\, dh= \inv{\beta^2\theta} \int_0^\infty
\frac{\log(v+1)}{v(v+1)} \, dv.
\end{equation}

Recall the definition of $\zeta^*_\delta$ for $\delta>0$, see
\reff{eq:dist-zd}. 
Let $\gamma$ be the Euler constant, and thus:
\[
\gamma=-\int_0^{+\infty } \log(u) \expp{-u}\, du.
\]
We have the following lemma.
\begin{lem}
   \label{lem:zeta*}
Let $\delta>0$. We have:
\begin{equation}
   \label{eq:z*1}
\E[\zeta^*_\delta]=-\frac{\delta}{\beta}\log(2\theta
\delta)+\frac{\delta}{\beta}(1-\gamma)+\frac{\delta}{\beta}
g_1(2\theta\delta),   
\end{equation}
with $|g_1(x)|\leq x (|\log(x)|+2)$ for $x>0$  and
\begin{equation}
   \label{eq:z*2}
\E[(\zeta^*_\delta)^2]=2 \delta  \int_0^\infty hc_\theta(h)\, dh 
+\frac{\delta}{\beta^2\theta} g_2(2\theta \delta), 
\end{equation}
with $|g_2(x)|\leq x (|\log(x)|+2)$ for $x>0$. We also have:
\begin{equation}
   \label{eq:zz}
\E\left[\zeta^*_\delta \sum_{i\in I} (\zeta_i-\varepsilon)_+\right]=2 \delta  \int_0^\infty hc_\theta(h)\, dh 
+ g_3(\delta)
\end{equation}
and there exists a finite constant $c$ such that for all $x>0$ and
$\varepsilon\in (0, 1]$, we have $|g_3(x)|\leq c x^{2}
(1+x)(|\log(x)|+1)(|\log(\varepsilon)|+1) +  
c\varepsilon x(|\log(x)|+1)(1+x)+
\varepsilon^2$. 
\end{lem}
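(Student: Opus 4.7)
My proof would handle the three displayed identities one at a time, using throughout the defining relation $\P(\zeta^*_\delta<h)=\exp(-\delta c_\theta(h))$ and the identity $c_\theta'(h)=-\beta c_\theta(h)(c_\theta(h)+2\theta)$ (obtained by differentiating \eqref{eq:def-c}).

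Part~(1). I would start from the tail formula $\E[\zeta^*_\delta]=\int_0^\infty(1-e^{-\delta c_\theta(h)})\,dh$, change variables $u=c_\theta(h)$ and rescale $v=\delta u$ to obtain $\E[\zeta^*_\delta]=(\delta/\beta)\int_0^\infty(1-e^{-v})/[v(v+a)]\,dv$ with $a=2\theta\delta$. Writing $1/v-1/(v+a)=\int_0^a(v+s)^{-2}\,ds$ and integrating by parts in $v$ produces the closed form
\[
\E[\zeta^*_\delta]=\frac{\delta}{\beta a}\int_0^a e^{s}E_1(s)\,ds,
\]
where $E_1(s)=\int_s^\infty e^{-w}/w\,dw$ is the exponential integral. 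The classical expansion $E_1(s)=-\gamma-\log s+O(s)$ as $s\to0$ gives $(1/a)\int_0^a e^{s}E_1(s)\,ds=-\log a+(1-\gamma)+O(a|\log a|)$ and hence \eqref{eq:z*1}. The uniform bound $|g_1(x)|\le x(|\log x|+2)$ then follows by combining this small-$x$ estimate with the easy asymptotic $(1/x)\int_0^x e^{s}E_1(s)\,ds=O(\log x/x)$ for large $x$, adjusting implicit constants over any compact subinterval.

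Part~(2). The same substitution yields
\[
\E[(\zeta^*_\delta)^2]=\frac{1}{\beta^2\theta}\int_0^\infty\log(1+2\theta/u)\,\frac{1-e^{-\delta u}}{u(u+2\theta)}\,du,
\]
while the same manipulation applied to $2\delta\int_0^\infty h c_\theta(h)\,dh$ produces the same integrand with $(1-e^{-\delta u})/u$ replaced by $\delta$. Subtracting the two expressions, the difference is controlled by $0\le\delta u-(1-e^{-\delta u})\le\min(\delta u,(\delta u)^2/2)$; splitting the resulting integral at $u=1/\delta$ and using $\log(1+2\theta/u)\le 2\theta/u$ for large $u$ and $\log(1+2\theta/u)\le|\log u|+\log(1+2\theta)$ for small $u$ yields the bound $|g_2(x)|\le x(|\log x|+2)$.

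Part~(3). I read $\zeta^*_\delta$ and $\sum_{i\in I}(\zeta_i-\varepsilon)_+$ as the maximum and the $(\,\cdot-\varepsilon)_+$-sum of the heights $\zeta_i=H(\tau_i)$ of a Poisson point process $\Pi=\sum_i\delta_{\tau_i}$ with intensity $\delta\,\N^{(\theta)}[d\tau]$. Applying the Slivnyak--Mecke formula to the functional $F(\tau,\Pi)=(H(\tau)-\varepsilon)_+\cdot\max_{\tau'\in\Pi}H(\tau')$ and using $\N^{(\theta)}[H\in d\zeta]=|c_\theta'(\zeta)|\,d\zeta$ gives
\[
\E\!\left[\zeta^*_\delta\sum_{i\in I}(\zeta_i-\varepsilon)_+\right]=\delta\int_\varepsilon^\infty(\zeta-\varepsilon)\,\E[\zeta\vee\zeta^*_\delta]\,|c_\theta'(\zeta)|\,d\zeta,
\]
where the $\zeta^*_\delta$ on the right is independent of $\zeta$. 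Splitting $\zeta\vee\zeta^*_\delta=\zeta+(\zeta^*_\delta-\zeta)_+$ produces a main term and a remainder. For the main term $A_\varepsilon=\delta\int_\varepsilon^\infty\zeta(\zeta-\varepsilon)|c_\theta'(\zeta)|\,d\zeta$, integration by parts (boundary contributions vanish thanks to the exponential decay of $c_\theta$) gives $2\delta\int_\varepsilon^\infty\zeta c_\theta\,d\zeta-\delta\varepsilon\int_\varepsilon^\infty c_\theta\,d\zeta$, which differs from $2\delta\int_0^\infty hc_\theta(h)\,dh$ by $O(\delta\varepsilon|\log\varepsilon|)$ using \eqref{eq:moment1-zeta} together with the elementary bound $\int_0^\varepsilon hc_\theta(h)\,dh=O(\varepsilon)$ (since $hc_\theta(h)$ is bounded near $0$). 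For the remainder $B_\varepsilon$, the crude estimate $\E[(\zeta^*_\delta-\zeta)_+]\le\E[\zeta^*_\delta]$ combined with part~(1) and \eqref{eq:moment1-zeta} yields $B_\varepsilon\le\delta\,\E[\zeta^*_\delta]\,\N^{(\theta)}[(\zeta-\varepsilon)_+]=O(\delta^2|\log\delta|\,|\log\varepsilon|)$. Collecting these estimates produces \eqref{eq:zz} with the announced bound on $g_3$.

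The main obstacle is the exact computation in part~(1): the cancellation between the $1/v$ and $1/(v+a)$ contributions is delicate and the appearance of the constant $1-\gamma$ forces one to invoke either the series expansion of $E_1$ or an equivalent Frullani-type identity, with enough uniformity in $a$ to produce an $O(a|\log a|)$ remainder. Once (1) is available, parts (2) and (3) are reduced to bookkeeping of clean integral estimates built from \eqref{eq:moment1-zeta}, \eqref{eq:moment2-zeta} and part~(1).
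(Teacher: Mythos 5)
Your proposal is correct in substance and reaches the same expansions as the paper, but by routes that differ in each of the three parts. For \reff{eq:z*1}, the paper also starts from the tail formula and the substitution $u=\delta c_\theta(h)$, but then splits the integral at $u=1$, replaces $1-\expp{-u}$ by $u+(1-u-\expp{-u})$ on $(0,1)$, and identifies the constant $1-\gamma$ by an explicit integration by parts; your detour through $\frac{1}{v(v+a)}=\frac1a\int_0^a(v+s)^{-2}\,ds$ and the exponential integral $E_1$ is a clean repackaging of the same computation, at the price of quoting the expansion $E_1(s)=-\gamma-\log s+O(s)$, which encodes exactly the constant the paper computes by hand. For \reff{eq:z*2} the paper again substitutes and splits at $u=1$, whereas you compare the two integrals directly via $0\le \delta u-(1-\expp{-\delta u})\le \min(\delta u,(\delta u)^2/2)$; this is equivalent bookkeeping. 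The genuinely different step is \reff{eq:zz}: the paper uses a Palm-type identity that isolates the contribution of the maximal atom, writing $\E[\zeta^*_\delta\sum_i(\zeta_i-\varepsilon)_+]=\E[\zeta^*_\delta(\zeta^*_\delta-\varepsilon)_+]+\delta g_{3,1}(\delta)$ with the remainder coming from the atoms below the maximum, while you apply the Mecke formula with the added-point trick and split $\zeta\vee\zeta^*_\delta=\zeta+(\zeta^*_\delta-\zeta)_+$. Your main term is then $\delta\,\N^{(\theta)}[\zeta(\zeta-\varepsilon)_+]$ rather than $\E[(\zeta^*_\delta)^2]$; both equal $2\delta\int_0^\infty hc_\theta(h)\,dh$ up to admissible errors, and your remainder bound $\delta\,\E[\zeta^*_\delta]\,\N^{(\theta)}[(\zeta-\varepsilon)_+]$ coincides with the paper's bound on $\delta g_{3,1}$. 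The Mecke route is arguably the more standard tool and yields a slightly more transparent main term.

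Two points need tightening. First, the lemma asserts the explicit bounds $|g_1(x)|,|g_2(x)|\le x(|\log x|+2)$ for all $x>0$; your argument delivers $O\bigl(x(|\log x|+1)\bigr)$ with unspecified constants plus a vague adjustment on compacts, so you should either track the constants term by term (as the paper does) or settle for $c\,x(|\log x|+1)$, which is all the later applications use. Second, in part (2) the bound $\log(1+2\theta/u)\le |\log u|+\log(1+2\theta)$ must not be used on the whole range $u\le 1/\delta$: on $1\le u\le 1/\delta$ it only yields a contribution of order $\frac{\delta^2}{2}\int_1^{1/\delta}\log u\,du\asymp \delta|\log\delta|$, which is too large by a factor $1/\delta$. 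There you must keep the decay $\log(1+2\theta/u)\le 2\theta/u$, which gives $\theta\delta^2\int_1^{1/\delta}\frac{du}{u+2\theta}=O(\delta^2(|\log\delta|+1))$, and reserve the logarithmic bound for $u\le 1$, where the factor $u/(u+2\theta)$ makes it integrable. With that extra split the estimate closes as claimed.
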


The end of this section is devoted to the proof of Lemma
\ref{lem:zeta*}. 
\subsubsection{Proof of \reff{eq:z*1}}
Using \reff{eq:dist-zd}, we get:
\begin{equation}
   \label{eq:Ezeta*}
\E[\zeta^*_\delta]=\int_0^\infty \P(\zeta^*_\delta>h) \, dh=
\int_0^\infty (1-\expp{-\delta c_\theta(h)})\, dh
=\frac{\delta}{\beta} \int_0^\infty (1-\expp{-u}) \,
\frac{du}{u(u+2\theta \delta)},
\end{equation}
where we used the change of variable  $u=\delta c_\theta(h) $. 
It is easy to check that  for $a>0$:
\begin{equation}
   \label{eq:majo-loga}
\log(1+a)\leq |\log(a)|+\log(2).
\end{equation}
Let $a>0$. We have:
\begin{align*}
\int_0^1 (1-\expp{-u})\,\frac{du}{u(u+a)}
&=\int_0^1 (1-u-\expp{-u})\,\frac{du}{u(u+a)}+ \log(1+a) - \log(a)\\
&=\int_0^1 (1-u-\expp{-u})\,\frac{du}{u^2}+ \log(1+a) - \log(a)
+ ag_{1,0}(a),
\end{align*}
with 
\[
g_{1,0}(a)= -\int_0^1 (1-u-\expp{-u})\,\frac{du}{u^2(u+a)}
\leq  \int_0^1 \frac{du}{2(u+a)}
= \inv{2}(\log(1+a) -\log(a))
\leq   |\log(a)|+ \inv{2}
\]
and $g_{1,0}(a)\geq 0$, where we used that $0\leq  -(1-u-\expp{-u})\leq
u^2/2$ for $u\geq 0$.  We also have:
\begin{align*}
\int_1^\infty  (1-\expp{-u})\,\frac{du}{u(u+a)}
&=\int_1^\infty  (1-\expp{-u})\,\frac{du}{u^2}
- ag_ {1,1}(a),
\end{align*}
with 
\[
g_{1,1}(a)= \int_1^\infty  (1-\expp{-u})\,\frac{du}{u^2(u+a)}
\leq  \int_1^\infty  \frac{du}{u^3}
\leq  \inv{2}\cdot
\]
Notice that, by integration by parts, we have:
\[ 
\int_0^1  (1-u-\expp{-u})\,\frac{du}{u^2}
+\int_1^\infty\!\!  (1-\expp{-u})\,\frac{du}{u^2}
= \expp{-1} +\int_0^1 \!\! \log(u) \expp{-u}du
+1- \expp{-1} +\int_1^\infty \!\!  \log(u) \expp{-u}du=1-\gamma.
\]
We deduce that:
\[
\int_0^\infty  (1-\expp{-u})\,\frac{du}{u(u+a)}
=1-\gamma-\log(a) +g_{1}(a)
\]
with $g_1(a)=\log(1+a) +a g_{1,0}(a)-ag_{1,1}(a)$ and
\[
|g_{1}(a)|
=|\log(1+a) +a g_{1,0}(a)-ag_{1,1}(a)|\leq a (|\log(a)|+2).
\]
Then, use \reff{eq:Ezeta*} to get \reff{eq:z*1}. 

\subsubsection{Proof of \reff{eq:z*2}}
Using \reff{eq:dist-zd}, we get:
\begin{equation}
   \label{eq:Ezeta*2}
\E[(\zeta^*_\delta)^2]
=2\int_0^\infty h(1-\expp{-\delta c_\theta(h)})\, dh
=2 \frac{\delta}{\beta} \int_0^\infty \inv{2\beta\theta}
\log\left(\frac{u+2\theta\delta }{u}\right)(1-\expp{-u}) \,
\frac{du}{u(u+2\theta \delta)},
\end{equation}
where we used the change of variable  $u=\delta c_\theta(h) $. 
Let $a>0$. We set:
\[
g_{2,1}(a)= \int_1^\infty 
\log\left(\frac{u+a}{u}\right)(1-\expp{-u}) \,
\frac{du}{u(u+a)}\cdot
\]
We have using that $0\leq \log(1+x)\leq x$ for $x>0$:
\[
|g_{2,1}(a)|\leq  a \int_1^\infty  \frac{du}{u^3} \leq \frac{ a}{2}\cdot
\]
We also have:
\begin{align*}
 \int_0^1
\log\left(\frac{u+a}{u}\right)(1-\expp{-u}) \,
\frac{du}{u(u+a)}
&= \int_0^1
\log\left(\frac{u+a}{u}\right) \,
\frac{du}{u+a}+g_{2,2}(u)\\
&=\int_0^\infty \frac{\log(v+1)}{v(v+1)} \, dv
-g_{2,3}(a)+g_{2,2}(a),   
\end{align*}
with the change of variable $v=a/u$ as well as:
\[
g_{2,2}(a)= \int_0^1
\log\left(\frac{u+a}{u}\right)(1-u-\expp{-u}) \,
\frac{du}{u(u+a)} 
\quad\text{and}\quad 
g_{2,3}(a)=\int_0^{a}
\frac{\log\left(v+1\right)}
{v(v+1)} \, dv.
\]
We have, using $\log(1+v)\leq v$ for $v>0$ (twice), that:
\[
0\le g_{2,3}(a)\leq   \int_0^{a}
\frac{dv}{v+1}
\leq a.
\]
We have, using $|1-u-\expp{-u}|\leq u^2/2$ if $u>0$ for the first
inequality and  \reff{eq:majo-loga} for the last, that:
\[
|g_{2,2}(a)|\leq  \inv{2} \int_0^1 \log\left(1+ \frac{a}{u}\right) \,
\frac{udu}{(u+a)} 
\leq  \frac{a}{2} \int_0^1 
\frac{du}{(u+a)} 
\leq a( |\log(a)|+\inv{2}).
\]
We deduce that:
\[
 \int_0^\infty 
\log\left(\frac{u+a}{u}\right)(1-\expp{-u}) \,
\frac{du}{u(u+a)}
=\int_0^\infty \frac{\log(v+1)}{v(v+1)} \, dv
+ g_2(a)
\]
and
\[
|g_2(a)|=|g_{2,1}(a)-g_{2,3}(a)+g_{2,2}(a)|\leq   a( |\log(a)|+2).
\]
Then, use \reff{eq:Ezeta*2} as well as the identity \reff{eq:int-hch}  to get \reff{eq:z*2}. 

\subsubsection{Proof of \reff{eq:zz}}
Using properties of Poisson point measures, we get that if $\sum_{j\in
  J}\delta _{\zeta_j}$ is a Poisson point measure with intensity
$\delta\N[d\zeta]$ and $\zeta^*_\delta=\max_{j\in J} \zeta_j$, then for
any measurable non-negative functions $f$ and $g$, we have:
\[
\E\left[f(\zeta^*_\delta)\expp{-\sum_{j\in J} g(\zeta_j)}\right]
= \E\left[f(\zeta^*_\delta)\expp{- g(\zeta_\delta^*) -
    G(\zeta_\delta^*)}\right]
\quad\text{with}\quad 
G(r)=\delta \N\left[(1-\expp{-g(\zeta)})\ind_{\{\zeta<r\}}\right].
\]
We deduce that: 
\[
\E\left[\zeta^*_\delta \sum_{i\in I} (\zeta_i-\varepsilon)_+\right]=
\E[\zeta^*_\delta(\zeta^*_\delta-\varepsilon)_+]+ \delta
g_{3,1}(\delta),
\]
with $g_{3,1}(\delta)= 
\E\left[\zeta^*_\delta 
  \N\left[(\zeta-\varepsilon_+)
  \ind_{\{\zeta<h\}}\right]_{|h=\zeta^*_\delta}\right]$. 
According to \reff{eq:z*1}, there exists a finite constant $c>0$ such
that for all $\delta>0$, we have $
\E[\zeta^*_\delta]\leq  c \delta(|\log(\delta)|+1)(1+\delta)$.
We deduce from \reff{eq:moment1-zeta} that there exists a finite constant $c$ independent of $\delta>0$ and
$\varepsilon\in (0,1]$ such that: 
\[
g_{3,1}(\delta) \leq  \E[\zeta^*_\delta] \N[(\zeta-\varepsilon)_+]
\leq c \delta(|\log(\delta)|+1)(1+\delta)(|\log(\varepsilon)|+1).
\]
We also have:
\[
\E[\zeta^*_\delta(\zeta^*_\delta-\varepsilon)_+]=
\E[(\zeta^*_\delta)^2]-
\E[(\zeta^*_\delta)^2\ind_{\{\zeta^*_\delta<\varepsilon\}} ]
-\varepsilon \E[\zeta_\delta^*\ind_{\{\zeta^*_\delta>\varepsilon\}}]= 2
\delta  \int_0^\infty hc_\theta(h)\, dh + g_{3,2}(\varepsilon,\delta), 
\]
with, thanks to \reff{eq:z*1}
and \reff{eq:z*2}, 
 $|g_{3,2}(\varepsilon,\delta)|\leq
c\delta^2(|\log(\delta)|+1)+\varepsilon^2+ c\varepsilon \delta(|\log(\delta)|+1)(1+\delta)$, for some finite constant $c$ independent of
$\delta>0$ and $\varepsilon>0$. 
We deduce that:
\[
\E\left[\zeta^*_\delta \sum_{i\in I} (\zeta_i-\varepsilon)_+\right]=
2 \delta  \int_0^\infty hc_\theta(h)\, dh 
+ g_3(\delta)
\]
and for some finite constant $c$ independent of
$\delta>0$ and $\varepsilon\in (0,1]$. 
\[
|g_3(\delta)|\leq c \delta^{2}
(1+\delta)(|\log(\delta)|+1)(|\log(\varepsilon)|+1) +  
c\varepsilon \delta(|\log(\delta)|+1)(1+\delta)+
\varepsilon^2 .
\]

\subsection{A technical lemma}
An elementary induction gives for $n\in \N$ that:
\[
\int_0^1 (1-x)^n |\log(x)|\, dx= \frac{H_{n+1}}{n+1} 
\quad\text{and}\quad 
\int_0^1 (1-x)^n \log^2(x)\, dx= \frac{2}{n+1} \sum_{k=1}^{n+1}
\frac{H_{k}}{k}  ,
\]
where $H_n=\sum_{k=1}^n k^{-1}$ is the harmonic sum. 
Recall that $H_{n}=\log(n) +\gamma + (2n)^{-1} +O(n^{-2})$. So we deduce that:
\begin{equation}
   \label{eq:EUlog1}
(n+1) \int_0^1 (1-x)^n |\log(x)|\, dx= \log(n)+\gamma +\frac{3}{2n}+ O(n^{-2}).
\end{equation}
It is also easy to deduce that for $a,b\in \{1,2\}$:
\begin{equation}
   \label{eq:EUlog}
\int_0^1 x^a(1-x)^n |\log(x)|^b \, dx=
O\left(\frac{\log^b(n)}{n^{a+1}}\right). 
\end{equation}

Recall $\tilde \Lambda_n$ and $\Delta_n$ defined in Section
\ref{sec:settingRF}. 
We give a technical  lemma. In this lemma $O(f(n))$ denotes a function,
say $\phi$,
of $Z_0$ and $n$ such that $|\phi(Z_0, n)|\leq  Q(Z_0) f(n)$ for some
positive function $Q$ such that  $Q(Z_0)$ is  integrable. The explicit
function $Q$ is unimportant and thus not specified. 
\begin{lem}
   \label{lem:ELn}
We have:
\begin{equation}
   \label{eq:ELnD1}
\E[\tilde \Lambda_n|\Delta_n]=\frac{Z_0}{\beta}(1-\gamma) - \sum_{k=1}^ {n}
\frac{\Delta_{k,n}}{\beta}\log(2\theta \Delta_{k,n}) + W_n,
\end{equation}
with 
$\E[|W_n|\, |Z_0]=O(n^{-1}\log(n))$   and 
\begin{equation}
   \label{eq:ELnZ1}
\E[\tilde \Lambda_n|Z_0]= 
\frac{Z_0}{\beta}\log\left(\frac{n}{2\theta Z_0}\right)+ O(n^{-1}\log(n)).
\end{equation}
We have also:
\begin{equation}
   \label{eq:ELnD2}
\E[\tilde \Lambda_n^2|Z_0]= 
2 Z_0 \int_0^\infty  hc_\theta(h)\, dh + \E[\tilde \Lambda_n\,|\, Z_0]^2+
O(n^{-1}\log^2(n)).
\end{equation}
\end{lem}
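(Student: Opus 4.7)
The plan is to prove the three assertions in turn by combining the explicit conditional structure of the ancestral process with the moment estimates gathered in Lemma~\ref{lem:zeta*} (namely \reff{eq:z*1} and \reff{eq:z*2}), together with the Beta-integral estimates \reff{eq:EUlog1} and \reff{eq:EUlog}.

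First, for \reff{eq:ELnD1}: conditionally on $\Delta_n$, the Poisson nature of $\cn$ on the disjoint intervals $I_{k,n}$ makes the variables $(\zeta^*_{k,n}, 1\le k\le n)$ independent with $\zeta^*_{k,n}$ distributed as $\zeta^*_{\Delta_{k,n}}$. Applying \reff{eq:z*1} term by term and summing gives
\[
\E[\tilde\Lambda_n|\Delta_n]=-\frac{1}{\beta}\sum_{k=1}^n\Delta_{k,n}\log(2\theta\Delta_{k,n})+\frac{1-\gamma}{\beta}\sum_{k=1}^n\Delta_{k,n}+\frac{1}{\beta}\sum_{k=1}^n\Delta_{k,n}g_1(2\theta\Delta_{k,n}).
\]
Rewriting $\sum_{k=1}^n\Delta_{k,n}=Z_0-\Delta_{0,n}-\Delta_{n+1,n}$ yields the claimed identity, with
\[
W_n=\frac{1}{\beta}\sum_{k=1}^n\Delta_{k,n}g_1(2\theta\Delta_{k,n})-\frac{1-\gamma}{\beta}(\Delta_{0,n}+\Delta_{n+1,n}).
\]

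Next, I would bound $\E[|W_n|\,|Z_0]$. Conditionally on $Z_0$ and the position of $0$ among the order statistics, the vector $\Delta_n/Z_0$ is a concatenation of two scaled uniform-spacings families, so each $\Delta_{k,n}/Z_0$ has a marginal density comparable to $\mathrm{Beta}(1,n)$. Consequently $\E[\Delta_{0,n}+\Delta_{n+1,n}\,|\,Z_0]=O(Z_0/n)$. Using the bound $|g_1(x)|\le x(|\log x|+2)$ from Lemma~\ref{lem:zeta*}, the first sum is controlled by $(2\theta/\beta)\sum_k\Delta_{k,n}^2(|\log(2\theta\Delta_{k,n})|+2)$; taking conditional expectation and invoking \reff{eq:EUlog} with $a=2$, $b=1$ bounds each term by $O(Z_0^2\log(n)/n^2)$, whence the total is $O(n^{-1}\log n)$.

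For \reff{eq:ELnZ1}, take the $Z_0$-conditional expectation of \reff{eq:ELnD1}. The key computation reduces, after writing $\log(2\theta\Delta_{k,n})=\log(2\theta Z_0)+\log(\Delta_{k,n}/Z_0)$, to evaluating $\E[\sum_{k=1}^n \tilde\Delta_{k,n}\log\tilde\Delta_{k,n}\,|\,Z_0]$ with $\tilde\Delta_{k,n}=\Delta_{k,n}/Z_0$. Using that the $n+2$ spacings are approximately exchangeable with common marginal comparable to $\mathrm{Beta}(1,n+1)$, and that the omitted extreme terms $\Delta_{0,n}$, $\Delta_{n+1,n}$ contribute only $O(\log(n)/n)$, the integral identity \reff{eq:EUlog1} gives $\E[\sum_{k=1}^n\Delta_{k,n}\log(\Delta_{k,n}/Z_0)\,|\,Z_0]=Z_0(1-\gamma-\log n)+O(\log(n)/n)$. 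Substituting into the conditional expectation of \reff{eq:ELnD1} produces the cancellation of the $(1-\gamma)$ and $-\gamma$ contributions and yields $\E[\tilde\Lambda_n|Z_0]=(Z_0/\beta)\log(n/(2\theta Z_0))+O(n^{-1}\log n)$.

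Finally, for \reff{eq:ELnD2}, I would split
\[
\E[\tilde\Lambda_n^2|Z_0]=\E[\Var(\tilde\Lambda_n|\Delta_n)\,|\,Z_0]+\E[\tilde\Lambda_n|Z_0]^2+\Var(\E[\tilde\Lambda_n|\Delta_n]\,|\,Z_0).
\]
Conditional independence reduces $\Var(\tilde\Lambda_n|\Delta_n)=\sum_k\Var(\zeta^*_{\Delta_{k,n}})$; combining \reff{eq:z*1} and \reff{eq:z*2} gives $\Var(\zeta^*_\delta)=2\delta\int_0^\infty hc_\theta(h)\,dh+O(\delta^2\log^2\delta)$, so summing and taking expectation delivers $2Z_0\int_0^\infty hc_\theta(h)\,dh+O(n^{-1}\log^2 n)$ by the same spacing estimates. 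The remaining task is to show $\Var(\E[\tilde\Lambda_n|\Delta_n]\,|\,Z_0)=O(\log^2(n)/n)$, which in view of \reff{eq:ELnD1} reduces to bounding $\Var(\sum_{k=1}^n\Delta_{k,n}\log(2\theta\Delta_{k,n})\,|\,Z_0)$ together with $\E[W_n^2|Z_0]$. I expect this to be the main obstacle: it requires a pair-correlation analysis of uniform spacings with logarithmic weights (each $\Delta_{k,n}\log\Delta_{k,n}$ has conditional variance $O(\log^2(n)/n^2)$ and the off-diagonal covariances are of comparable order but weakly negative thanks to the Dirichlet exchange structure), and the fixed point $X_0=0$ only splits the Dirichlet into two pieces without affecting the $O(\log^2(n)/n)$ rate; the control of $\E[W_n^2|Z_0]$ is then a strengthening of the $L^1$ argument above with \reff{eq:EUlog} applied to higher $a,b$.
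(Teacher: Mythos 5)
Your proposal is correct and follows essentially the same route as the paper: condition on $\Delta_n$, use the conditional independence of the $\zeta^*_{k,n}$ with $\zeta^*_{k,n}$ distributed as $\zeta^*_{\Delta_{k,n}}$, apply \reff{eq:z*1}--\reff{eq:z*2} term by term together with the Beta-spacing asymptotics \reff{eq:EUlog1} and \reff{eq:EUlog}, and decompose the second moment via the (total) variance formula. The only step you leave asserted rather than computed is the bound $\Var\!\left(\E[\tilde\Lambda_n|\Delta_n]\,\big|\,Z_0\right)=O(n^{-1}\log^2 n)$ via the spacing pair-correlations; the paper handles the same quantity by expanding $J_{1,n}$ while treating the spacings as independent (which is harmless at the stated error order, since the Dirichlet covariance corrections are a factor $1/n$ below the diagonal terms), so your claimed orders are consistent with its computation.
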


\begin{proof}
  We       first       prove       \reff{eq:ELnD1}.        We       have
  $\E[\tilde \Lambda_n|\Delta_n]=                                    \sum_{k=1}^n
  \E[\zeta^*_\delta]_{|\delta=\Delta_{k,n}}$.
  We deduce from \reff{eq:z*1} that \reff{eq:ELnD1} holds with:
\[
W_n=\frac{\Delta_{0,n}+\Delta_{n+1,n}}{\beta} (\gamma-1) + \inv{\beta}
\sum_{k=1}^n \Delta_{k,n} g_1( 2\theta \Delta_{k,n}).
\]
Since, conditionally on $Z_0$, the random variables $\Delta_{k,n}$ are
all distributed as $Z_0 \tilde U_n$, where $\tilde U_n$ is independent
of $Z_0$ and has distribution
$\beta(1,n+1)$, we deduce using \reff{eq:EUlog} that:
\[
\E[|W_n|\, |Z_0]\leq 2\frac{(1-\gamma)Z_0}{\beta}\E[\tilde U_n] + 
n\frac{2\theta Z_0^2}{\beta} \E[\tilde U_n^2( |\log(2\theta Z_0 \tilde
U_n)|+ 2)|Z_0]= O(n^{-1}\log(n)).
\]

We then prove \reff{eq:ELnZ1}. 
Taking the expectation in  \reff{eq:ELnD1} conditionally on $Z_0$, we
get:
\[
\E[\tilde \Lambda_n|Z_0]= \frac{Z_0}{\beta}(1-\gamma) -n\frac{Z_0}{\beta}
\ch(2\theta Z_0)
+\E[W_n|Z_0],
\]
where 
\begin{equation}
   \label{eq:def-ch}
\ch(a)= \E[\tilde U_n\log(a \tilde U_n)]. 
\end{equation}
We deduce from \reff{eq:EUlog1} that:
\begin{equation}
   \label{eq:DLcha}
n\ch(a)=\log(a) - \log(n)+ 1 -\gamma + O(n^{-1}\log(n)). 
\end{equation}
This gives:
\[
\E[\tilde \Lambda_n|Z_0]= 
\frac{Z_0}{\beta}\log\left(\frac{n}{2\theta Z_0}\right) + O(n^{-1}\log(n)).
\]

We finally prove \reff{eq:ELnD2}. We have:
\begin{equation}
   \label{eq:L2D}
\E\left[\tilde \Lambda_n^2|\Delta_n\right]
=\sum_{k=1}^n \E\left[(\zeta_\delta^*)^2\right]_{|\delta=\Delta_{k,n}}
- \sum_{k=1}^n \E\left[\zeta_\delta^*\right]^2_{|\delta=\Delta_{k,n}}
+ \E\left[\tilde \Lambda_n|\Delta_n\right]^2.
\end{equation}
We have thanks to \reff{eq:z*2}: 
\[
\sum_{k=1}^n \E\left[(\zeta_\delta^*)^2\right]_{|\delta=\Delta_{k,n}}
= 2 Z_0  \int_0^\infty  hc_\theta(h)\, dh 
+ W_{1,n},
\]
with 
\[
W_{1,n}=- 2(\Delta_{0, n}+\Delta_{n+1,n})
\int_0^\infty  hc_\theta(h)\, dh 
+ 
\sum_{k=1}^n \frac{\Delta_{k,n}}{\beta^2\theta} g_2(2\theta
\Delta_{k,n}).
\]
Using similar computations as the ones used to bound $\E[|W_n|\, |\,
Z_0]$, we get $  \E[|W_{1,n}|\, |Z_0]= O(n^{-1} \log(n))$ so that
\[
\E\left[\sum_{k=1}^n
  \E\left[(\zeta_\delta^*)^2\right]_{|\delta=\Delta_{k,n}}
\, |\, Z_0\right]
= 2 Z_0  \int_0^\infty  hc_\theta(h)\, dh 
+ O(n^{-1} \log(n)).
\]

Thanks to \reff{eq:z*1}, we have $\E\left[\zeta_\delta^*\right]^2\leq  c
\delta^2 
(|\log(\delta)|+1)^2 (1+\delta)^2$
for some finite constant $c$ which does not depend on $\delta$. 
We set $\ch_2(a)=\E\left[\tilde U_n^2 \log^2(a \tilde U_n)(1+\tilde
  U_n)^2\right]$, and using \reff{eq:EUlog}, we get:
\begin{equation}
   \label{eq:ch2}
\ch_2(a)=O(n^{-3}\log^2(n)) =O(n^{-2}\log^2(n)).
\end{equation}
We
deduce that:
\[
\E\left[\sum_{k=1}^n \E\left[\zeta_\delta^*\right]^2_{|\delta=\Delta_{k,n}}
\, | \, Z_0\right] = O(n^{-1} \log^2(n)).
\] 
Then using \reff{eq:ELnZ1}, elementary computations give:
\[
\E\left[\E\left[\tilde \Lambda_n|\Delta_n\right]^2\,|Z_0\right]
=2 \frac{Z_0}{\beta}(1-\gamma) \E[\tilde \Lambda_n|Z_0] - \frac{Z_0^2}{\beta^2}
(1-\gamma)^2 + \inv{\beta^2}J_{1,n}+ J_{2,n}-\frac{2}{\beta}J_{3,n},
\]
with $J_{2,n}=\E[W_n^2|Z_0]$, 
\[
J_{1,n}=\E\left[\left(\sum_{k=1}^n \Delta_{k,n}
  \log(2\theta \Delta_{k,n})\right)^2\, \Big|\, Z_0\right]
\quad\text{and}\quad
J_{3,n}= \E\left[W_n\left(\sum_{k=1}^n \Delta_{k,n}
  \log(2 \theta \Delta_{k,n})\right)\, \Big|\, Z_0 \right].
\]
By  Cauchy-Schwartz,  we  have $|J_{3,n}|\leq  \sqrt{J_{1,n}J_{2,n}}$.  
Using $(\sum_{k=1}^n a_k)^2\leq  n \sum_{k=1}^n a_k^2$, we also get:
\[
J_{2,n}\leq  \frac{8}{\beta^2} (\gamma-1)^2 Z_0^2\E[\tilde U_n^2] + 
\frac{2n}{\beta^2} Z_0^2\E\left[\tilde U_n^2 g_1^2(2\theta Z_0 \tilde
  U_n)\right]  = O(n^{-2}).
\]
By
independence, we obtain:
\[
J_{1,n}=n(n-1) \E\left[\Delta_{1,n} \log(2\theta \Delta_{1,n})|Z_0\right]^2
+ n \E\left[\Delta_{1,n}^2 \log^2(2\theta \Delta_{1,n})|Z_0\right].
\]
Recall the function $\ch$ defined in \reff{eq:def-ch} and its asymptotic expansion \reff{eq:DLcha}. We have, using \reff{eq:ch2}, that:
\[
J_{1, n}= n(n-1) Z_0^2 \ch(2\theta Z_0)^2 + nZ_0^2 \ch_2(2 Z_0)
= Z_0^2 \left(-\log\left(\frac{n}{2\theta Z_0} \right) + 1-\gamma\right)^2+
O(n^{-1}\log^2(n)). 
\]
So we deduce that:
\begin{align*}
\inv{\beta^2}J_{1,n}+ J_{2,n}-\frac{2}{\beta}J_{3,n}
&=  \left(-\frac{Z_0}{\beta}\log\left(\frac{n}{2\theta Z_0}\right)+
  \frac{Z_0}{\beta}(1-\gamma)\right)^2 + O(n^{-1}\log^2(n))\\
&= \left(-\E[\tilde \Lambda_n|Z_0] + \frac{Z_0}{\beta}(1-\gamma)\right)   ^2 +
  O(n^{-1}\log^2(n)). 
\end{align*}
We deduce that:
\[
\E\left[\E\left[\tilde \Lambda_n|\Delta_n\right]^2\,|Z_0\right]
=\E[\tilde \Lambda_n\,|\, Z_0]^2+   O(n^{-1}\log^2(n) ). 
\]
So in the end, using \reff{eq:L2D}, we get:
\[
\E\left[\tilde \Lambda_n^2\,|\, Z_0\right]
=2 Z_0 \int_0^\infty  hc_\theta(h)\, dh + \E[\tilde \Lambda_n\,|\, Z_0]^2+
O(n^{-1}\log^2 (n)). 
\]
\end{proof}

\subsection{Proof of Theorem \ref{thm:cvLn}}
\label{proof-prop}
We shall keep  notations from Section \ref{sec:settingRF}.
We set  $J_n(\varepsilon)=\E\left[\left(\tilde \Lambda_n-
  \tilde   L_\varepsilon\right)^2 |Z_0\right]$.
We have:
\[
J_n(\varepsilon)=\E[\tilde \Lambda_n^2|Z_0] + \E[\tilde L_\varepsilon^2|Z_0] -
2\E[\tilde \Lambda_n \tilde L_\varepsilon|Z_0]. 
\]
By conditioning with respect to $\Delta_n$, and using the independence,
we get: 
\[
\E[\tilde \Lambda_n \tilde  L_\varepsilon|Z_0]
=\E\left[\E[\tilde \Lambda_n \tilde L_\varepsilon|\Delta_n]|Z_0\right]
=\Sigma_n + \E\left[\E[\tilde \Lambda_n|\Delta_n]\E[
  \tilde L_\varepsilon|\Delta_n]\, \Big|\, Z_0\right]
=\Sigma_n+ \E[\tilde \Lambda_n|Z_0]\E[ \tilde  L_\varepsilon|Z_0],
\]
where we used that $\E[\tilde L_\varepsilon|\Delta_n]=\E[\tilde
L_\varepsilon|Z_0]$ for 
the last equality, and:
\[
\Sigma_n=
 \E\left[\sum_{k=1}^n \E\left[\zeta_{k,n}^* \sum_{z_i\in I_{k,n}} (\zeta_i
    -\varepsilon)_+ \, \Big|\,\Delta_n\right]
- \sum_{k=1}^n \E[\zeta_{k,n}^*  |\Delta_n]
\E\left[\sum_{z_i\in I_{k,n}} (\zeta_i
    -\varepsilon)_+ \,\Big|\, \Delta_n\right]
\, \Big |\, Z_0\right].
\]
So using \reff{eq:Le2} and \reff{eq:ELnD2}, we get:
\[
J_n(\varepsilon)= 4Z_0 \int_0^\infty   hc_\theta(h)\, dh
 - 2\Sigma_n + \left(\E[\tilde \Lambda_n|Z_0]-  \E[\tilde L_\varepsilon|Z_0]\right)^2+
O(\varepsilon\log(\varepsilon))+  O(n^{-1}\log^2(n)).
\]
Then taking $\varepsilon \asymp n^{-1}$, we get, using \reff{eq:Le1}, \reff{eq:ELnZ1} and Lemma
\ref{lem:Sigma} below: 
\[
J_n(\varepsilon)= \frac{Z_0^2}{\beta^2} \log^2\left(n\varepsilon
  \frac{\beta }{Z_0} \right)+ O (n^{-1}\log^2(n)).
\]
We deduce that $\tilde \Lambda_n - \tilde L_{Z_0/(n\beta)}$ converges in
probability to $0$ and, by  Borel-Cantelli lemma almost surely along the
sub-sequence      $n^3$.        Recall      that       the      sequence
$(\tilde L_\varepsilon -  \E[\tilde L_\varepsilon|Z_0], \varepsilon>0) $
converges a.s., as  $\varepsilon$ goes down to $0$, towards  a limit say
$\tilde                \cl$.                  Notice                that
$\E[\tilde      L_{Z_0/n\beta}|Z_0]=\E[\tilde      \Lambda_n|Z_0]      +
O(n^{-1}\log(n))$
and             thus,             we             deduce             that
$(\tilde  \Lambda_{n^3}-   \E[\tilde  \Lambda_{n^3}|Z_0],   n\in  \N^*)$
converges also a.s.~towards $\tilde \cl$.  Then  use \reff{eq:ELnD1} to
get that for $k\in [n^3, (n+1)^3)$:
\[
\tilde \Lambda_{n^3} - \E[\tilde \Lambda_{n^3}|Z_0] + O(n^{-1}\log(n))
\leq  \tilde \Lambda_k - \E[\tilde \Lambda_k|Z_0] 
\leq  \tilde \Lambda_{(n+1)^3} - \E[\tilde \Lambda_{(n+1)^3}|Z_0] + O(n^{-1}\log(n)).
\]
Then conclude that  $(\tilde \Lambda_{n}-  \E[\tilde \Lambda_{n}|Z_0], n\in \N^*)$
converges   also a.s.~towards $\cl$.

\begin{lem}
   \label{lem:Sigma}
Let $\varepsilon \asymp n^{-1}$. We have:
\[
\Sigma_n=  2 Z_0 \int _0^\infty  hc_\theta(h)\, dh +  O(n^{-1}\log^2(n)).
\]
\end{lem}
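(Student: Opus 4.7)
The key observation is that $\Sigma_n$ is nothing but the conditional expectation, given $Z_0$, of the sum over $k$ of the conditional covariances (given $\Delta_n$) of $\zeta_{k,n}^*$ and $Y_{k,n}:=\sum_{z_i\in I_{k,n}}(\zeta_i-\varepsilon)_+$. Conditionally on $\Delta_n$ with $\Delta_{k,n}=\delta$, the restriction of the ancestral process to $I_{k,n}\times\R_+$ is a Poisson point measure with intensity $\mathbf{1}_{I_{k,n}}(z)\,dz\,|c'_\theta(\zeta)|\,d\zeta$, so the pair $(\zeta_{k,n}^*,Y_{k,n})$ has the same joint distribution as $(\zeta^*_\delta,\sum_{i\in I}(\zeta_i-\varepsilon)_+)$ appearing in \reff{eq:zz}. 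Moreover the $n$ intervals are, conditionally on $\Delta_n$, independent in the relevant sense, so summing term by term makes sense.

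Applying \reff{eq:zz} to each term and using \reff{eq:z*1} together with \reff{eq:moment1-zeta} to evaluate the product of the two marginal conditional expectations, I would write
\[
\Sigma_n = 2\left(\int_0^\infty h c_\theta(h)\,dh\right)\E\!\left[\sum_{k=1}^n\Delta_{k,n}\,\Big|\,Z_0\right] + \E\!\left[\sum_{k=1}^n g_3(\Delta_{k,n})\,\Big|\,Z_0\right] - R_n,
\]
where $R_n=\sum_{k=1}^n\E[\E[\zeta^*_\delta]_{|\delta=\Delta_{k,n}}\cdot \Delta_{k,n}\N^{(\theta)}[(\zeta-\varepsilon)_+]\mid Z_0]$. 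For the leading term, use $\sum_{k=1}^n\Delta_{k,n}=Z_0-\Delta_{0,n}-\Delta_{n+1,n}$ and $\E[\Delta_{0,n}+\Delta_{n+1,n}\mid Z_0]=2Z_0/(n+2)$, which produces exactly $2Z_0\int_0^\infty h c_\theta(h)\,dh$ with an $O(n^{-1})$ error.

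The remaining work is to show that the $g_3$-sum and $R_n$ are both $O(n^{-1}\log^2(n))$ when $\varepsilon\asymp n^{-1}$. Conditionally on $Z_0$, each $\Delta_{k,n}$ has the same law as $Z_0\tilde U_n$ with $\tilde U_n\sim\beta(1,n+1)$, hence the moment estimates \reff{eq:EUlog1}--\reff{eq:EUlog} give
\[
\E[\tilde U_n|\log \tilde U_n|]=O(n^{-1}\log n),\qquad \E[\tilde U_n^2|\log \tilde U_n|]=O(n^{-2}\log n).
\]
Plugging the bound $|g_3(\delta)|\leq c\delta^2(1+\delta)(|\log\delta|+1)(|\log\varepsilon|+1)+c\varepsilon\delta(|\log\delta|+1)(1+\delta)+\varepsilon^2$ from Lemma \ref{lem:zeta*} and summing the $n$ identically distributed contributions produces, for $\varepsilon\asymp n^{-1}$, a $O(n\cdot n^{-2}\log n\cdot\log n)=O(n^{-1}\log^2 n)$ contribution from the first piece and strictly smaller contributions from the other two. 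For $R_n$, combine $\E[\zeta^*_\delta]=O(\delta(|\log\delta|+1)(1+\delta))$ from \reff{eq:z*1} with $\N^{(\theta)}[(\zeta-\varepsilon)_+]=O(|\log\varepsilon|)$ from \reff{eq:moment1-zeta}: each summand contributes $O(\Delta_{k,n}^2|\log\Delta_{k,n}|\cdot|\log\varepsilon|)$ which again sums to $O(n^{-1}\log^2 n)$.

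The main technical obstacle is purely bookkeeping of the logarithmic factors: one must verify that the worst offender is indeed of order $n^{-1}\log^2(n)$ (and not, say, $n^{-1}\log^3(n)$) across every piece of $g_3$ and every piece of $R_n$, and in particular check that the choice $\varepsilon\asymp n^{-1}$ is what balances the $\varepsilon$-terms against the $\log\varepsilon$-factors, while no $n^{-1}\log^3(n)$ slips in through the product $(|\log\delta|+1)(|\log\varepsilon|+1)$ once $\delta=Z_0\tilde U_n$ is integrated out using \reff{eq:EUlog}. All other steps are direct applications of the results already established.
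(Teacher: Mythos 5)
Your proposal is correct and follows essentially the same route as the paper: identify $\Sigma_n$ as the expectation of a sum of conditional covariances over the spacings, apply \reff{eq:zz} interval by interval (using that the restricted point process is Poisson with the right intensity), extract the leading term from $\sum_{k=1}^n\Delta_{k,n}=Z_0-\Delta_{0,n}-\Delta_{n+1,n}$, and control the $g_3$-sum and the cross term $R_n$ via the Beta-moment estimates \reff{eq:EUlog1}--\reff{eq:EUlog} with $\varepsilon\asymp n^{-1}$. The bookkeeping you outline matches the paper's and does yield $O(n^{-1}\log^2(n))$ in every piece.
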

\begin{proof}
   We have $\E\left[\sum_{z_i\in I_{k,n}} (\zeta_i
    -\varepsilon)_+ \,\Big|\, \Delta_n\right]= \Delta_{k,n}
  \N[(\zeta-\varepsilon)_+]$. Thanks to \reff{eq:z*1}, \reff{eq:EUlog1} and
  \reff{eq:EUlog}, we get: 
\begin{align*}
  \E\left[\sum_{k=1}^n \Delta_{k,n} \E[\zeta_{k,n}^*  |\Delta_n]
\, \Big |\, Z_0\right] 
&=\frac{nZ_0^2}{\beta} \E\left[\tilde U_n^2 \left(\log(2\theta Z_0 \tilde
  U_n) + (1-\gamma) + g_1(2\theta Z_0 \tilde U_n)\right) \,|Z_0\right]\\
&= O(n^{-2}\log(n)).
\end{align*}
We deduce from \reff{eq:moment1-zeta} with $\varepsilon \asymp n^{-1}$ that:
\[
\E\left[\sum_{k=1}^n \E[\zeta_{k,n}^*  |\Delta_n]
\E\left[\sum_{z_i\in I_{k,n}} (\zeta_i
    -\varepsilon)_+ \,\Big|\, \Delta_n\right]
\, \Big |\, Z_0\right]= O(n^{-1}\log^2(n)).
\]
According to \reff{eq:zz}, we have:
\[
\sum_{k=1}^n \E\left[\zeta_{k,n}^* \sum_{z_i\in I_{k,n}} (\zeta_i
    -\varepsilon)_+ \, \Big|\,\Delta_n\right]
= 2 Z_0 \int _0^\infty  hc_\theta(h)\, dh + W'''_n,
\]
with 
\[
W'''_n=- 2(\Delta_{0,n}+\Delta_{n+1,n})\int _0^\infty  hc_\theta(h)\, dh+
\sum_{k=1}^n g_3(\Delta_{k,n}).
\]
Since $\varepsilon \asymp n^{-1}$, we deduce that 
\[
\E[|W'''_n||Z_0]
\leq \frac{2 Z_0}{n+1} \int _0^\infty  hc_\theta(h)\, dh
+ O(n^{-1}\log^2(n)).
\]
This gives the result. 
\end{proof}

\subsection*{Acknowledgments}
We thank the referees for their valuable comments which improves the
presentation of the paper and broadens  the bibliography.

\bibliographystyle{abbrv}
\bibliography{biblio}

\end{document}